\documentclass[12pt]{article}
\usepackage{amsmath,fullpage,amsthm}
\usepackage{amscd}
\usepackage{amsthm,mathtools} \usepackage{amssymb}
\usepackage{latexsym}
\usepackage{eufrak}
\usepackage{euscript}
\usepackage[width=17.5cm,height=24cm]{geometry}
\usepackage{epsfig}
\usepackage{tikz}
\usepackage{graphics}
\usepackage{array}
\usepackage{enumerate} \usepackage{authblk}

\theoremstyle{theorem}
\newtheorem{theorem}{Theorem}[section]
\theoremstyle{corollary}

\theoremstyle{lemma}
\newtheorem{lemma}{Lemma}[section]
\theoremstyle{definition}

\theoremstyle{proof}
\theoremstyle{remark}
\newtheorem{remark}{Remark}[section]
\theoremstyle{example}

\theoremstyle{observation}

\begin{document}
  \setcounter{Maxaffil}{2}
\title{Further study on forbidden subgraphs of power graph}
   \author[ a]{Santanu Mandal\thanks{santanu.vumath@gmail.com}}
   \author[b ]{Pallabi Manna\thanks{mannapallabimath001@gmail.com}}
   \affil[a ]{School of Computing Science and Engineering,}
   \affil[ ]{Vellore Institute of Technology,}
   \affil[ ]{Bhopal - 466114, India}
   \affil[b ]{Harish Chandra Research Institute,}
   \affil[ ]{Prayagraj - 211019, India}
   \maketitle

\begin{abstract}
The undirected power graph (or simply power graph) of a group $G$, denoted
by $P(G)$, is a graph whose vertices are the elements of the group $G$, in
which two vertices $u$ and $v$ are adjacent if and only
if either $u=v^m$ or $v=u^n$ for some positive integers $m$, $n$.
Forbidden subgraph has a significant role in graph theory. In our previous work \cite{cmm}, we consider five important classes of forbidden subgraphs of power graph which include perfect graphs, cographs, chordal graphs, split graphs and threshold graphs. In this communication, we go even further in that way. This study, inspired by the articles \cite{celmmp,dong,ck},  examines additional $4$ significant forbidden classes, including chain graphs, diamond-free graphs, $\{P_{5}, \overline{P_{5}}\}$-free graphs and $\{P_{2}\cup P_{3}, \overline{P_{2}\cup P_{3}}\}$-free graph. The finite groups whose power graphs are chain graphs, diamond-free graphs, and $\{P_{2}\cup P_{3}, \overline{P_{2}\cup P_{3}}\}$-free graphs have been successfully identified in this work. In case of $\{P_{5}, \overline{P_{5}}\}$-free graphs, we completely determine all the nilpotent groups, direct product of two groups, finite simple groups  whose power graph is $\{P_{5}, \overline{P_{5}}\}$-free. 
\end{abstract}
\textbf{AMS Subject Classification: } 05C25.\\
\textbf{Keywords:} Power graphs, nilpotent groups, direct product, induced subgraphs, chain graphs, diamond graphs. 

   \section{Introduction}
Graphs defined on various algebraic structures like groups, rings, vector spaces  become very popular among the researchers from last few decades. Power graph is one such major graph representation of semigroups, groups. In 2002,  Kelarev and Quinn introduced the directed power graph of semigroups (see \cite{Kelarev}). The directed power graph of a semigroup $S$, denoted by $\overrightarrow{P}(S)$), is a graph whose vertex set is $S$ and there is an arc $u\rightarrow v$ (where, $u\neq v$) if $v=u^{m}$ for some positive integer $m$. The corresponding underlying graph is called the undirected power graph, which is denoted by $P(S)$. Chakrabarty et al. \cite{Chakrabarty} introduced the idea of this graph in 2009. Throughout the paper we consider the power graph means the undirected power graph of finite group. In particular if we remove the identity element of the group $G$ from its original power graph $P(G)$ then the remaining graph is known as the reduced power graph or a proper power graph which is denoted by the symbol $P^*(G)$. In \cite{Doostabadi}, the authors introduced the proper power graph of a group. For more existing results regarding power graph we refer the articles \cite{Abawajy, Chakrabarty}. \\
Forbidden subgraph has an extensive role in graph theory. There are several graph classes that can be represent in terms of forbidden subgraphs. In \cite{Br}, Brandst et al. discussed about various graph classes which is represented by the forbidden subgraphs. A graph is said to be $H$-free if it does not contain $H$ as its induced subgraph. In graph theory there are plenty of research articles are available in which the researchers deal with any NP-complete problem or any structural properties of a particular type of forbidden subgraph class. In this direction we refer few of such  articles, namely \cite{celmmp, dong, am, ck, ck1, ck2, hk, ckmm}.\\
In our previous work \cite{cmm}, we consider several classes of forbidden subgraphs like perfect graphs, cographs, chordal graphs, split graphs and threshold graphs. Moreover in \cite{Mehatari}, we discussed about the direct product of two groups, simple groups of Lie type whose power graph is a cograph.\\
Motivated by the above mentioned articles we consider the graph classes like chain graph, $\{P_{5}, \overline{P_{5}}\}$-free graph, $\{P_{2}\cup P_{3}, \overline{P_{2}\cup P_{3}}\}$-free graph and diamond free graph in case of power graph of finite groups. These graph classes are one of the important forbidden graph class because the class $\{P_{5}, \overline{P_{5}}\}$-free is a very large class of graph that contains cographs, even-hole (of length more than $4$) free graphs, odd-hole (of length more than $5$) free graphs, threshold graphs, complete graphs etc. On the other hand, the classes like complete graphs, complete bipartite graphs, cluster graphs, perfect graphs, even-hole (of length more than $6$) etc. are the subclass of a $\{P_{2}\cup P_{3}, \overline{P_{2}\cup P_{3}}\}$-free graph. Furthermore, this approach provides the benefits for handling several open NP-complete problems in the context of power graphs. The problems with power graphs of any arbitrary finite group, such as clique number, chromatic number, Hamiltonicity, clique-width, graph partition problem, etc. that are difficult to solve, in such cases, by taking into consideration some forbidden subgraph classes of power graphs, we can at least partially come to a conclusion regarding these problems.\\
The paper is organized following this manner: in section $2$ we recall some basic definitions, theorems which we use in this study and the notainal conventions of this paper. In section $3$ we conclude that the proper power graph of a finite group $G$ is a chain graph if and only if $G$ is either a) $C_{3}$ or b) a $2$-group of exponent $2$ or c) a EPO group $C_{3}\rtimes P$, where $P$ is a non-cyclic $2$-group of exponent $2$ or d) $S_{3}$. A group is called EPPO if every non-identity elements are of prime power order; whereas if every non-identity element of a group are of prime order then it is called an EPO group.\\
Section $4$ is devoted to  $\{P_{5}, \overline{P_{5}}\}$-free graph. Here we conclude the necessary and sufficient condition of a $\{P_{5}, \overline{P_{5}}\}$-free graph in case of nilpotent groups and direct product of two groups and obtain the following results.
\begin{theorem}
\label{th_p5_1}
Let $G$ be a finite nilpotent group. Then $P(G)$ is $\{P_{5}, \overline{P_{5}}\}$-free if and only if $G$ is either a) a $p$ group or b) a cyclic group $C_{p^{a}q}$, where $p, q$ are distinct primes and $a \geq 1$.
\end{theorem}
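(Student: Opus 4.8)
The plan is to reduce the whole problem to a statement about the comparability graph of the poset of cyclic subgroups. Recall that $u\sim v$ in $P(G)$ precisely when $\langle u\rangle\subseteq\langle v\rangle$ or $\langle v\rangle\subseteq\langle u\rangle$, so all generators of a fixed cyclic subgroup $H$ are pairwise adjacent and share the same neighbourhood: they form a clique of \emph{true twins} of size $\varphi(|H|)$. Hence $P(G)$ is obtained from the comparability graph $\mathcal{C}(G)$ of the cyclic-subgroup poset of $G$ by substituting a clique for each vertex. Since neither $P_5$ nor $\overline{P_5}$ contains two vertices with equal closed neighbourhood (a direct check of the five closed neighbourhoods), such a clique-substitution neither creates nor destroys an induced $P_5$ or $\overline{P_5}$, so $P(G)$ is $\{P_5,\overline{P_5}\}$-free iff $\mathcal{C}(G)$ is. Because $G$ is nilpotent, $G=P_1\times\cdots\times P_k$ with the $P_i$ its Sylow subgroups, and a cyclic subgroup of $G$ is exactly a product $A_1\times\cdots\times A_k$ of cyclic subgroups $A_i\le P_i$; consequently $\mathcal{C}(G)$ is the comparability graph of the product poset $\prod_i \mathcal{C}(P_i)$.

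For the \emph{if} direction I would first treat a $p$-group. Every cyclic subgroup of a $p$-group is a cyclic $p$-group, whose subgroups form a chain, and I would use this to show $\mathcal{C}(G)$ has no induced $P_4$: in a putative induced $A-B-C-D$ the vertex $B$ is comparable to the incomparable pair $A,C$, and it can be neither between them nor above both (two subgroups of the cyclic $B$ would be comparable), forcing $B\subsetneq A$ and $B\subsetneq C$; the same reasoning applied to $C$ gives $C\subsetneq B$, a contradiction. Hence $\mathcal{C}(G)$, and therefore $P(G)$, is a cograph, in particular $\{P_5,\overline{P_5}\}$-free. For $G=C_{p^aq}$ the poset is the product of the chain $1\subset C_p\subset\cdots\subset C_{p^a}$ with the two-element chain $1\subset C_q$. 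I would show that the complement of its comparability graph is the bipartite \emph{half graph} in which $(C_{p^i},1)$ is joined to $(C_{p^j},C_q)$ iff $i>j$: being bipartite it is triangle-free, hence has no induced $\overline{P_5}$ (which contains a triangle), and its neighbourhoods are linearly nested, hence it is $2K_2$-free and has no induced $P_5$ (which contains $2K_2$). Complementing and lifting through the clique-substitution gives that $P(C_{p^aq})$ is $\{P_5,\overline{P_5}\}$-free for every $a\ge1$.

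For the \emph{only if} direction I argue contrapositively, noting that $\mathcal{C}(G)$ is an induced subgraph of $P(G)$ (pick one generator per cyclic subgroup), so any induced $P_5$ located in a convenient subposet transfers to $P(G)$. If $G$ has three prime divisors $p,q,r$ it contains $C_{pqr}$, and the subgroups of orders $p,pq,q,qr,r$ induce the path $p-pq-q-qr-r$. So $G$ has at most two prime divisors; the $p$-group case is already allowed, so assume $G=P\times Q$ with both factors nontrivial. If some Sylow factor, say $P$, is non-cyclic, then $\mathcal{C}(P)$ is not a chain and has incomparable cyclic subgroups $U,V$; taking a subgroup $1\subsetneq W\le Q$ of order $q$, the five cyclic subgroups $(U,1),(U,W),(1,W),(V,W),(V,1)$ induce a $P_5$, since every non-consecutive pair is incomparable (either because $U,V$ are, or because the two coordinates move oppositely). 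Hence $G$ must be cyclic, $G=C_{p^aq^b}$; and if $a,b\ge2$ the nine divisors $p^iq^j$ ($0\le i,j\le2$) form a $3\times3$ grid poset in which $p^2-p^2q-pq-pq^2-q^2$ is an induced $P_5$. Therefore $\min(a,b)=1$ and $G\cong C_{p^aq}$.

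The main obstacle is the reduction lemma together with the $p$-group step: one must verify carefully that the clique-substitution viewpoint is exact (including the dominating identity vertex, which lies in no induced $P_5$ or $\overline{P_5}$ since those graphs have maximum degree below $4$) and that $P_5,\overline{P_5}$ are genuinely twin-free, and then extract the clean poset argument showing $\mathcal{C}(G)$ is $P_4$-free for \emph{all} $p$-groups. A pleasant feature of phrasing everything through incomparable \emph{cyclic subgroups}, rather than through two subgroups of order $p$, is that the troublesome generalized quaternion Sylow subgroups---which have a unique involution---need no separate treatment, since they are still non-cyclic and hence supply the incomparable pair $U,V$. The remaining work is the routine but slightly tedious verification that each exhibited five-vertex configuration is induced, that is, that all the required non-adjacencies really correspond to incomparable subgroups.
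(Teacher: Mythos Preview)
Your argument is correct and follows a genuinely different route from the paper.  The paper first isolates a $P_5$-free characterization (its Theorem~\ref{th_p5}) by direct element-level constructions, then deduces the $\{P_5,\overline{P_5}\}$-free statement by quoting the earlier result (Theorem~\ref{pre_th_1}) that $P(G)$ is a cograph for $p$-groups and by an ad~hoc exclusion of $\overline{P_5}$ in $P(C_{p^aq})$.  You instead pass uniformly through the comparability graph $\mathcal{C}(G)$ of the cyclic-subgroup poset via the true-twin/clique-substitution reduction: the $p$-group case becomes a short poset argument (two subgroups of a cyclic $p$-group are always comparable, forcing $P_4$-freeness), and the $C_{p^aq}$ case reduces to the observation that the incomparability graph is a bipartite half-graph, hence simultaneously triangle-free and $2K_2$-free.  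What your approach buys is self-containment---no appeal to the external cograph theorem---and robustness: by working with incomparable \emph{cyclic subgroups} rather than two elements of order $p$, you avoid the small wrinkle in the paper's Claim~2 of Theorem~\ref{th_p5}, where ``there exist elements $a,b$ of order $p$ with $a\nsim b$'' is asserted for any non-cyclic Sylow $p$-subgroup even though a generalized quaternion $2$-group has a unique involution (the conclusion there is still correct, using elements of order~$4$ instead).  The paper's approach, in turn, is shorter once one is willing to cite the prior cograph result, and its explicit $P_5$'s are written at the element level, which some readers may find more tangible.
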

\begin{theorem}
\label{th_p5_direct_product}
Let $G, H$ be two finite groups. Then $P(G\times H)$ is $\{P_{5}, \overline{P_{5}}\}$-free if and only if $G, H$ take one of the following forms:\\
a) both $G, H$ are power of same prime;\\
b) one of $G, H$ is the cyclic group $C_{p^k}$ and the other one is $C_{q}$, where $p, q$ are distinct primes;\\
c) one of $G, H$ is a cyclic group $C_{q^m}$ then we have the other is the group (i) or (ii).\\
i) $C_{p^r}\rtimes Q$ ($r \geq 1$) if $m=1$.\\
ii) $C_{p}\times Q$ if $m>1$.\\
Provided $p, q$ are distinct prime divisors of $o(G\times H)$ and $Q$ is Sylow $q$-subgroup of $H$.
\end{theorem}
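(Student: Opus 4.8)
The plan is to prove both implications, disposing first of sufficiency (which largely reduces to Theorem \ref{th_p5_1}) and then attacking the substantive necessity direction. Throughout I would use the elementary description of adjacency in a direct product: $(g_1,h_1)$ and $(g_2,h_2)$ are adjacent in $P(G\times H)$ iff one is a power of the other, i.e. there is a \emph{single} exponent $k$ with $(g_2,h_2)=(g_1^k,h_1^k)$ (or conversely); equivalently $u\sim v$ iff $\langle u\rangle,\langle v\rangle$ are comparable under inclusion. In particular, for nontrivial $g,h$ the pure elements $(g,1)$ and $(1,h)$ are never adjacent, and $P(K)$ embeds as an induced subgraph for every subgroup $K\le G\times H$. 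For sufficiency, case (a) gives a $p$-group $G\times H$, so Theorem \ref{th_p5_1} applies; case (b) gives $C_{p^k}\times C_q\cong C_{p^kq}$, which is exactly the cyclic family $C_{p^aq}$ of Theorem \ref{th_p5_1}; the nilpotent instances of (c) are handled identically. The only genuinely new verification is the non-nilpotent family $C_q\times(C_{p^r}\rtimes Q)$ of (c)(i), which I would analyse directly as in the model computation for $C_2\times S_3$: the identity dominates, the remaining prime-order elements each lie in at most one maximal cyclic subgroup, and the non-dominating part collapses to a clique (generators of the cyclic subgroups of order $pq$ together with their order-$p$ and order-$q$ powers) with a few low-degree attachments, after which one checks that no $5$-subset realises $P_5$ (anything connected enough and missing the identity is essentially a clique) nor the degree sequence $(3,3,2,2,2)$ of $\overline{P_5}$.

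\textbf{Necessity, bounding the primes.} First I would show $|G\times H|$ has at most two prime divisors. If three distinct primes occur then, after swapping the factors if needed, two of them $p,q$ divide $|G|$ and a third $r$ divides $|H|$; picking $g_p,g_q\in G$ of orders $p,q$ and $h_r\in H$ of order $r$, the five vertices
\[(g_p,1),\ (g_p,h_r),\ (1,h_r),\ (g_q,h_r),\ (g_q,1)\]
induce a $P_5$: each branch is an adjacency obtained from the Chinese Remainder Theorem (one coordinate is killed, the other preserved, since $p,q,r$ are pairwise coprime), while all remaining pairs are incomparable. Hence at most two primes occur, and the one-prime case is precisely (a) by Theorem \ref{th_p5_1}.

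\textbf{Necessity, the two-prime core.} Assume now exactly two primes $p,q$. I would combine two tools. The \emph{subgroup principle}: since $P(K)$ embeds for every $K\le G\times H$, Theorem \ref{th_p5_1} forbids any nilpotent subgroup that is neither a $p$-group nor of the form $C_{p^aq}$; thus $G\times H$ contains no $C_p\times C_p\times C_q$ (nor its $p\leftrightarrow q$ mirror) and no cyclic $C_{p^2q^2}$. The \emph{path gadget} of the same shape as above: if one factor has an element $x$ of order $q$ and the other has two incomparable elements $y_1,y_2$ of order $p$, then
\[(1,y_1),\ (x,y_1),\ (x,1),\ (x,y_2),\ (1,y_2)\]
induce a $P_5$ (this is what kills, e.g., $S_3\times S_3$, where the $q$-element comes from one factor and the two incomparable involutions from the other). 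Together these force every Sylow subgroup carrying both primes to have a unique subgroup of prime order, hence to be cyclic or generalized quaternion, and they bar both factors from carrying the same prime too richly; feeding this back into the subgroup principle is intended to leave exactly the configurations of (b) and (c).

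\textbf{Main obstacle.} I expect the crux to be the two-prime, non-nilpotent analysis. Theorem \ref{th_p5_1} is silent on non-nilpotent subgroups, so separating the admissible semidirect products $C_{p^r}\rtimes Q$ of (c)(i) from all other non-nilpotent two-prime groups cannot be done by the subgroup principle alone; it requires hand-built $P_5$ and $\overline{P_5}$ gadgets that are sensitive to the action, together with careful bookkeeping of which elements have mixed order $pq$. Balancing the coexistence of a prime carried by both factors (which produces incomparable cyclic subgroups lying across the two coordinates) against the cyclic-or-quaternion Sylow constraint, and then pinning down precisely which $Q$ and which actions survive, is where the classification will be won or lost.
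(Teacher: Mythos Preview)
Your skeleton matches the paper's: bound to at most two primes via exactly the $P_5$ gadget you wrote down, then exploit Theorem~\ref{th_p5_1} on nilpotent subgroups of $G\times H$. Where your plan stays vague, however, the paper has two concrete devices that you are missing.

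\emph{Necessity with two primes.} You flag this as the crux but do not supply the key move. When $p$ and $q$ both divide $|G|$ and $|H|$, the paper takes Sylow subgroups $P_G,Q_G\le G$ and $P_H,Q_H\le H$ and looks at the \emph{cross} products $P_G\times Q_H$ and $Q_G\times P_H$. Each is nilpotent, so Theorem~\ref{th_p5_1} forces (up to symmetry) $P_G\cong C_{p^a}$, $Q_H\cong C_q$, and similarly for the other pair. A second application shows that neither $G$ nor $H$ can contain an abelian subgroup $M$ of order $pq$, since $M$ times a Sylow of the other factor is again a forbidden nilpotent subgroup. Both $G$ and $H$ are then non-nilpotent with cyclic Sylows, and a short normality argument (if $Q_G$ is not normal then $P_H$ must be normal, else your path gadget fires; iterate) pins them down to $C_{p^a}\rtimes C_q$-type shapes, which the same gadget then kills. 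The conclusion is that one factor has prime-power order; one more cross-pairing with that factor finishes the classification. Your ``subgroup principle plus path gadget'' is the right toolkit, but without the systematic cross-Sylow pairing you will not close this case.

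\emph{Sufficiency of (c).} Your $C_2\times S_3$ model does not scale: for general $C_{q^m}\times(C_{p^r}\rtimes Q)$ the reduced graph is not simply ``a clique with a few attachments'', and a degree-sequence check for $\overline{P_5}$ is not workable. The paper instead assumes a $P_5$ exists, writes the middle vertex as $x_3=(g_3,h_3)$, and splits into cases according to whether $g_3$ or $h_3$ is trivial and whether each has $p$-power or $q$-power order; in every case one of the forbidden chords $x_2\sim x_4$ or $x_1\sim x_3$ appears, using only that $G$ is cyclic and that $H$ has no element of order divisible by $pq$. For $\overline{P_5}$ it suffices to exclude an induced $C_4$, and the same case analysis on one vertex of the putative $4$-cycle does this. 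Aim for this middle-vertex case split rather than a global structural description.
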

We classify the low dimensional simple groups of Lie type whose power graph is  $\{P_{5}, \overline{P_{5}}\}$-free. We obtain the following result:
\begin{theorem}
Let $G$ be a finite simple group of Lie type except the Ree group ${}^{2}{G_{2}}(q)$ (where, $q=3^{2e+1}$). Then $P(G)$ is $\{P_{5}, \overline{P_{5}}\}$-free if and only if either of the followings hold:\\
I) $G\cong A_{n}$ with $n \leq 6$;\\
II) $G\cong PSL(2, q)$ such that conditions a) or b) occurs:
a) the numbers $(q\pm 1)/2$ are either a prime or product of some prime and a prime power if $q$ odd;\\
b) $q\pm 1$ are either a prime or product of some prime and a prime power if $q$ even;\\
III) $G={}^{2}{B_{2}}(q)=Sz(q)$, where $q=2^{2e+1}$ with the numbers $q-1, q\pm\sqrt{2q}+1$ are  either a prime or product of some prime and a prime power;\\
IV) $G\cong PSL(3, 4)$.
\end{theorem}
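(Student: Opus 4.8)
\section*{Proof proposal}

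The plan is to combine two reductions with a family-by-family sweep through the classification of low-dimensional simple groups of Lie type. First I would record that in every power graph $P(G)$ the identity is adjacent to all other vertices, whereas neither $P_{5}$ nor $\overline{P_{5}}$ has a universal vertex; hence any induced copy of $P_{5}$ or $\overline{P_{5}}$ must avoid the identity, and $P(G)$ is $\{P_{5},\overline{P_{5}}\}$-free if and only if the proper power graph $P^{*}(G)$ is. Second, for any $H\le G$ the graph $P(H)$ is an induced subgraph of $P(G)$, since the power relation between two elements of $H$ is computed inside $H$. Applying this to each cyclic subgroup $\langle g\rangle\cong C_{n}$ and invoking Theorem~\ref{th_p5_1} (the case of the nilpotent group $C_{n}$) gives the governing \emph{necessary condition}: if $P(G)$ is $\{P_{5},\overline{P_{5}}\}$-free, then every element order of $G$ is of the form $p^{a}$ or $p^{a}q$ for primes $p,q$; in particular no element order is divisible by three distinct primes, nor by $p^{2}q^{2}$.

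Next I would import the known spectra and maximal-torus data for the simple groups of Lie type, together with the crucial structural fact, valid for $PSL(2,q)$ and $Sz(q)$, that the unipotent Sylow subgroup and the cyclic maximal tori form a trivial-intersection (partition) family. When this holds, every non-identity element lies in a unique such subgroup, so $P^{*}(G)$ is the disjoint union of the graphs $P^{*}(S)$ as $S$ ranges over these subgroups. Since $P_{5}$ and $\overline{P_{5}}$ are both connected, any induced copy lies inside a single component; hence $P^{*}(G)$ is $\{P_{5},\overline{P_{5}}\}$-free exactly when each $P^{*}(S)$ is. The Sylow-$p$ part is automatically admissible (it is a $p$-group, so Theorem~\ref{th_p5_1} applies), and each cyclic torus $C_{m}$ is admissible precisely when $m$ is $p^{a}$ or $p^{a}q$. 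For $PSL(2,q)$ the torus orders are $p$ together with $(q\pm1)/2$ (odd $q$) or $q\pm1$ (even $q$), reproducing conditions II(a) and II(b); for $Sz(q)$ they are a divisor of $4$ together with $q-1$ and $q\pm\sqrt{2q}+1$, giving condition III. The exceptional isomorphisms $A_{5}\cong PSL(2,4)\cong PSL(2,5)$ and $A_{6}\cong PSL(2,9)$ place the small alternating groups of case~I inside this same analysis.

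It then remains to rule out every other family and to isolate $PSL(3,4)$. Most families fail the element-order test outright: a Singer or product-type torus supplies an element whose order is divisible by three primes or by $p^{2}q^{2}$, and groups of higher Lie rank or dimension ($PSL(n,q)$ with $n\ge4$, the unitary, symplectic, orthogonal, and exceptional types) are eliminated this way, often more cheaply by exhibiting a section already excluded above. The delicate part is $PSL(3,q)$: here the torus orders $(q^{2}-1)/\gcd(3,q-1)$ and $(q^{2}+q+1)/\gcd(3,q-1)$ can remain of admissible shape, so the element-order test alone does not decide the question. I would single out $q=4$, where the spectrum collapses to $\{1,2,3,4,5,7\}$, consisting only of prime powers; a short direct check then shows $P^{*}(PSL(3,4))$ splits into components that are unions of cliques and of friendship-type graphs, all $\{P_{5},\overline{P_{5}}\}$-free, yielding case~IV. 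For the remaining $PSL(3,q)$ (for example $PSL(3,3)$), the presence of an element of genuinely composite order $p^{a}q$ whose prime-power powers are shared with other cyclic subgroups destroys the trivial-intersection behaviour, and one must produce an explicit induced $P_{5}$ or $\overline{P_{5}}$ from these overlapping subgroups.

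The main obstacle is precisely this last point. Outside $PSL(2,q)$ and $Sz(q)$ the clean component decomposition is unavailable, and because each cyclic $p$-subgroup is itself a clique in the power graph one cannot naively route an induced path through a shared power without creating an unwanted long-range edge; the forbidden configuration must be assembled with careful control of which commuting products and shared involutions actually occur in the group. A secondary difficulty is the Diophantine bookkeeping underlying conditions II and III, namely deciding when $(q\pm1)/2$, $q\pm1$, or $q\pm\sqrt{2q}+1$ is a prime or a prime times a prime power. It is the analogous but more intractable analysis for the Ree groups ${}^{2}G_{2}(q)$, whose element orders involve the awkward factors $q\pm\sqrt{3q}+1$, that resists a clean resolution and is therefore excluded from the statement.
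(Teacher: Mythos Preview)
Your plan matches the paper's architecture: the same reduction to $P^{*}(G)$, the same hereditary principle, the trivial-intersection decomposition for $PSL(2,q)$ and $Sz(q)$ yielding exactly II and III via Theorem~\ref{th_p5_1}, the EPPO argument isolating $PSL(3,4)$, and then a family-by-family elimination of everything else using sections. So the skeleton is correct.

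The genuine gap is in the elimination step for the rank-$2$ families. You assert that ``most families fail the element-order test outright'' and that higher-rank groups are handled ``often more cheaply by exhibiting a section already excluded above''. But the rank-$2$ groups $PSL(3,q)$, $PSU(3,q)$, $PSp(4,q)$, $G_{2}(q)$ for small $q$ do \emph{not} fail the element-order test: for instance $PSL(3,3)$, $PSU(3,3)$, $PSp(4,3)$ all have spectrum consisting entirely of numbers of the form $p^{a}$ or $p^{a}q$, so nothing you have written excludes them. You correctly flag this as ``the main obstacle'' for $PSL(3,q)$, but the same obstacle is present for $PSU(3,q)$ and (indirectly, via its sections) $PSp(4,q)$ and $G_{2}(q)$, and you do not propose a mechanism to overcome it. The paper resolves this not by an element-order argument but by writing down, in each of $PSL(3,q)$ and $PSU(3,q)$, three explicit matrices $g,h,k$ (or $g,h,x$) whose orders and power relations are chosen so that five of their images in the quotient form an induced $P_{5}$; these constructions are the actual content of the elimination, and your proposal stops exactly where they begin. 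The remaining rank-$2$ and higher-rank cases are then disposed of in the paper by exhibiting one of $PSL(3,q)$, $PSU(3,q)$, $PSp(4,q)$, $S_{6}$, or $A_{7}$ as a section, so once the matrix constructions are in hand the rest of your sweep goes through as you describe.
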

Additionally, we show that there is no sporadic simple groups whose power graph is  $\{P_{5}, \overline{P_{5}}\}$-free. \\
In section $5$ we find a necessay and sufficient condition for a nilpotent group as well as a non-nilpotent group whose power graph is $\{P_{2}\cup P_{3}, \overline{P_{2}\cup P_{3}}\}$-free. And the final section i.e., section $6$ determines the finite groups having diamond-free power graph.

\section{Preliminaries}
We use the notations $K_{n}, P_{n}, C_{n}, 2K_{2}, \overline{\Gamma}, \Gamma_{1}\cup \Gamma_{2}$ to indicate a complete graph of order $n$, a path on $n$-vertices, a cycle of length $n$, the complement of $C_{4}$, complement of the graph $\Gamma$, disjoint unions of two graphs $\Gamma_{1}$ and $\Gamma_{2}$  respectively. From group theory we use the standard notations like $o(G), o(a), C_{n}, S_{n}, A_{n}, G\times H$ and $G\rtimes H$ to mean the order of the group $G$, the order of the element $a$ of a group, a cyclic group of order $n$, a symmetric group on $n$-symbols, an alternating group on $n$-symbols, the direct product of two groups $G, H$ and the semi-direct product of $G, H$. We use the same notation $C_{n}$ for both the cyclic group of order $n$ and a cycle of length. This will be clear from the context which we intend. The notation $\pi(G)$ stands for the set of all distinct prime divisors of $o(G)$, and $|\pi(G)|$ is the cardinality of $\pi(G)$.\\
We now want to recall the definition of nilpotent group. A group is nilpotent if it is the direct products of its Sylow subgroups.
Power graph has one important property that for a given group $G$, the power graph of any subgroup of $G$ is an induced subgraph of $P(G)$. This property helps us to determine a group whose power graph is whether lies in the classes of graph considered in this paper.\\
In \cite{cmm}, we completely characterized finite nilpotent power-cograph
groups. We proved the following theorem:
   \begin{theorem}[\cite{cmm}, Theorem 3.2]
\label{pre_th_1}
Let $G$ be a finite nilpotent group. Then $P(G)$ is a cograph if and only if
either $|G|$ is a prime power, or $G$ is cyclic of order $pq$ for distinct
primes $p$ and $q$.
\end{theorem}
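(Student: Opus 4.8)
The plan is to use the standard reformulation that a graph is a cograph precisely when it is $P_{4}$-free, together with the basic adjacency criterion for power graphs: two vertices $u,v$ are adjacent in $P(G)$ if and only if $\langle u\rangle\subseteq\langle v\rangle$ or $\langle v\rangle\subseteq\langle u\rangle$, i.e. the cyclic subgroups they generate are comparable under inclusion. Thus the whole problem translates into a question about comparability of cyclic subgroups, and an induced $P_{4}$ on vertices $a-b-c-d$ corresponds to cyclic subgroups $A,B,C,D$ with $A,B$ comparable, $B,C$ comparable, $C,D$ comparable, but $A,C$, $B,D$ and $A,D$ pairwise incomparable. I would prove the two implications separately.

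For sufficiency I would first treat the prime-power case. The key fact is that the subgroups of a cyclic $p$-group form a chain; hence in a $p$-group any two cyclic subgroups that are both contained in a common cyclic subgroup are automatically comparable. Feeding this into the $P_{4}$ pattern above, I would show that the middle vertex $B$ forces $B\subseteq A$ and $B\subseteq C$ (the other sign choices immediately produce a comparability among $A$ and $C$), and then the vertex $C$ forces $B$ and $D$ to lie in the common cyclic subgroup $C$, making them comparable --- contradicting $B\not\sim D$. This rules out any induced $P_{4}$, so $P(G)$ is a cograph for every $p$-group. For the cyclic group $C_{pq}$ I would argue directly from its subgroup lattice: the identity and the $\varphi(pq)$ generators are universal vertices, while the remaining elements split into the clique coming from the unique subgroup $C_{p}$ and the clique coming from $C_{q}$, which are non-adjacent to one another; hence $P(C_{pq})$ is the join of a clique with $K_{p-1}\cup K_{q-1}$, manifestly a cograph.

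For necessity, suppose $G$ is nilpotent, $P(G)$ is a cograph, and $|G|$ is not a prime power; I must show $G\cong C_{pq}$. Writing $G$ as the direct product of its Sylow subgroups, I would use that the power graph of any subgroup is an induced subgraph of $P(G)$, so it suffices to exhibit induced copies of $P_{4}$ in the forbidden configurations. Reading off the explicit path in the subgroup lattice shows that both $P(C_{p^{2}q})$ and $P(C_{pqr})$ contain an induced $P_{4}$. Nilpotency lets me multiply commuting elements of coprime prime-power orders coming from different Sylow subgroups: Cauchy's theorem then produces an element of order $pqr$ whenever three primes divide $|G|$, forcing at most (hence exactly) two prime divisors $p,q$; and an element of order $p^{2}q$ whenever some Sylow subgroup contains an element of order $p^{2}$, forcing each Sylow subgroup to have prime exponent. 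Finally, to rule out a non-cyclic Sylow subgroup I would note that a $p$-group of exponent $p$ and order exceeding $p$ contains a copy of $C_{p}\times C_{p}$, and then exhibit an induced $P_{4}$ inside $C_{p}\times C_{p}\times C_{q}=\langle a\rangle\times\langle b\rangle\times\langle c\rangle$ explicitly, for instance the path $b-bc-c-ac$. Hence each Sylow subgroup has order equal to its prime, and $G\cong C_{p}\times C_{q}\cong C_{pq}$.

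The main obstacle, I expect, is the necessity direction, and specifically the correct bookkeeping of which small subgroups force an induced $P_{4}$: one must verify the explicit $P_{4}$'s in $C_{p^{2}q}$, $C_{pqr}$ and $C_{p}\times C_{p}\times C_{q}$, and must use nilpotency in an essential way (to guarantee commuting elements of coprime order and therefore elements of the prescribed composite orders), since for non-nilpotent groups coprime-order elements need not commute and the statement genuinely fails. The sufficiency direction is comparatively clean once the chain property of cyclic $p$-groups is isolated, which is precisely the place where the prime-power hypothesis --- as opposed to a general order $n$ --- enters.
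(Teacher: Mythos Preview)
This theorem is not proved in the present paper: it appears in the Preliminaries section as a citation of Theorem~3.2 from \cite{cmm}, and no argument is given here. Consequently there is no proof in this paper to compare your proposal against.

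That said, your proposal is a correct and self-contained proof of the cited result. The sufficiency argument for $p$-groups via the chain property of cyclic $p$-subgroups is sound (the case analysis on the containment directions in a putative $P_{4}$ works exactly as you sketch), and the description of $P(C_{pq})$ as a join of a clique with $K_{p-1}\cup K_{q-1}$ is accurate. For necessity, your three explicit $P_{4}$'s in $P(C_{pqr})$, $P(C_{p^{2}q})$, and $P(C_{p}\times C_{p}\times C_{q})$ are all valid induced paths, and nilpotency is invoked exactly where it is needed, namely to manufacture commuting elements of coprime order and hence the required cyclic or abelian subgroups. One minor wording issue: in the $p$-group case you write that ``the other sign choices immediately produce a comparability among $A$ and $C$''; in fact the choice $A\subseteq B$, $C\subseteq B$ also uses the chain property (both $A$ and $C$ sit inside the cyclic $p$-group $B$), not just transitivity, so you might make that explicit.
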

For a given group $G$ its prime graph is the graph whose vertex set is the distinct prime divisors of $o(G)$ and there is an edge between any two distinct primes if $G$ has an element of order product of these two distinct primes. Earlier (see \cite{cmm}) we proved that:
\begin{theorem}
\label{pre_th_2}
Let $G$ be a group whose prime graph is a null graph. Then $P(G)$ is a
cograph.
\end{theorem}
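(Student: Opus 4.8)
The plan is to translate the hypothesis into a purely group-theoretic condition and then show directly that $P(G)$ contains no induced $P_4$, which is the defining forbidden subgraph of a cograph. First I would observe that the prime graph of $G$ being null is equivalent to $G$ being an EPPO group, i.e. every element of $G$ has prime-power order. Indeed, if some $x\in G$ had order divisible by two distinct primes $p,q$, then a suitable power of $x$ would have order $pq$, producing an edge in the prime graph; conversely an element of order $pq$ is already such an element. Hence nullity of the prime graph says exactly that no element, and so no cyclic subgroup, has order divisible by two distinct primes.

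Next I would record the standard reformulation of adjacency: from the definition, two vertices $u,v$ are adjacent in $P(G)$ if and only if $\langle u\rangle\subseteq\langle v\rangle$ or $\langle v\rangle\subseteq\langle u\rangle$, that is, their cyclic subgroups are comparable under inclusion. In particular the identity is adjacent to every vertex, so it cannot occur in any induced $P_4$; it therefore suffices to rule out an induced $P_4$ among the non-identity vertices. Two consequences of the EPPO hypothesis that I would use are: (i) each non-identity $x$ generates a cyclic $p$-group $\langle x\rangle$ for some prime $p$, whose subgroup lattice is a chain; and hence (ii) if two cyclic subgroups both lie inside a single $\langle x\rangle$, they are automatically comparable.

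Then I would argue by contradiction. Suppose $a-b-c-d$ is an induced $P_4$, so $a\sim b$, $b\sim c$, $c\sim d$ while the three remaining pairs are non-adjacent. Examining the neighbours $a,c$ of $b$: the configuration with both $\langle a\rangle,\langle c\rangle\subseteq\langle b\rangle$ is impossible, since (ii) would force $a\sim c$; the configurations $\langle a\rangle\subseteq\langle b\rangle\subseteq\langle c\rangle$ and $\langle c\rangle\subseteq\langle b\rangle\subseteq\langle a\rangle$ are excluded because each would give $a\sim c$; hence the only survivor is $\langle b\rangle\subseteq\langle a\rangle$ and $\langle b\rangle\subseteq\langle c\rangle$. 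Applying the identical reasoning to the neighbours $b,d$ of $c$ yields $\langle c\rangle\subseteq\langle b\rangle$ and $\langle c\rangle\subseteq\langle d\rangle$. Combining gives $\langle b\rangle=\langle c\rangle$, whence $\langle b\rangle=\langle c\rangle\subseteq\langle d\rangle$ forces $b\sim d$, contradicting $b\not\sim d$. This contradiction shows $P(G)$ is $P_4$-free, i.e. a cograph.

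I expect the only delicate point to be the case analysis in the last paragraph, where the chain structure of the subgroup lattice of a cyclic $p$-group (consequence (ii)) must be invoked at exactly the right moment to discard the non-path configurations. Once the adjacency-via-comparability reformulation and consequence (ii) are in place, the remainder is a short lattice manipulation; the EPPO reduction of the hypothesis and the universality of the identity are the routine supporting steps.
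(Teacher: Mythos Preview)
Your argument is correct. In this paper the theorem is only quoted from \cite{cmm}; the sole justification offered here is the informal remark that in an EPPO group any two adjacent vertices lie in a common cyclic subgroup of prime-power order, ``so $P(G)$ contains neither an induced path of length $2$ and above nor any induced cycle of length more than $3$.'' Taken literally that remark overclaims: for instance in $Q_8$ one has the induced $P_3$ given by $i\sim(-1)\sim j$ with $i\nsim j$, so induced $P_3$'s certainly can occur. What is actually needed is precisely the extra step you supply, namely the case analysis on the comparability of $\langle a\rangle,\langle b\rangle,\langle c\rangle$ (and symmetrically $\langle b\rangle,\langle c\rangle,\langle d\rangle$) that forces $\langle b\rangle=\langle c\rangle$ and hence $b\sim d$. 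So your proof is in the same spirit as the paper's remark but fills the genuine gap in it; the key ingredient in both is that the subgroup lattice of a cyclic $p$-group is a chain.
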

Moreover if the prime graph of a group is a null graph (or in other words, the group is an EPPO group) then any two adjacent vertices must belong to the same cyclic subgroup of prime power order. So, in that case $P(G)$ contains neither an induced path of length $2$ and above nor any induced cycle of length more than $3$.

A graph is chordal if it contains no induced cycles of length greater than $3$. We recall a theorem from \cite{cmm} that determines the chordality of the power graph of a nilpotent group.
\begin{theorem}
\label{pre_th_3}
Let $G$ be a finite nilpotent group. Then
$P(G)$ is chordal if and only if $G$ is either a group of prime power order or $|G|$ has two prime divisors, one
of the two Sylow subgroups is cyclic, and the other has prime exponent.
\end{theorem}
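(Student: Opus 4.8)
The plan is to exploit nilpotency to reduce everything to the Sylow subgroups and then argue via explicit induced cycles. Since $G=P_{1}\times\cdots\times P_{r}$ is the direct product of its Sylow subgroups and the power graph of any subgroup is an induced subgraph of $P(G)$, chordality of $P(G)$ forces chordality of the power graph of every subgroup. First I would dispose of the case $|\pi(G)|\ge 3$: choosing elements of prime orders $p,q,r$ in three distinct Sylow subgroups and multiplying them produces an element of order $pqr$, so $C_{pqr}\le G$. In $C_{pqr}$ two elements are adjacent precisely when their orders are comparable under divisibility, and the six elements of orders $p,\,pq,\,q,\,qr,\,r,\,rp$ (in this cyclic order) form an induced $C_{6}$; hence $P(G)$ is not chordal. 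This shows a chordal power graph forces $|\pi(G)|\le 2$.

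Next I would treat the prime-power case $|\pi(G)|=1$ and show $P(G)$ is always chordal there. In an induced cycle of length at least $4$, pick a vertex $v$ for which $\langle v\rangle$ has maximal order; both of its neighbours on the cycle lie in $\langle v\rangle$, which is a cyclic $p$-group whose subgroup lattice is a chain. Consequently the two neighbours generate comparable subgroups and are therefore adjacent, producing a chord, a contradiction. So no induced cycle of length $\ge 4$ exists.

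The heart of the argument is the two-prime case $G=P\times Q$. Because $\gcd(|P|,|Q|)=1$ we have $\langle(x,y)\rangle=\langle x\rangle\times\langle y\rangle$, so $(x_{1},y_{1})$ and $(x_{2},y_{2})$ are adjacent exactly when the pairs $(\langle x_{1}\rangle,\langle y_{1}\rangle)$ and $(\langle x_{2}\rangle,\langle y_{2}\rangle)$ are comparable in the product of the cyclic-subgroup lattices of $P$ and $Q$; thus $P(G)$ is a blow-up, by cliques of elements generating the same subgroup, of the comparability graph of this product poset. For sufficiency I assume, up to symmetry, that $P$ is cyclic and $\exp(Q)=q$, so the $P$-lattice is a chain and the $Q$-lattice is a ``fan'' (a least element below an antichain of order-$q$ subgroups). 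In the resulting comparability graph the elements with trivial $Q$-component form a clique (the ``spine''), the elements sharing a fixed order-$q$ subgroup form a clique (a ``branch''), and there are no edges between distinct branches. An induced cycle meets each clique in at most two consecutive vertices and cannot pass directly between two distinct branches, so the spine vertices must separate the branch-arcs; a short count of how few spine vertices are available forces any induced cycle to have length $3$. Hence $P(G)$ is chordal.

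For necessity I would show that if $G=P\times Q$ is of neither admissible shape then $P(G)$ contains an induced cycle of length $\ge 4$, by producing a forbidden subgroup. If both exponents exceed their primes, then $C_{p^{2}}\times C_{q^{2}}=C_{p^{2}q^{2}}\le G$ and the elements of orders $p,\,p^{2}q,\,q,\,pq^{2}$ form an induced $C_{4}$ (the mixed orders $p^{2}q$ and $pq^{2}$ are incomparable, as are $p$ and $q$). If instead each Sylow subgroup contains two distinct subgroups of its own prime order, then $C_{p}\times C_{p}\times C_{q}\times C_{q}\le G$, and the order-$pq$ elements bridging two independent order-$p$ subgroups and two independent order-$q$ subgroups yield an induced $C_{8}$, a subdivided four-cycle. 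I expect the main obstacle to be making this necessity analysis exhaustive and pinning down the exact Sylow conditions: the delicate boundary is the rank-one but non-cyclic case ($p=2$, generalized quaternion), where one must check carefully which failures of ``one Sylow cyclic, the other of prime exponent'' genuinely create a hole, and matching the case division precisely to the stated dichotomy is where the care is required.
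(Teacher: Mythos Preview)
The paper does not actually prove this theorem; it is quoted without proof in the Preliminaries section as a result from \cite{cmm}, so there is no in-paper argument to compare against.  Judged on its own, your outline is largely sound: the $C_{6}$ inside $C_{pqr}$ disposes of $|\pi(G)|\ge 3$, the ``pick a vertex of maximal order'' argument cleanly yields chordality for $p$-groups, and your spine/branch picture for $C_{p^{a}}\times Q$ with $\exp(Q)=q$ is correct and can be made rigorous along the lines you indicate (each branch together with the spine is a copy of $P(C_{p^{a}q})$, which is chordal; two spine segments force a chord).

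The genuine gap is in your necessity dichotomy.  Your two bad configurations --- (a) $\exp(P)\ge p^{2}$ and $\exp(Q)\ge q^{2}$, yielding $C_{p^{2}q^{2}}$ and an induced $C_{4}$; and (b) each Sylow subgroup has at least two minimal subgroups, yielding $C_{p}^{2}\times C_{q}^{2}$ and an induced $C_{8}$ --- do \emph{not} cover the full negation of ``one Sylow cyclic and the other of prime exponent''.  For example, take $P=C_{p^{2}}\times C_{p}$ (non-cyclic, exponent $p^{2}$) and $Q=C_{q}$: this pair is inadmissible, yet $\exp(Q)=q$ defeats (a) and the unique minimal subgroup of $Q$ defeats (b).  What is missing is a third construction using two \emph{distinct} cyclic subgroups of order $p^{2}$ in $P$ that share their order-$p$ socle: if $u,v\in P$ have order $p^{2}$ with $\langle u\rangle\neq\langle v\rangle$ but $\langle u^{p}\rangle=\langle v^{p}\rangle$, and $b\in Q$ has order $q$, then $ub\sim u^{p}\sim vb\sim b\sim ub$ is an induced $C_{4}$ (the diagonals $ub\sim vb$ and $u^{p}\sim b$ are absent).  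This same construction handles your quaternion worry directly (take $u=i$, $v=j$ in $Q_{8}$), so the quaternion case is not the delicate boundary; the real remaining work is to show that every non-cyclic $p$-group with exponent at least $p^{2}$ contains such a pair $u,v$, after which the three constructions together exhaust the inadmissible cases.
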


\section{Chain Graph}
A graph is called chain graph if it forbids $\{C_{3}, C_{5}, 2K_{2}\}$. It is obvious that if we consider the power graph of any finite group $G$ then $P(G)$ is a chain graph if and only if $G$ is a $2$-group of exponent $2$. Thus, in this section we consider the proper power graph and determine the groups whose proper power graph is a chain graph.
\begin{theorem}
For any finite group $G$, $P^{*}(G)$ is a chain graph if and only if $G$ is either a) $C_{3}$ or b) a $2$-group of exponent $2$ or c) a EPO group $C_{3}\rtimes P$, where $P$ is a non-cyclic $2$-group of exponent $2$ or d) $S_{3}$.
\end{theorem}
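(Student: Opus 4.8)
The plan is to reduce the problem to a statement about element orders and then read off the group structure. Since the proper power graph of any subgroup of $G$ is an induced subgraph of $P^{*}(G)$, and the non-identity elements of a cyclic $p$-group $C_{p^{k}}$ form a clique in the power graph, I would first establish the reduction: because $P^{*}(G)$ is $C_{3}$-free (hence triangle-free), every non-identity element of $G$ has order $2$ or $3$. Concretely, I would show that an element of order $n\ge 4$ already forces a triangle inside $\langle a\rangle\cong C_{n}$: for prime-power $n$ this is immediate, since $P^{*}(C_{p^{k}})$ is complete on $p^{k}-1\ge 3$ vertices, and the only other small orders to inspect ($n=4,6$ and multiples, or a prime factor $\ge 5$) each yield an explicit pairwise-adjacent triple by a short direct check. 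Hence $G$ is an EPO group with $\pi(G)\subseteq\{2,3\}$.

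Next I would analyse the edge structure of $P^{*}(G)$ under this order hypothesis. An involution $t$ has $\langle t\rangle=\{e,t\}$, so $t$ is adjacent to no other non-identity vertex and is isolated in $P^{*}(G)$; an element $a$ of order $3$ is adjacent in $P^{*}(G)$ only to $a^{-1}$. Therefore $P^{*}(G)$ is a disjoint union of isolated vertices (the involutions) together with exactly one edge for each subgroup of order $3$, and distinct order-$3$ subgroups contribute \emph{vertex-disjoint} edges. The $2K_{2}$-free condition thus becomes the purely group-theoretic requirement that $G$ has \emph{at most one} subgroup of order $3$, and $C_{5}$-freeness is then automatic since the graph is a matching plus isolated vertices.

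From here I would split according to which orders actually occur. If all non-identity elements have order $2$, then $G$ is elementary abelian of exponent $2$ and $P^{*}(G)$ is edgeless — case (b). If all have order $3$, a unique order-$3$ subgroup forces $|G|=3$, i.e.\ $G\cong C_{3}$ — case (a). In the mixed case the unique subgroup of order $3$ is normal, the Sylow $2$-subgroup $P$ is elementary abelian (every $2$-element has order $2$), and $G=C_{3}\rtimes P$ with action $\phi\colon P\to\operatorname{Aut}(C_{3})\cong C_{2}$; demanding that $G$ have no element of order $6$ pins down the admissible $P$ and is meant to produce $S_{3}$ together with the family in (c). Sufficiency I would then verify by confirming that each listed group has proper power graph equal to ``(at most one edge) $\cup$ (isolated vertices),'' which is manifestly $\{C_{3},C_{5},2K_{2}\}$-free.

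The main obstacle is exactly this mixed case: converting the two graph conditions into a correct and complete list of semidirect products. The delicate point is the interaction between the constraints — $2K_{2}$-freeness forces a \emph{unique}, hence \emph{normal}, subgroup of order $3$, so that $G/C_{3}\cong P$ and the whole action is encoded by $\phi\colon P\to\operatorname{Aut}(C_{3})\cong C_{2}$, while the EPO requirement forbids any involution of $P$ from centralising $C_{3}$ (an involution $t$ with $\phi(t)=1$ commutes with $a$ and produces an element $at$ of order $6$). Reconciling ``unique normal $C_{3}$'' with ``$P$ non-cyclic of exponent $2$'' — that is, determining precisely which homomorphisms $\phi$ keep $G$ both EPO and $2K_{2}$-free — is the subtle step where the real work of the theorem lies; the triangle-free reduction and the matching description above are routine by comparison, so I would expect to spend most of the effort, and exercise the most care, in tightening this last analysis.
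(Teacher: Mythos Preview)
Your approach mirrors the paper's proof closely: both use $C_3$-freeness to force every non-identity element to have order $2$ or $3$, then use $2K_2$-freeness to force at most one subgroup of order~$3$ (hence normal), and finally split into cases according to which prime orders occur. The paper is terser but structurally identical.

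The ``subtle step'' you flag actually resolves in one line from what you have already written, and the conclusion is that case~(c) of the stated theorem is \emph{vacuous}. You observed that EPO forbids any non-identity $t\in P$ with $\phi(t)=1$; that is, $\ker\phi$ is trivial. Since $\operatorname{Aut}(C_3)\cong C_2$, the map $\phi\colon P\to C_2$ is then injective, so $|P|\le 2$ and $P$ is cyclic. Hence no EPO group $C_3\rtimes P$ with $P$ a non-cyclic $2$-group of exponent~$2$ exists, and the list in the theorem collapses to (a), (b), (d). The paper does not carry out this check --- in the forward direction it simply records $G\cong C_3\rtimes P$ and separates off the cyclic-$P$ case as $S_3$, and in the converse it treats case~(c) as though such groups exist --- so there is no hidden difficulty here: your own analysis of the action has already completed the argument you were worried about, and the only genuine group in the mixed case is $S_3$.
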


\begin{proof}
Let, $P^*(G)$ be a chain graph.\\
Since $P^{*}(G)$ is $C_{3}$-free, so it does not contain an element of order $\geq 4$. Clearly, $o(G)$ has at most two distinct prime divisors. Otherwise, $o(G)$ has at least one odd prime divisor say $p\geq 5$. Then there exists an element $a$ of order $p$ and $\{a, a^{2}, a^{-1}\}$ generate a $C_{3}$ in $P^{*}(G)$. \\
Now, if $G$ is a $p$-group then $G$ must be either a $2$-group of exponent $2$ or a $3$-group of exponent $3$. But if $G$ is a $3$-group of exponent $3$ then $P^{*}(G)$ is the disjoint union of multiple copies of $K_{2}$. So $P^*(G)$ contains $2K_{2}$ unless $G$ will be $C_{3}$. Therefore $G$ is either a) $C_{3}$ or b) a $2$-group of exponent $2$.\\
Next consider $o(G)$ has two distinct prime divisor. Since, $G$ cannot have any element of order $\geq 4$ so $\pi(G)=\{2, 3\}$ and $G$ is an EPO-group. Again, we observe that the Sylow $3$-subgroup must be normal and cyclic; elsewhere there exist two elements, say $a, b$, of order $3$ in $G$ such that the pairs $\{a, a^2\}, \{b, b^2\}$ form $2K_{2}$. Thus $G\cong C_{3}\rtimes P$ with $P$ is a $2$-group of exponent $2$.\\
In particular, if $P$ is cyclic then $G \cong S_{3}$.\\
Converse:\\
a) If $G\cong C_{3}$ then $P^*(C_{3})$ is a complete graph $K_{2}$. Thus, $P^*(G)$ is a chain graph.\\
b) Let $G$ be a $2$-group of exponent $2$ then $P^{*}(G)$ is the disjoint union of isoated vertices. Thus, $P^{*}(G)$ is a chain graph.\\
c) Let $G$ be a EPO group $C_{3}\rtimes P$, where $P$ is a $2$-group of exponent $2$. Clearly, $P^*(G)$ is the disjoint union of $K_{2}$  and some isolated vertices. This implies that $P^*(G)$ is $\{C_{3}, C_{5}, 2K_{2}\}$-free. Hence $P^*(G)$ is a chain graph.\\
d) If $G\cong S_{3}$ then $P^*(G)$ is $K_{2}\cup 3K_{1}$. So it is a chain graph.
\end{proof}
\section{$\{P_{5}, \overline{P_{5}}\}$-free}
 In this section we consider the finite nilpotent groups, simple groups of Lie type and sporadic simple groups, and explore those groups whose power graph is $\{P_{5}, \overline{P_{5}}\}$-free. Moreover, we also find the structures of two finite groups $G$ and $H$ such that $P(G\times H)$ is $\{P_{5}, \overline{P_{5}}\}$-free.
\subsection{Nilpotent group, Direct product of two groups}
\begin{theorem}
\label{th_p5}
Let $G$ be a finite nilpotent group. Then $P(G)$ is $P_{5}$-free if and only if $G$ is either a $p$-group or a cyclic group $C_{p^{a}q}$, where $p, q$ are distinct primes and $a\geq 1$.
\end{theorem}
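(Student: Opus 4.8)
The plan is to treat the two implications separately, proving the easy ``if'' direction by a clique-cover argument and reserving the real work for the forward direction, where I would produce explicit induced copies of $P_{5}$ whenever $G$ is neither a $p$-group nor of the form $C_{p^{a}q}$.

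For the ``if'' direction, if $G$ is a $p$-group then $|G|$ is a prime power, so $P(G)$ is a cograph by Theorem \ref{pre_th_1}; being $P_{4}$-free it is a fortiori $P_{5}$-free, since any induced $P_{5}$ contains an induced $P_{4}$ on four consecutive vertices. If $G\cong C_{p^{a}q}$, I would use that in a cyclic group two elements are adjacent in $P(G)$ precisely when their orders are comparable under divisibility. Every element then has order $p^{i}$ or $p^{i}q$ for some $0\le i\le a$; the elements of $p$-power order form a clique (their orders are totally ordered), and so do the elements whose order is divisible by $q$. Thus $P(C_{p^{a}q})$ is covered by two cliques, so its independence number is at most $2$. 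Since $P_{5}$ contains the independent set $\{v_{1},v_{3},v_{5}\}$ of size $3$, no induced $P_{5}$ can occur.

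For the forward direction, write the nilpotent group as the direct product of its Sylow subgroups. First I would show $|\pi(G)|\le 2$: if $p,q,r$ are three distinct prime divisors, then $G$ contains $C_{p}\times C_{q}\times C_{r}\cong C_{pqr}$, and the elements of orders $q,\,pq,\,p,\,pr,\,r$ induce a $P_{5}$ (each consecutive pair has comparable orders while every other pair is incomparable). With $G=P\times Q$ for two primes, the central lemma is that a non-cyclic $p$-group always contains two incomparable cyclic subgroups $\langle a_{1}\rangle,\langle a_{2}\rangle$ (otherwise the cyclic subgroups form a chain with a maximum, forcing $P$ cyclic). Given such $a_{1},a_{2}$ in a non-cyclic factor and an element $b$ of order $q$ in the other factor, I claim $a_{1}-a_{1}b-b-a_{2}b-a_{2}$ is an induced $P_{5}$: coprimality gives $a_{i},b\in\langle a_{i}b\rangle$, yielding the four path edges, while incomparability of $\langle a_{1}\rangle,\langle a_{2}\rangle$ together with the $q$-divisibility of the orders of $a_{1}b,a_{2}b$ rules out every chord. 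Hence both Sylow subgroups must be cyclic. Finally, if both $P$ and $Q$ were cyclic of exponent at least $p^{2}$ and $q^{2}$ respectively, then $G$ would contain $C_{p^{2}}\times C_{q^{2}}\cong C_{p^{2}q^{2}}$, in which the orders $p^{2},\,p,\,pq,\,q,\,q^{2}$ induce a $P_{5}$; this forces one Sylow subgroup to have prime exponent and order exactly that prime, so $G\cong C_{p^{a}q}$.

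The main obstacle is the forward direction, and specifically the verification that the configuration $a_{1}-a_{1}b-b-a_{2}b-a_{2}$ is genuinely \emph{induced} in the direct product $P\times Q$, where adjacency means one element is an actual power of the other rather than merely a divisibility of orders. The delicate non-edges are $a_{1}b\not\sim a_{2}b$ and $a_{i}b\not\sim a_{j}$, which I would settle by writing $\langle a_{i}b\rangle=\langle a_{i}\rangle\times\langle b\rangle$ and reducing each would-be power relation to the inclusion $a_{1}\in\langle a_{2}\rangle$ or $a_{2}\in\langle a_{1}\rangle$, both excluded by incomparability. Phrasing the key lemma in terms of incomparable cyclic subgroups (rather than in terms of having several subgroups of order $p$) is what lets this single argument cover every non-cyclic $p$-group uniformly, including the generalized quaternion groups, which possess a unique subgroup of order $2$ but still contain incomparable cyclic subgroups of order $4$.
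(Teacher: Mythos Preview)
Your proof is correct and follows the same overall case analysis as the paper---rule out three primes, then a non-cyclic Sylow factor, then both exponents exceeding the prime---but two of your steps differ in ways worth noting. First, your converse for $C_{p^{a}q}$ via a two-clique cover (elements of $p$-power order versus elements whose order is divisible by $q$) is sharper than the paper's ``converse part is obvious'': it bounds the independence number by $2$ and so excludes every induced subgraph with an independent triple, not just $P_{5}$. Second, in the non-cyclic Sylow step the paper claims the existence of two non-adjacent elements \emph{of order $p$}, which fails precisely when $p=2$ and the Sylow $2$-subgroup is generalized quaternion. Your formulation in terms of incomparable cyclic subgroups $\langle a_{1}\rangle,\langle a_{2}\rangle$ (present in any non-cyclic finite group, since otherwise the cyclic subgroups form a chain with a maximum generating the whole group) handles this case uniformly, and your verification of the non-edges of $a_{1}\sim a_{1}b\sim b\sim a_{2}b\sim a_{2}$ via $\langle a_{i}b\rangle=\langle a_{i}\rangle\times\langle b\rangle$ is exactly the right device. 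So your argument is essentially the paper's, but with a cleaner converse and a genuine patch of a gap in the forward direction.
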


\begin{proof}
Let $G$ be a finite nilpotent group such that $P(G)$ is $P_{5}$-free.\\
\textbf{\underline{Claim 1.}} $o(G)$ has at most two distinct prime divisors.\\
\textbf{Proof of Claim 1:} Suppose, $p, q, r$ are $3$ distinct primes divide $o(G)$. Let $a, b, c$ be the elements of order $p, q, r$ respectively. Then $P(G)$ contains a path $a\sim ab \sim b \sim bc\sim c$. \\
According to the above claim we have either $G$ is a $p$-group or $o(G)=p^{a}q^{b}$, where $p, q$ are distinct primes and $a, b \geq 1$. We now consider the following cases.\\
\textbf{Case 1.} Let $G$ be a $p$-group.\\
By Theorem \ref{pre_th_1}, $P(G)$ is a cograph implies $P(G)$ is $P_{5}$-free.\\
\textbf{Case 2.} Let $o(G)$ has two distinct prime divisors say $p, q$.\\
Suppose, $o(G)=p^{a}q^{b}$ with $a, b \geq 1$. Let $P, Q$ be the Sylow $p$- and Sylow $q$-subgroups of $G$.\\
\textbf{\underline{Claim 2.}} We claim that both  Sylow subgroups must be cyclic.\\
For the sake of contradiction, let the Sylow $p$-subgroup $P$ be non-cyclic. Then there exist elements say $a, b$ of order $p$ such that $a \nsim b$ in $P(G)$. In that case, $P(G)$ contains a path $a\sim ac\sim c\sim bc \sim b$, where $c \in Q$. Therefore, $G \cong C_{p^{a}q^{b}}$ with $a, b \geq 1$. \\
If both $a, b >1$ then again the path $a \sim b \sim c \sim d \sim e$ is contained in $P(G)$, where $o(a)=p^{a}, o(b)=p, o(c)=pq, o(d)=q, o(e)=q^{b}$. Thus one of $a$ or $b$ must be $1$ and hence $G$ is the cyclic group $C_{p^{a}q}$ with $a\geq 1$.\\
Converse part is obvious.
\end{proof}

\begin{theorem}
\label{th_p5_1}
Let $G$ be a finite nilpotent group. Then $P(G)$ is $\{P_{5}, \overline{P_{5}}\}$-free if and only if $G$ is either i) a $p$ group or ii) a cyclic group $C_{p^{a}q}$, where $p, q$ are distinct primes and $a \geq 1$.
\end{theorem}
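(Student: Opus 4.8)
The plan is to leverage the fact that the stated characterisation coincides exactly with the $P_5$-free characterisation already obtained in Theorem \ref{th_p5}. Since every $\{P_5,\overline{P_5}\}$-free graph is in particular $P_5$-free, the forward implication is immediate: if $P(G)$ is $\{P_5,\overline{P_5}\}$-free then it is $P_5$-free, and Theorem \ref{th_p5} forces $G$ to be a $p$-group or a cyclic group $C_{p^a q}$. Thus the whole content of the converse is to verify that for these two families $P(G)$ is additionally $\overline{P_5}$-free, the $P_5$-freeness being already supplied by Theorem \ref{th_p5}.

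For the $p$-group case I would argue through cographs. By Theorem \ref{pre_th_1}, $P(G)$ is a cograph, and cographs are precisely the $P_4$-free graphs and are closed under complementation. A direct inspection shows that both $P_5$ and its complement $\overline{P_5}$ contain an induced $P_4$ (the vertices of $\overline{P_5}$ have degrees $3,2,2,2,3$, and one readily extracts an induced $P_4$, e.g.\ by following a degree-$3$ vertex, a neighbour, and so on). Consequently any $P_4$-free graph is simultaneously $P_5$-free and $\overline{P_5}$-free, so the cograph $P(G)$ is $\{P_5,\overline{P_5}\}$-free. This also settles the subcase $a=1$ of the cyclic family, since $C_{pq}$ is a cograph by Theorem \ref{pre_th_1}.

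The genuine work, and the main obstacle, is the cyclic case $G\cong C_{p^a q}$ with $a\ge 2$, where $P(G)$ is no longer a cograph. Here I would exploit the clean order structure of power graphs of cyclic groups: two elements are adjacent precisely when one of their orders divides the other. First I would discard the identity and the generators, which are universal vertices and hence cannot lie in an induced $\overline{P_5}$ (every vertex of $\overline{P_5}$ has degree at most $3$, whereas a universal vertex would have degree $4$ inside a $5$-set). The remaining vertices split into three cliques according to order type: the set $A$ of elements of $p$-power order, the set $B$ of elements of order $q$, and the set $C$ of elements of mixed order $p^i q$ with $1\le i\le a-1$. A short divisibility computation shows that $B$ and $C$ are completely joined, that $A$ is completely non-adjacent to $B$, and that an element of $A$ of order $p^i$ is joined to an element of $C$ of order $p^j q$ exactly when $i\le j$. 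The decisive observation is that every non-edge among these vertices has at least one endpoint in $A$.

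From this the argument becomes combinatorial. Since $\overline{P_5}$ is the complement of a $P_5$, its non-edges form a path on all five vertices; hence in any induced copy the chosen vertices lying in $A$ must form a vertex cover of that $P_5$, while the chosen vertices outside $A$ lie in the clique $B\cup C$ and therefore form an independent set of the non-edge path, of size at most $3$. This leaves only the configurations with two vertices in $A$ (the unique minimum cover) or three vertices in $A$ (forced to be the unique triangle of $\overline{P_5}$); translating the required adjacencies into inequalities between the $p$-exponents produces in each case a contradictory pair of strict inequalities of the form $e<e'$ and $e'<e$, so no induced $\overline{P_5}$ can exist. I expect this exponent bookkeeping, namely correctly matching the degree-$3$ and degree-$2$ vertices of $\overline{P_5}$ to the cliques $A$, $B$ and $C$, to be the only delicate step; everything else follows from Theorems \ref{th_p5} and \ref{pre_th_1}.
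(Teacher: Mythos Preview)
Your proposal is correct. The forward implication and the $p$-group case are handled exactly as the paper does (reduce to Theorem~\ref{th_p5} for the forward direction, and use the cograph property from Theorem~\ref{pre_th_1} to exclude $P_4$, hence both $P_5$ and $\overline{P_5}$, in the $p$-group case).

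For the cyclic case $C_{p^{a}q}$ with $a\ge 2$, your route differs from the paper's. The paper argues in one line: assume an induced $\overline{P_5}$, pick four ``consecutive'' vertices of orders $p^{a},p,pq,q$, and observe that any admissible fifth vertex must have order $p^{i}q$ and hence be adjacent to both the second and third vertices, contradicting the $\overline{P_5}$ structure. Your argument is more structural: you strip off the universal vertices, partition the rest into the three cliques $A$ ($p$-power orders), $B$ (order $q$), $C$ (mixed orders $p^{i}q$), note that every non-edge meets $A$, and deduce that the $A$-vertices of a putative $\overline{P_5}$ form an independent vertex cover of its complementary $P_5$, forcing them to be either $\{2,4\}$ or $\{1,3,5\}$ in the path labelling. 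The exponent bookkeeping then gives a cyclic chain of strict inequalities in each case. This buys you a fully self-contained case analysis with no hidden assumption about which order configurations can occur; the paper's version, by contrast, is terser but leaves implicit why the four orders $p^{a},p,pq,q$ are the only ones to consider. Your approach is the more robust of the two and would generalise more readily to related forbidden-subgraph questions.
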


\begin{proof}
Let $G$ be a finite nilpotent group such that $P(G)$ is $\{P_{5}, \overline{P_{5}}\}$-free. Since, $P(G)$ is $P_{5}$-free so by Theorem \ref{th_p5} $G$ must be the groups i) or ii).\\
\underline{Converse Part:} \\
i) Let $G$ be a $p$-group. If $P(G)$ contains $\overline{P_{5}}$ then $P(G)$ must comprises a $4$-vertex induced path. This contradicts the Theorem \ref{pre_th_1}, that is $P(G)$ is a cograph. \\
ii) On the other hand, let $G$ be the cyclic group $C_{p^{a}q}$ ($a, b \geq 1$). Suppose $P(G)$ has induced subgraph $\overline{P_{5}}$. We choose $4$ consecutive vertices of orders $p^{a}, p, pq, q$. Then the fifth vertex must be of order $p^{i}q$, which is adjacent to both the second and third vertices. This leads to a contradiction that $\overline{P_{5}}$ is an induced subgraph. \\
Thus, in any cases $P(G)$ is $\{P_{5}, \overline{P_{5}}\}$-free.
\end{proof}

\begin{theorem}
\label{th_p5_direct_product}
Let $G, H$ be two finite groups. Then $P(G\times H)$ is $\{P_{5}, \overline{P_{5}}\}$-free if and only if $G, H$ take one of the following forms:\\
a) both $G, H$ are power of same prime;\\
b) one of $G, H$ is the cyclic group $C_{p^k}$ and the other one is $C_{q}$, where $p, q$ are distinct primes;\\
c) one of $G, H$ is a cyclic group $C_{q^m}$ then we have the other is the group either (i) or (ii).\\
i) $C_{p^r}\rtimes Q$ ($r \geq 1$) if $m=1$.\\
ii) $C_{p}\times Q$ if $m>1$.\\
Provided $p, q$ are distinct prime divisors of $o(G\times H)$ and $Q$ is Sylow $q$-subgroup of $H$.
\end{theorem}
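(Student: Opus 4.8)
The plan is to prove necessity and sufficiency separately, using throughout the principle (recalled in the Preliminaries) that $P(K)$ is an induced subgraph of $P(G\times H)$ for every subgroup $K$, so that in particular $P(G)$ and $P(H)$ are themselves induced subgraphs. For the forward direction I would first bound $|\pi(G\times H)|$ by $2$: given three distinct primes dividing $o(G\times H)=o(G)o(H)$, I would manufacture an induced $P_5$ of the shape (order $p$)$\sim$(order $pr$)$\sim$(order $r$)$\sim$(order $qr$)$\sim$(order $q$), realising the mixed orders $pr,qr$ by cross elements $(x,y)$ whenever the relevant primes sit in different factors and by the internal subgraphs $P(G),P(H)$ when they sit in the same factor; this mirrors Claim 1 of Theorem \ref{th_p5}. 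Once $|\pi(G\times H)|\le 2$, the case $|\pi(G\times H)|=1$ makes $G\times H$ a $p$-group, giving family (a), and $\{P_5,\overline{P_5}\}$-freeness is then immediate from Theorem \ref{pre_th_1} together with the observation that $\overline{P_5}$ contains an induced $P_4$ while a cograph has none.

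So suppose $\pi(G\times H)=\{p,q\}$. The pivotal step, which I expect to be the main obstacle, is to show that at least one of $G,H$ is a prime power group. If instead both $G$ and $H$ were divisible by $p$ and by $q$, I would select order-$p$ and order-$q$ elements inside each factor and splice them through a shared order-$q$ element using two distinct order-$p$ subgroups coming from one factor, producing an induced path (order $p$)$\sim$(order $pq$)$\sim$(order $q$)$\sim$(order $pq$)$\sim$(order $p$), exactly as one builds by hand in $S_3\times S_3$ or $C_6\times C_6$. The technical difficulty is that this construction requires a careful case analysis on the Sylow structure (cyclic versus non-cyclic Sylow subgroups, unique versus non-unique subgroups of order $p$) in order to guarantee the two ``parallel'' order-$pq$ cyclic subgroups needed to close the path. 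Granting this, if both factors are prime power groups they must involve different primes, $G\times H$ is nilpotent, and Theorem \ref{th_p5_1} forces $G\times H\cong C_{p^aq}$, i.e. family (b).

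The remaining possibility is that, after relabelling, exactly one factor is a prime power group, which I would write as $G$ being a $q$-group while $H$ is divisible by both $p$ and $q$. I would first force $G$ to be cyclic: a non-cyclic $q$-group supplies two non-adjacent elements of order $q$, which together with an order-$p$ element and a cross element of order $pq$ from $H$ again yield an induced $P_5$; hence $G\cong C_{q^m}$. Next I would show that $H$ has a normal cyclic Sylow $p$-subgroup on which its Sylow $q$-subgroup $Q$ acts, so $H\cong C_{p^r}\rtimes Q$, the cyclicity once more coming from forbidding two independent subgroups of order $p$. The dichotomy on $m$ then reflects how much $q$-torsion $G$ already contributes: when $m=1$ the single factor $C_q$ is mild enough that the full $C_{p^r}\rtimes Q$ with $r\ge 1$ survives, giving (c)(i), whereas when $m>1$ the element of order $q^2$ in $G$ lengthens the available paths, forcing $r=1$ and a trivial action, so that $H$ collapses to $C_p\times Q$, giving (c)(ii). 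Pinning down these two reductions — excluding $r>1$ and excluding a non-trivial action precisely when $m>1$, in each case by exhibiting the offending induced $P_5$ — is where most of the computation lives.

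For sufficiency I would verify each family directly. Family (a) is a $p$-group, handled by Theorem \ref{pre_th_1}. In family (b) the product is the cyclic group $C_{p^kq}$, which is $\{P_5,\overline{P_5}\}$-free by Theorem \ref{th_p5_1}. For family (c) I would first list the possible element orders of $C_{q^m}\times H$ — namely powers of $q$, powers of $p$, and the mixed orders $p^iq^j$ that the structure actually permits — and then argue that no five vertices can induce $P_5$ or $\overline{P_5}$: any putative induced $P_5$ has its endpoints and interior tied together either through a common cyclic subgroup of order $p^iq^j$ or through the complete subgraph $P(C_{q^m})=K_{q^m}$, which always supplies the extra adjacency destroying the induced path (and dually the missing adjacency destroying an induced $\overline{P_5}$), just as the fifth vertex did in the converse part of Theorem \ref{th_p5_1}. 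Carrying this out separately for (c)(i) and (c)(ii), with the trivial versus non-trivial action accounted for, completes the proof.
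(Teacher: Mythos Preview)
Your outline matches the paper's closely: bound $|\pi(G\times H)|\le 2$, dispatch the $p$-group case via Theorem~\ref{pre_th_1}, argue that at least one factor must be a prime-power group, then force cyclicity of the $q$-power factor and pin down the structure of the other. The converse arguments are also of the same flavour (a case analysis on the coordinates of a putative induced $P_5$ or $C_4$).

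The one genuine methodological difference is in the step you rightly flag as the main obstacle. Where you propose a direct construction of an induced $P_5$ when both $G$ and $H$ are divisible by $pq$ --- which, as you acknowledge, forces a case split on cyclic versus non-cyclic Sylow subgroups to locate two non-adjacent elements of the same prime order --- the paper instead repeatedly exploits \emph{nilpotent cross-subgroups}. It forms $P_G\times Q_H$ and $Q_G\times P_H$ (a Sylow subgroup of one factor times a Sylow subgroup of the other), observes these are nilpotent, and feeds them into the already-proved Theorem~\ref{th_p5_1} to force each Sylow piece to be cyclic of bounded exponent. It then notes that neither $G$ nor $H$ can contain an abelian subgroup $M$ of order $pq$, since otherwise $M$ times a Sylow subgroup of the other factor would again violate Theorem~\ref{th_p5_1}; a short normality argument finishes the contradiction. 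The same device drives the paper's derivation of (c): once $G\cong C_{q^m}$, applying Theorem~\ref{th_p5_1} to the nilpotent subgroup $G\times P_H$ immediately yields the dichotomy $m=1,\ P_H\cong C_{p^r}$ versus $m>1,\ P_H\cong C_p$, and the ``no abelian order-$pq$ subgroup in $H$'' observation gives the semidirect product form. Your hands-on $P_5$ constructions would work, but at the cost of the case analysis you anticipate; the paper's reduce-to-nilpotent trick is shorter and systematically recycles Theorem~\ref{th_p5_1} in place of fresh path-building.
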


\begin{proof}
Suppose $P(G\times H)$ is $\{P_{5}, \overline{P_{5}}\}$-free. \\
First observe that $o(G\times H)$ has at most two distinct prime divisors. Elsewhere there exists $3$ primes say $p, q, r$  such that $p, q|o(G)$ and $r|o(H)$. Then $P(G\times H)$ contains a path $a\sim ac\sim  c \sim bc \sim b$, where $o(a)=p, o(b)=q$ and $o(c)=r$. \\
If $G$ and $H$ are both power of same prime then $G\times H$ is a $p$-group. So, there is nothing to prove (see Theorem \ref{th_p5_1}).\\
Let $o(G\times H)$ has precisely two distinct prime divisors say $p, q$. \\
If both $G, H$ are abelian then $G, H$ have the structure in b).\\
Let us assume that both $G, H$ can not be abelian.\\
\textbf{\underline{Claim:}} one of $o(G), o(H)$ must be of prime power order.\\
\textbf{Proof of the claim} Let $p, q$ be two primes such that $pq|o(G), o(H)$ both. Consider the Sylow $p$- and Sylow $q$-subgroups of both $G, H$ are $P_{G}, Q_{G}$ and $P_{H}, Q_{H}$ respectively. Now $P_{G}\times Q_{H}$ is nilpotent and $\{P_{5}, \overline{P_{5}}\}$-free. Without loss of generality let $P_{G}\cong C_{p^a}$  and $Q_{H}\cong C_{q}$ ( by Theorem \ref{th_p5_1}). Similarly, $Q_{G}\times P_{H}$ is nilpotent and $\{P_{5}, \overline{P_{5}}\}$-free implies $Q_{G}\cong C_{q^b}$ ($b\geq 1$) and $P_{H}\cong C_{p}$ or $Q_{G}\cong C_{q}$ and $P_{H}\cong C_{p^k}$ ($k\geq 1$). Also, none of $G, H$ contains any abelian subgroup say $M$ of order $pq$; otherwise $G\times H$ contains one of the four nilpotent subgroups  $P_{G}\times M$, $Q_{G}\times M$, $M\times P_{H}$, $M\times Q_{H}$, whose power graphs comprise a $P_{5}$ [see Theorem \ref{th_p5_1}]. \\
Since, $G,~H$ can not have abelian subgroup of order $pq$ so they are not nilpotent.  So, one of the two Sylow subgroups of them must be not normal. Without loss of generality, let the Sylow $q$-subgroups of $G$ be not normal. Then the Sylow $p$-subgroup $P_{H}$ must be normal as otherwise $P(G\times H)$ has $P_{5}$. Using the similar argument we can say that the Sylow $q$-subgroup of $H$ ($Q_{H}$) is not normal and the Sylow $p$-subgroup of $G$ ($P_{G}$) is normal (since both $G, H$ are not nilpotent). Hence $G\cong C_{p^a}\rtimes C_{q}$ and $H \cong C_{p^k}\rtimes C_{q}$ or $G \cong C_{p^a}\rtimes C_{q^b}$ and $H\cong C_{p}\rtimes C_{q}$. But in any of the cases we obtain that $G$ contains two elements of order $q$ (say, $a, b$) such that $a\nsim b$ in $P(G\times H)$ and $H$ contains an element of order $p$, say $c$, for which $P(G\times H)$ has a path $a\sim ac\sim c\sim bc\sim b$. \\
Therefore, one of $G, H$ must be the group of prime power order. Let $G$ be the group with $o(G)=q^{m}$ and $pq|o(H)$. \\
Clearly, $G$ must be cyclic elsewhere $G$ contains two elements of order $q$ that are non-adjacent in $P(G\times H)$ and $H$ has an element of order $p$ such that $P(G\times H)$ comprises a $5$-vertex induced path.\\
Let $P_{H}, Q$ be Sylow $p$- and Sylow $q$-subgroups of $H$. Now, $G\times P_{H}$ is nilpotent gives either $G\cong C_{q}$ and $P_{H}\cong C_{p^r}$ or $G\cong C_{q^m}$ and $P_{H}\cong C_{p}$. But clearly $H$ does not consist any abelian subgroup $K$ of order $pq$ as otherwise $G\times H$ contains a nilpotent subgroup $G\times K$ whose power graph has $P_{5}$ (by Theorem \ref{th_p5_1}). This implies that either $G\cong C_{q}$ and $H\cong C_{p^r}\rtimes Q$ or $G\cong C_{q^m}$ and $H\cong C_{p}\rtimes Q$, where $Q$ is the Sylow $q$-subgroup of $H$. Thus we get the structures of $G, H$ which take the form as given in c).\\
\underline{Converse Part:}\\
If $G$ and $H$ are either a) or b) then $P(G\times H)$ is $\{P_{5}, \overline{P_{5}}\}$-free by Theorem \ref{th_p5_1}.\\
Let $G$ and $H$ have the form as in c). Firstly, we prove that $P(G)$ is $P_{5}$-free. If possible let $P(G)$ contain a $5$-vertex induced path $P_{5}$. Let $x_{1}\rightarrow x_{2}\leftarrow x_{3}\rightarrow x_{4}\leftarrow x_{5}\rightarrow \cdots$ be such a path, where $x_{i}=(g_{i}, h_{i})$. \\
If $o(h_{3})=1$, then $x_{2}\sim x_{4}$ as $G$ is cyclic.\\
If $o(g_{3})=1$, then again $x_{2}\sim x_{4}$ since $H$ contains only elements of order power of either $p$ or $q$.\\
Let $o(g_{3})$ and $o(h_{3})$ both be a  power of $q$. Then we obtain that $x_{2}\sim x_{4}$. \\
Suppose $o(g_{3})$ is a power of $q$ and $o(h_{3})$ is a power of $p$. Let $g_{3}=a, h_{3}=b$. Without loss of generality,  we assume that $g_{2}=1, h_{2}=b^{q^k}$ and $g_{4}=a^{p^l}, h_{4}=1$. Then as  $x_{1}\sim x_{2}$ and $G$ is cyclic so $x_{1}\sim x_{3}$. Similarly, if we reverse $(g_{2}, h_{2})$ and $(g_{4}, h_{4})$ then we have either $x_{1}\sim x_{3}$ or $x_{3}\sim x_{5}$. Thus, $P(G)$ does not contain $P_{5}$. \\
Next we prove that $P(G)$ is $\overline{P_{5}}$-free. Clearly, if a graph contains $\overline{P_{5}}$ then the graph must have an induced $4$-vertex cycle. Suppose, $P(G)$ carries $\overline{P_{5}}$. Then $P(G)$ has a $4$-vertex induced cycle say $A\sim B\sim C\sim D\sim A$ with $C=(a, b)$. If one of $a$ or $b$ is $1$ then $B\sim D$. On the other hand, if $a, b$ are both power of $q$ then also $B\sim D$; whereas for the case $o(a)=q^{k}, o(b)=p^{l}$ we obtain $A\sim C$. Thus, $P(G)$ does not contain any $4$-vertex induced cycle and hence $P(G)$ is $\overline{P_{5}}$-free.
\end{proof}

\begin{theorem}
\label{S_{n}}
$P(S_{n})$ is  $\{P_{5}, \overline{P_{5}}\}$-free if and only if $n \leq 5$.
\end{theorem}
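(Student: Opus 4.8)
The plan is to prove the two directions separately. For the \emph{necessity} of $n\le 5$, I would show that $P(S_n)$ fails to be $\{P_5,\overline{P_5}\}$-free as soon as $n\ge 6$. Since the power graph of a subgroup is an induced subgraph of $P(S_n)$, it suffices to exhibit one subgroup whose power graph already contains $P_5$ or $\overline{P_5}$. The natural candidate is $S_3\times S_3$, which embeds in $S_6$ (hence in every $S_n$ with $n\ge 6$) via the action on two disjoint $3$-element sets. The group $S_3\times S_3$ matches none of the forms a), b), c) of Theorem \ref{th_p5_direct_product} (neither factor is cyclic, let alone of prime-power order), so $P(S_3\times S_3)$ is not $\{P_5,\overline{P_5}\}$-free; this settles $n\ge 6$. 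If one prefers an explicit witness, the five elements $((12),e)$, $((12),(456))$, $(e,(456))$, $((13),(456))$, $((13),e)$ of $S_3\times S_3$ are readily checked to induce a $P_5$.

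For the \emph{sufficiency} I would first reduce to the proper power graph. The identity is adjacent to every other vertex of $P(G)$, so it is a universal vertex; as neither $P_5$ nor $\overline{P_5}$ has a universal vertex, any induced copy of $P_5$ or $\overline{P_5}$ must avoid the identity and hence lie inside $P^{*}(S_n)$. So it is enough to check that $P^{*}(S_n)$ is $\{P_5,\overline{P_5}\}$-free for $n\le 5$. The key structural fact I would establish is that, for $n\le 5$, every non-identity element of $S_n$ lies in a \emph{unique} maximal cyclic subgroup, i.e.\ distinct maximal cyclic subgroups meet only in the identity. Granting this, since $u\sim v$ in $P^{*}(S_n)$ forces $u\in\langle v\rangle$ and hence both $u,v$ into the unique maximal cyclic subgroup containing $v$, each connected component of $P^{*}(S_n)$ is exactly $P^{*}(C)$ for a maximal cyclic subgroup $C$ (and $P^{*}(C)$ is connected because any generator of $C$ dominates it).

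It then remains to enumerate the maximal cyclic subgroups of $S_n$ for $n\le 5$ and observe that each $P^{*}(C)$ is too small or too dense to be $P_5$ or $\overline{P_5}$. For $n\le 4$ this is immediate: the maximal cyclic subgroups have order at most $4$, so $P^{*}(S_n)$ is a disjoint union of cliques of size $\le 3$. The genuine work, and the main obstacle, is $S_5$, where the element orders are $1,2,3,4,5,6$. I would sort the maximal cyclic subgroups into three families: the $C_5$'s generated by $5$-cycles (giving components $P^{*}(C_5)=K_4$); the $C_4$'s generated by $4$-cycles, each absorbing its square, a double transposition (giving $P^{*}(C_4)=K_3$); and the $C_6$'s generated by the order-$6$ elements $(abc)(de)$, each absorbing a $3$-cycle pair and a single transposition (giving the $5$-vertex graph $P^{*}(C_6)$). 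The delicate point is precisely that the order-$6$ elements glue $3$-cycles and transpositions together, so one must verify that no element of order $2$ or $3$ is shared between two distinct $C_6$'s, or between a $C_6$ and a $C_4$: a single transposition $(de)$ lies only in the $C_6$ built on the complementary $3$-set, a $3$-cycle lies only in the $C_6$ built on the complementary transposition, and double transpositions occur solely as squares of $4$-cycles. This yields the trivial-intersection claim for $S_5$, whence $P^{*}(S_5)$ is the disjoint union of components $K_4$, $K_3$, and $P^{*}(C_6)$. Finally $P^{*}(C_6)$ is $K_5$ with two adjacent edges deleted (its two order-$6$ generators remain universal among the five vertices), so it has eight edges and is neither $P_5$ (four edges) nor $\overline{P_5}$ (six edges, no universal vertex); since every component has at most five vertices and a connected $5$-vertex induced subgraph must fill an entire component, $P^{*}(S_5)$ contains neither $P_5$ nor $\overline{P_5}$. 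I expect the bookkeeping establishing the unique-maximal-cyclic-subgroup property for $S_5$ — equivalently, the absence of elements of order $10,12,15$ and the disjointness of the $C_6$ family — to be the crux of the whole argument.
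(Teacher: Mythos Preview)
Your proposal is correct and follows essentially the same strategy as the paper: for $n\ge 6$ both arguments exhibit an explicit induced $P_5$ inside $S_6$ (the paper uses $(5\,6)\sim(1\,2\,3)(5\,6)\sim(1\,2\,3)\sim(1\,2\,3)(4\,5)\sim(4\,5)$, you use $(12)\sim(12)(456)\sim(456)\sim(13)(456)\sim(13)$, and your appeal to Theorem~\ref{th_p5_direct_product} via $S_3\times S_3$ is a legitimate alternative since that theorem is proved independently); for $n\le 5$ both arguments rest on the fact that the maximal cyclic subgroups of $S_n$ intersect trivially, so that $P^{*}(S_n)$ decomposes into the components $P^{*}(C_m)$ with $m\in\{2,3,4,5,6\}$. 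The only real difference is the endgame: the paper observes once and for all that none of these $P^{*}(C_m)$ contains an induced $P_4$, and since $\overline{P_5}$ itself contains an induced $P_4$, this single observation kills both $P_5$ and $\overline{P_5}$ simultaneously; you instead handle $P^{*}(C_6)$ by an edge count. Both are fine, but the $P_4$ shortcut is slightly cleaner and spares you the separate treatment of $\overline{P_5}$.
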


\begin{proof}
If $n \geq 6$ then $P(S_{n})$ contains a path $(5~6)\sim (1~2 ~3)(5~6) \sim (1~2~3)\sim (1~2~3)(4~5)\sim (4~5)\sim (4~5)(1~2~6)$. So, $n \leq 5$.\\
If $n \leq 5$ then the maximal cyclic subgroups of  $P(S_{n})$ intersect in the identity, and their orders are in the set $\{2\}$ (for $n=2$), $\{2, 3\}$ (for $n=3$), $\{2, 3, 4\}$ (for $n=4$), or $\{4, 5, 6\}$ (for $n=5$). As for each $n\leq 5$, the power graph of the maximal cyclic subgroups of $S_{n}$ does not contain any path of length $3$ and above so $P(S_{n})$ is $P_{5}$-free and $\overline{P_{5}}$-free (since, $\overline{P_{5}}$ contains induced $P_{4}$).
\end{proof}

\subsection{Simple Groups of Lie type and Sporadic smple group}
\begin{theorem}
If $G$ is a sporadic simple group then $P(G)$ is never $\{P_{5}, \overline{P_{5}}\}$-free.
\end{theorem}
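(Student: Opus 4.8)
The plan is to show directly that for every sporadic simple group $G$ the graph $P(G)$ contains an induced $P_5$; this suffices, since any graph carrying an induced $P_5$ fails to be $\{P_5,\overline{P_5}\}$-free. Two facts are used throughout: the power graph of a subgroup is an induced subgraph of $P(G)$, and Theorem \ref{th_p5_1}, which for a cyclic group says that $P(C_n)$ is $\{P_5,\overline{P_5}\}$-free precisely when $n$ is a prime power or $n=p^{a}q$ for distinct primes $p,q$. Consequently, if $G$ contains an element whose order is divisible by three distinct primes, or an element of order $p^{a}q^{b}$ with $a,b\ge 2$, the cyclic subgroup it generates is already not $\{P_5,\overline{P_5}\}$-free and we are done.

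The main engine is a centralizer criterion. Suppose some element $t$ of prime order $\ell$ has $|C_G(t)|$ divisible by two further distinct primes $p_1,p_2$. By Cauchy's theorem $C_G(t)$ contains elements $a,b$ of orders $p_1,p_2$; since both commute with $t$, the products $ta,tb$ have orders $\ell p_1,\ell p_2$. One then checks that
\[
a \sim ta \sim t \sim tb \sim b
\]
is an induced $P_5$: the four displayed pairs are edges because $a,t\in\langle ta\rangle$ and $b,t\in\langle tb\rangle$, while every other pair is a non-edge because the orders involved are pairwise non-dividing (for instance $p_1\nmid p_2$ and $\ell p_1\nmid \ell p_2$). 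This criterion already subsumes the three-primes case above, taking $t$ to be a suitable prime-power part. A pass through the centralizer orders recorded in the Atlas shows that the hypothesis holds for every sporadic group with the sole exceptions of $M_{11}$ and $M_{22}$, whose prime graphs consist of the single edge $\{2,3\}$ together with isolated vertices, so that every element of prime order has a centralizer that is a $\{2,3\}$-group or of prime order.

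These two groups are the delicate point, and I expect them to be the main obstacle, since neither the cyclic criterion nor the centralizer criterion applies. For them I would use a \emph{double-fork} construction. Both $M_{11}$ and $M_{22}$ have a single class of involutions and contain elements of order $4$, so every involution is the square of an order-$4$ element; fix such a $g$ with $g^{2}=t_1$. A short count identifies the involutions commuting with a fixed element $s$ of order $3$ with the order-$6$ elements whose $3$-part is $s$ (the involution $t\mapsto ts^{2}$ is the bijection), their number being $|C_G(3A)|/|C_G(6A)|>1$ in both groups; hence $s$ commutes with two distinct involutions $t_1\ne t_2$. Putting $w=t_1s^{2}$ and $x=t_2s^{2}$ (so $w^{3}=t_1,\ x^{3}=t_2,\ w^{2}=x^{2}=s$), one verifies that
\[
g \sim t_1 \sim w \sim s \sim x
\]
is an induced $P_5$: the crucial non-edges $t_1\nsim x$ and $w\nsim x$ hold exactly because $\langle x\rangle$ has involution $t_2\ne t_1$, while $g\nsim w,s,x$ and $t_1\nsim s$ follow from incompatible orders.

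In summary, the numerical verifications inside each $P_5$ are routine; the content lies in the structural bookkeeping. The hard part is twofold: confirming from the Atlas that the centralizer criterion indeed covers all sporadic groups except $M_{11}$ and $M_{22}$, and then carrying out the explicit $2$-local and $3$-local computation for those two (single involution class, existence of order-$4$ elements, and the centralizer-order count producing two commuting involutions) that settles them.
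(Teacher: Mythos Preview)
Your proof is correct and takes a different organizational route from the paper. The paper first exhibits an explicit $P_5$ in $P(M_{11})$ on elements of orders $4,2,6,3,6$ (this is exactly your double-fork), then invokes the fact that $M_{11}$ sits as a subgroup inside every sporadic group except $J_1,J_2,J_3,M_{22},He,Ru,Th$; for those seven it locates auxiliary subgroups ($A_7$ in $M_{22}$; $D_3\times D_5$, $A_4\times A_5$, $C_3\times A_6$ in $J_1,J_2,J_3$; $S_7$, $A_8$ in $He,Ru$) and appeals to Theorems~\ref{S_{n}}, \ref{A_{n}}, \ref{th_p5_direct_product}, while for $Th$ it writes down an explicit $P_5$ on elements of orders $3,6,2,10,5$. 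Your centralizer criterion is precisely the mechanism behind that $Th$ path, and you promote it to a uniform tool that handles $24$ of the $26$ groups in one stroke, at the price of an Atlas sweep of centralizer orders rather than a handful of subgroup-containment facts; the remaining two groups then fall to the double-fork, which coincides with the paper's $M_{11}$ construction. Each approach trades one kind of Atlas bookkeeping for another: yours is more self-contained and avoids the earlier classification theorems entirely, while the paper's confines the Atlas work to seven exceptional groups but leans on prior results about $A_n$, $S_n$, and direct products.
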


\begin{proof}
Firstly consider the Mathieu group $M_{11}$. It contains $165$ elements of order $2$, $440$ elements of order $3$, $1320$ elements of order $6$ and $990$ elements of order $4$. So, there exist elements $a, b, c, d, e$ of orders $4, 2, 6, 3, 6$ resp. such that $a^{2}=b=c^{3}, c^{2}=d=e^{2}$ with $c^{3}\neq e^{3}$. Thus, $P(M_{11})$ contains $P_{5}$.\\
Since, $M_{11}$ is contained as a subgroup in every sporadic group except $J_{1}, J_{2}, J_{3}, M_{22}, He, Ru$ and $Th$ so their power graph contains $P_{5}$. \\
But $M_{22}$ contains $A_{7}$, $J_{1}, J_{2}, J_{3}$ contain $D_{3}\times D_{5}, A_{4}\times A_{5}, C_{3}\times A_{6}$ respectively, $He, Ru$ contain $S_{7}, A_{8}$ respectively. By Theorems \ref{S_{n}}, \ref{A_{n}}, \ref{th_p5_direct_product} the power graphs of these subgroups contains either $P_{5}$ or its complement. Hence, the power graphs of these groups are not $\{P_{5}, \overline{P_{5}}\}$-free.
From the information in $\mathbb{ATLAS}$ \cite{Conway}, we observe that $Th$ contains elements $x, y, z, w, u$ whose orders are $3, 6, 2, 10, 5$ respectively. Additionally these elements satify the conditions $y^{2}=x, y^{3}=z=w^{5}$ and $u=w^{2}$. Thus, $P(Th)$ contains a $5$ vertex induced path $x\sim y\sim z\sim w\sim u$. \\
This completes the proof of the theorem.
\end{proof}

\begin{theorem}
Let $G$ be a finite simple group of Lie type except the Ree group ${}^{2}{G_{2}}(q)$ (where, $q=3^{2e+1}$). Then $P(G)$ is $\{P_{5}, \overline{P_{5}}\}$-free if and only if either of the followings hold:\\
I) $G\cong A_{n}$ with $n \leq 6$;\\
II) $G\cong PSL(2, q)$ such that conditions a) or b) occurs:
a) the numbers $(q\pm 1)/2$ are either a prime power or product of some prime and a prime power if $q$ odd;\\
b) $q\pm 1$ are either a prime or product of some prime and a prime power if $q$ even;\\
III) $G={}^{2}{B_{2}}(q)=Sz(q)$, where $q=2^{2e+1}$ with the numbers $q-1, q\pm\sqrt{2q}+1$ are  either a prime power or product of some prime and a prime power;\\
IV) $G\cong PSL(3, 4)$.
\end{theorem}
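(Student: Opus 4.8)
The plan is to prove both directions by reducing, in every case, to the structure of the maximal cyclic subgroups of $G$ and to the shape of its Gruenberg--Kegel (prime) graph, and then to feed this data into Theorems \ref{th_p5_1} and \ref{th_p5_direct_product}. The key preliminary observation is the reduction lemma already used in Claim~1 of Theorem \ref{th_p5}: if the prime graph of $G$ contains an induced path $p-q-r$ realised by pairwise commuting elements $a,b,c$ of orders $p,q,r$, then $a\sim ab\sim b\sim bc\sim c$ is an induced $P_{5}$. Hence a $\{P_{5},\overline{P_{5}}\}$-free power graph forces the prime graph of $G$ to be a disjoint union of cliques.

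For the sufficiency (the ``if'' direction) I would split the list. For $A_{5}\cong PSL(2,4)$, $A_{6}\cong PSL(2,9)$ and $PSL(3,4)$ I would note that every element has prime-power order, so the prime graph is null; by the remark following Theorem \ref{pre_th_2}, $P(G)$ then has no induced $P_{3}$ at all, and a fortiori no $P_{5}$ and no $\overline{P_{5}}$. For $PSL(2,q)$ and $Sz(q)$ I would invoke the classical fact that these are groups with a nontrivial partition: their non-identity elements split into the maximal cyclic subgroups, of orders $p$ (inside the elementary-abelian, resp. Suzuki, Sylow subgroup), $(q\mp 1)/d$ for $PSL(2,q)$, and $q-1,\ q\pm\sqrt{2q}+1$ for $Sz(q)$, and these pairwise intersect trivially. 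Consequently $P^{*}(G)$ is the disjoint union of the reduced power graphs of these subgroups while the identity is a universal vertex. Since neither $P_{5}$ nor $\overline{P_{5}}$ possesses a universal vertex and both are connected, an induced copy in $P(G)$ would have to lie inside one such subgroup; the arithmetic hypotheses in II and III say exactly that each torus order is a prime power or of the form $p^{a}q$, so by Theorem \ref{th_p5_1} each of these subgraphs is $\{P_{5},\overline{P_{5}}\}$-free, and the $2$-part contributes only cographs.

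For necessity I would first apply the reduction lemma to conclude that the prime graph of $G$ must be a union of cliques, and then invoke the classification of finite simple groups by their prime graph (Williams, Kondrat'ev) to see that the Lie-type groups surviving this restriction are essentially $PSL(2,q)$, $Sz(q)$, the excluded Ree groups ${}^{2}G_{2}(q)$, together with the small examples $A_{5},A_{6},PSL(3,4)$. Within $PSL(2,q)$ and $Sz(q)$ I would then read off II and III: if some torus order is neither a prime power nor of the form $p^{a}q$, the corresponding cyclic subgroup already carries an induced $P_{5}$ or $\overline{P_{5}}$ by the converse content of Theorem \ref{th_p5_1}. The alternating groups are handled by Theorem \ref{A_{n}} together with the containment $A_{7}\leq A_{n}$ for $n\geq 7$, and the larger-rank groups are eliminated by exhibiting a contained witness subgroup---$A_{7}$, $S_{7}$ (via Theorems \ref{A_{n}}, \ref{S_{n}}), or a direct product $C_{p}\times A$ covered by Theorem \ref{th_p5_direct_product}---whose power graph already contains $P_{5}$ or $\overline{P_{5}}$.

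The hard part will be the families of Lie type whose prime graph does happen to be a union of cliques but which are \emph{not} partition groups, such as the small unitary and linear groups $PSU(3,q)$ and $PSL(3,q)$ with $q\neq 4$: there the three-prime lemma yields nothing, and each individual maximal cyclic subgroup may still have admissible order $p^{a}$ or $p^{a}q$. For these I would exploit the failure of the partition property, namely that two distinct maximal cyclic subgroups share a nontrivial element (a common power of order $p$, or a common involution), which forces an induced $4$-cycle and hence an induced $\overline{P_{5}}$. Making this uniform across the infinite families---rather than checking each group in the $\mathbb{ATLAS}$---and determining precisely which $q$ admit such an overlap is the principal technical obstacle; it is also the reason the Ree groups ${}^{2}G_{2}(q)$ are set aside, since for them the analogous torus and overlap conditions become unwieldy.
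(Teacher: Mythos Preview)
Your sufficiency argument is sound and essentially coincides with the paper's: for $PSL(2,q)$ and $Sz(q)$ the paper uses exactly the partition into maximal cyclic subgroups together with Theorem \ref{th_p5_1}, and for the EPPO groups $A_{5},A_{6},PSL(3,4)$ it quotes Theorem \ref{pre_th_2}.

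The necessity argument, however, has two genuine gaps.

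First, your reduction lemma does not prove what you then use. An induced path $p\!-\!q\!-\!r$ in the prime graph only guarantees an element of order $pq$ and an \emph{a priori unrelated} element of order $qr$; it does not produce a single element of order $q$ that commutes with elements of orders $p$ and $r$ simultaneously, and without that the path $a\sim ab\sim b\sim bc\sim c$ need not be available. So the conclusion ``the prime graph of $G$ must be a disjoint union of cliques'' is unjustified, and the appeal to Williams--Kondrat'ev is premature. The paper makes no such global reduction: it treats each Lie family on its own.

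Second, your plan for $PSL(3,q)$ and $PSU(3,q)$---to obtain an induced $C_{4}$ from overlapping maximal cyclic subgroups and ``hence'' an induced $\overline{P_{5}}$---is logically wrong: an induced $C_{4}$ does not force an induced $\overline{P_{5}}$, since the house graph requires a fifth vertex adjacent to exactly two adjacent vertices of the $4$-cycle. The paper's route here is entirely different and fully explicit: for $PSL(3,q)$ with $q$ odd, for $PSL(3,q)$ with $q>4$ a power of $2$, and for $PSU(3,q)$ in both parities, it writes down concrete $3\times 3$ matrices $g,h,k$ (or $g,h,x$) in $SL(3,q)$ or $SU(3,q)$ whose images in the projective group form an induced $P_{5}$. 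The remaining families ($PSp(4,q)$, $G_{2}(q)$, ${}^{2}F_{4}$, ${}^{3}D_{4}$, and all higher rank groups) are then disposed of by exhibiting one of these, or $S_{6}$, $A_{7}$, $A_{8}$, as a subgroup. Those matrix constructions are the real content of the necessity proof, and your proposal supplies no substitute for them.
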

We prove this theorem by proving the following subsequent theorems.
\medskip
\begin{theorem}
\label{A_{n}}
$P(A_{n})$ is $\{P_{5}, \overline{P_{5}}\}$-free if and only if $n \leq 6$.
\end{theorem}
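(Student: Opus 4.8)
The plan is to prove both implications, and the key observation driving the ``if'' direction is that being a cograph (equivalently, $P_4$-free) already suffices: both $P_5$ and $\overline{P_5}$ contain an induced $P_4$ (for $\overline{P_5}$ this is exactly the fact used in Theorem \ref{S_{n}}), so every $P_4$-free graph is automatically $\{P_5,\overline{P_5}\}$-free. Accordingly, for $n\le 6$ I would show that $P(A_n)$ is a cograph, while for $n\ge 7$ I would exhibit an explicit induced $P_5$.

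For the ``if'' direction, first note that for $n\le 6$ every non-identity element of $A_n$ has order in $\{2,3,4,5\}$, each of which is a prime power; hence every cyclic subgroup of $A_n$ has prime-power order, and therefore its power graph is complete (in a cyclic $p$-group the subgroups form a chain, so any two elements are comparable under taking powers). I would then argue that the maximal cyclic subgroups of $A_n$ meet pairwise in the identity: subgroups of coprime prime-power orders meet trivially by Lagrange, two distinct cyclic subgroups of the same prime order meet trivially, and the only remaining case is the family of order-$4$ subgroups in $A_6$. Here the key point is that every involution $(a\,c)(b\,d)$ of $A_6$ is the square of a \emph{unique} cyclic subgroup of order $4$, namely $\langle (a\,b\,c\,d)(e\,f)\rangle$ where $\{e,f\}$ are the two fixed points, so distinct $C_4$'s cannot share their involution (and, in particular, no involution of $A_6$ is a maximal cyclic subgroup). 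Granting this, $P^{*}(A_n)$ is a disjoint union of cliques, and adjoining the universal identity vertex keeps the graph a cograph; hence $P(A_n)$ is $P_4$-free and so $\{P_5,\overline{P_5}\}$-free.

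For the ``only if'' direction I would prove the contrapositive: if $n\ge 7$ then $P(A_n)$ contains an induced $P_5$. Since $A_7\le A_n$ and the power graph of a subgroup is an induced subgraph of $P(A_n)$, it suffices to find an induced $P_5$ inside $P(A_7)$. The construction I have in mind takes a common $3$-cycle as the centre and flanks it by two order-$6$ elements sharing that $3$-cycle but with different involution parts, ending in the two involutions themselves:
\[(4\,5)(6\,7)\ \sim\ (1\,2\,3)(4\,5)(6\,7)\ \sim\ (1\,2\,3)\ \sim\ (1\,2\,3)(4\,6)(5\,7)\ \sim\ (4\,6)(5\,7).\]
All five are even permutations on $\{1,\dots,7\}$; consecutive vertices are adjacent because each involution (resp.\ $3$-cycle) is a power of the neighbouring order-$6$ element, and a direct check shows that the six non-consecutive pairs are non-edges (e.g.\ the two order-$6$ elements are not powers of one another because their involution parts differ, and the two endpoint involutions are distinct elements of order $2$). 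This yields the required induced $P_5$.

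The main obstacle is twofold, and both parts lie at the boundary between the two directions. First, verifying that the maximal cyclic subgroups of $A_6$ intersect trivially is the one place where the disjoint-clique structure is not immediate, precisely because several $C_4$'s could a priori share an order-$2$ element; the uniqueness statement above is exactly what rules this out. Second, the standard $S_n$-style path (as in Theorem \ref{S_{n}}) uses transpositions, which are odd, so the construction must be re-engineered with even building blocks---involutions of type $2+2$ and order-$6$ elements of type $3+2+2$---which is why the explicit path above must be placed on $7$ points and not fewer, matching the threshold $n\le 6$ in the statement.
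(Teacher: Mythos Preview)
Your proposal is correct and follows essentially the same approach as the paper: for $n\ge 7$ you exhibit an explicit induced $P_5$ in $P(A_7)$ (the paper uses a different but equally valid path built from an order-$4$ element, namely $(1\,2\,3\,4)(5\,6)\sim(1\,3)(2\,4)\sim(1\,3)(2\,4)(5\,6\,7)\sim(5\,6\,7)\sim(1\,2)(3\,4)(5\,6\,7)$), and for $n\le 6$ you show $P(A_n)$ is a cograph and hence $\{P_5,\overline{P_5}\}$-free. The only difference is that the paper obtains the cograph conclusion by invoking Theorem~\ref{pre_th_2} (null prime graph $\Rightarrow$ cograph) directly from the EPPO property of $A_n$, whereas you prove the stronger disjoint-cliques structure of $P^*(A_n)$ by hand; your extra verification that each involution of $A_6$ lies in a unique $C_4$ is correct and is precisely what makes that direct argument go through.
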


\begin{proof}
For $n \geq 7$, $P(A_{n})$ contains a path $(1~2~3~4)(5~6)\sim (1~3)(2~4)\sim (1~3)(2~4)(5~6~7)\sim (5~6~7)\sim (1~2)(3~4)(5~6~7)$. Thus, $n \leq 6$.\\
For $n=4, 5, 6$ then prime graph of $A_{n}$ is a null graph; so $P(A_{n})$ must be $P_{5}$-free. Otherwise,  $P(A_{n})$ contains a $4$-vertex induced path which contradicts that $P(A_{n})$ is a cograph (see Theorem \ref{pre_th_2}). Since, $\overline{P_{5}}$ contains induced path $P_{4}$ and $P(A_{n})$ (where $n=4, 5, 6$) is $P_{4}$-free so $P(A_{n})$ is also $\overline{P_{5}}$-free. \\
If $n=3$ then $P(A_{3})$ is a complete graph (as $A_{3}$ is a cyclic group of order $3$). This implies that $P(A_{3})$ is $\{P_{5}, \overline{P_{5}}\}$-free.
\end{proof}

\begin{theorem}
Let $G\cong PSL(2, q)$. Then $P(G)$ is $\{P_{5}, \overline{P_{5}}\}$-free if and only if the followings hold:\\
a) the numbers $(q\pm 1)/2$ are either a prime power or product of some prime and a prime power if $q$ odd;\\
b) $q\pm 1$ are either a prime power or product of some prime and a prime power if $q$ even.
\end{theorem}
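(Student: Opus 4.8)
The plan is to exploit the well-understood cyclic-subgroup structure of $PSL(2,q)$ and reduce everything to the cyclic (hence nilpotent) case already settled in Theorem \ref{th_p5_1}. Write $q=p^{f}$ and set $d=\gcd(2,q-1)$, so $d=2$ when $q$ is odd and $d=1$ when $q$ is even; we may assume $q\ge 4$, since otherwise $PSL(2,q)$ is not simple. By Dickson's classical description of the subgroups of $PSL(2,q)$, every non-identity element lies in exactly one maximal cyclic subgroup, distinct maximal cyclic subgroups intersect only in the identity (the partition of $PSL(2,q)$ by its Sylow $p$-subgroups and its maximal tori), and their orders are $p$ (inside an elementary abelian Sylow $p$-subgroup of exponent $p$), $(q-1)/d$ (a split torus), or $(q+1)/d$ (a non-split torus). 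First I would record the graph-theoretic consequence: since two non-identity elements are adjacent in $P(G)$ only if one is a power of the other, hence only if they generate a common cyclic subgroup, there are no edges between distinct maximal cyclic subgroups once the identity is deleted. Thus $P^{*}(G)$ is the disjoint union $\bigcup_{M}P^{*}(M)$ over the maximal cyclic subgroups $M$, and $P(G)$ is recovered by adjoining the identity as a vertex adjacent to all others. The order-$p$ blocks are complete graphs $K_{p-1}$, while the torus blocks are $P^{*}(C_{(q-1)/d})$ and $P^{*}(C_{(q+1)/d})$.

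The key reduction is a universal-vertex argument. The identity is adjacent to every other vertex of $P(G)$, and neither $P_{5}$ nor $\overline{P_{5}}$ possesses a vertex joined to the remaining four (their maximum degrees are $2$ and $3$, respectively). Therefore no induced copy of $P_{5}$ or $\overline{P_{5}}$ can use the identity, so any such copy lies entirely inside $P^{*}(G)$; being connected, it must in fact lie inside a single block $P^{*}(M)$. I would conclude that $P(G)$ is $\{P_{5},\overline{P_{5}}\}$-free if and only if each block $P^{*}(M)$ is. The complete blocks $K_{p-1}$ are trivially $\{P_{5},\overline{P_{5}}\}$-free, so everything reduces to the two torus blocks $P^{*}(C_{(q-1)/d})$ and $P^{*}(C_{(q+1)/d})$.

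Next I would treat each torus block by the same trick applied to $C_{n}$: the identity of $C_{n}$ is universal in $P(C_{n})$, so $P(C_{n})$ contains an induced $P_{5}$ (resp.\ $\overline{P_{5}}$) if and only if $P^{*}(C_{n})$ does. Hence $P^{*}(C_{n})$ is $\{P_{5},\overline{P_{5}}\}$-free exactly when $P(C_{n})$ is; and since $C_{n}$ is abelian, hence nilpotent, Theorem \ref{th_p5_1} applies and gives that this holds if and only if $n$ is a prime power or $n=p'^{a}q'$ for distinct primes $p',q'$, i.e.\ a prime power or a product of some prime and a prime power. Taking $n=(q-1)/d$ and $n=(q+1)/d$ yields precisely conditions a) and b) of the statement. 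The forward implication is in fact immediate on its own: each torus is a subgroup, so its power graph is an induced subgraph of $P(G)$, and $\{P_{5},\overline{P_{5}}\}$-freeness is hereditary, forcing the same numerical conditions on $(q\pm1)/d$.

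The only genuinely delicate point is the structural input of the first paragraph: pinning down the orders of the maximal cyclic subgroups and, crucially, their trivial pairwise intersection, together with the fact that the Sylow $p$-subgroup has exponent $p$ so that its blocks are harmless cliques. This is classical via Dickson's subgroup theorem, but I would state it carefully and keep explicit track of the small and non-simple values of $q$ that must be excluded (for $q\ge 4$ both tori are nontrivial, of order at least $2$). Once the partition is in hand, the two universal-vertex reductions and a single appeal to Theorem \ref{th_p5_1} complete the proof with no further computation.
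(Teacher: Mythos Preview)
Your proof is correct and follows essentially the same route as the paper: both arguments use the partition of $PSL(2,q)\setminus\{1\}$ into maximal cyclic subgroups of orders $p$, $(q-1)/d$, $(q+1)/d$, observe that any induced $P_{5}$ or $\overline{P_{5}}$ must avoid the identity and hence live inside a single block, and then invoke Theorem~\ref{th_p5_1} on the torus pieces. Your write-up is in fact more careful than the paper's---you make the universal-vertex reduction explicit for both $P_{5}$ and $\overline{P_{5}}$, and you correctly describe the Sylow-$p$ blocks as cliques $K_{p-1}$ rather than ``isolated vertices'' (the paper's phrasing is inaccurate for odd $q$, though harmless for the conclusion).
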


\begin{proof}
Let $q$ be a power of some odd prime. Now, $P^{*}(G)$ is the disjoint union of $P^{*}(C_{(q\pm1)/2})$ along with some isolated vertices. Thus if $P(G)$ is $\{P_{5}, \overline{P_{5}}\}$-free then  $P^{*}(C_{(q\pm1)/2})$ is also $\{P_{5}, \overline{P_{5}}\}$-free. This implies the condition in a) according to the Theorem \ref{th_p5_1}.\\
Suppose $q$ is a power of $2$. Then $P^*(G)$ is the disjoint union of $P^{*}(C_{(q\pm1)})$ along with some isolated vertices. Then by Theorem \ref{th_p5_1} the numbers $q\pm 1$ satisfy the conditions in b).
\end{proof}

\begin{theorem}
Let $G={}^{2}{B_{2}}(q)=Sz(q)$, where $q=2^{2e+1}$. Then  $P(G)$ is $\{P_{5}, \overline{P_{5}}\}$-free if and only if the numbers $q-1, q\pm\sqrt{2q}+1$ are  either a prime power or product of some prime and a prime power.
\end{theorem}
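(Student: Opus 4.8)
The plan is to exploit the classical subgroup partition of the Suzuki group and thereby reduce the question to the proper power graphs of its cyclic maximal tori, to which Theorem \ref{th_p5_1} applies. Recall the structure of $G=Sz(q)$ with $q=2^{2e+1}$: writing $\sqrt{2q}=2^{e+1}$, the group has order $q^{2}(q-1)(q-\sqrt{2q}+1)(q+\sqrt{2q}+1)$, and its non-identity elements are partitioned by the conjugates of a Sylow $2$-subgroup $F$ (of order $q^{2}$) together with the conjugates of three cyclic subgroups $A_{0}\cong C_{q-1}$, $A_{1}\cong C_{q-\sqrt{2q}+1}$ and $A_{2}\cong C_{q+\sqrt{2q}+1}$; these blocks pairwise intersect in the identity. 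Equivalently, every element of $G$ has order either a power of $2$ (lying in a unique conjugate of $F$) or a divisor of one of the pairwise coprime odd numbers $q-1$, $q\pm\sqrt{2q}+1$ (lying in a unique conjugate of the corresponding $A_{i}$). I would record this as the group-theoretic input, since it is the crux of the argument.

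Next I would establish a general reduction. If a finite group $G$ is partitioned by subgroups $\{H_{i}\}$, in the sense that every non-identity element lies in exactly one $H_{i}$, then any edge of $P^{*}(G)$ joins two vertices of the same block: if $v$ is a power of $u$ then $v\in\langle u\rangle\subseteq H_{i}$, where $H_{i}$ is the unique block containing $u$. Hence $P^{*}(G)=\bigsqcup_{i}P^{*}(H_{i})$ is a disjoint union. Moreover, in $P(G)$ the identity is adjacent to every other vertex, while neither $P_{5}$ nor $\overline{P_{5}}$ has a vertex of full degree (the maximum degrees are $2$ and $3$, respectively, but full degree on five vertices is $4$); so every induced copy of $P_{5}$ or $\overline{P_{5}}$ avoids the identity and therefore lies inside $P^{*}(G)$. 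Thus $P(G)$ is $\{P_{5},\overline{P_{5}}\}$-free if and only if $P^{*}(G)$ is. Finally, since both $P_{5}$ and $\overline{P_{5}}$ are connected, an induced copy of either must lie inside a single connected component; consequently $P^{*}(G)$ is $\{P_{5},\overline{P_{5}}\}$-free if and only if each block $P^{*}(H_{i})$ is.

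It then remains to test the four kinds of blocks. The Sylow $2$-block is harmless: $F$ is a $2$-group, so $P(F)$ is a cograph by Theorem \ref{pre_th_1}, and $P^{*}(F)$, being an induced subgraph, is a cograph too, hence $\{P_{5},\overline{P_{5}}\}$-free. For a cyclic block $C_{N}$ with $N\in\{q-1,\,q-\sqrt{2q}+1,\,q+\sqrt{2q}+1\}$, the same dominating-vertex argument shows $P^{*}(C_{N})$ is $\{P_{5},\overline{P_{5}}\}$-free if and only if $P(C_{N})$ is, and by Theorem \ref{th_p5_1} applied to the (abelian, hence nilpotent) group $C_{N}$ this happens exactly when $N$ is a prime power or a product of a prime and a prime power. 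Assembling the three cyclic blocks yields precisely the stated condition on $q-1$ and $q\pm\sqrt{2q}+1$, and the converse follows by reversing the chain of equivalences.

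The main obstacle is the first step: justifying that the Sylow $2$-subgroups and the three families of cyclic tori really form a partition of $Sz(q)$ with pairwise trivial intersection. This rests on Suzuki's original determination of the element orders, conjugacy classes and maximal subgroups of $Sz(q)$ (the Sylow $2$-subgroups are trivial-intersection sets and the tori $A_{0},A_{1},A_{2}$ are self-centralizing cyclic Hall subgroups), together with the elementary arithmetic fact that $q-1$, $q-\sqrt{2q}+1$ and $q+\sqrt{2q}+1$ are pairwise coprime. Once this structural fact is in hand, everything else is the purely graph-theoretic reduction above, which is routine.
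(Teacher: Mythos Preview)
Your proposal is correct and takes essentially the same approach as the paper: reduce to the partition of $Sz(q)$ into its Sylow $2$-subgroups and the three families of cyclic tori, observe that the $2$-part is automatically $\{P_{5},\overline{P_{5}}\}$-free, and apply Theorem~\ref{th_p5_1} to the cyclic blocks of orders $q-1$ and $q\pm\sqrt{2q}+1$. Your write-up is more careful than the paper's (you make explicit the dominating-vertex argument for discarding the identity and the connectedness argument for confining an induced $P_{5}$ or $\overline{P_{5}}$ to a single block, and you handle the $2$-part via the cograph property of $p$-groups rather than the paper's remark that $P(C_{4})$ is complete), but the strategy is identical.
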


\begin{proof}
Here $G$ has $4$ maximal cyclic subgroups of orders $4, q-1,  q\pm\sqrt{2q}+1$. Since these $4$ numbers are pairwise coprime so any edge in $P(G)$ must lie in a maximal cyclic subgroup. Thus if $P(G)$ contains either $P_{5}$ and its complement then it must be contained in a maximal cyclic subgroup. Now the power graph of a cyclic group of order $4$ is a complete graph. Therefore, 
 $P(G)$ is $\{P_{5}, \overline{P_{5}}\}$-free if and only if the numbers $q-1, q\pm\sqrt{2q}+1$ satify the stated condition according to the Theorem \ref{th_p5_1}.
\end{proof}

\begin{theorem}
If $q (\geq 4)$ is a power of $2$ then $P(PSU(3, q))$ is never $\{P_{5}, \overline{P_{5}}\}$-free.
\end{theorem}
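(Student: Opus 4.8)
The plan is to exploit the fact that the power graph of any subgroup is an induced subgraph of $P(PSU(3,q))$, so that it suffices to locate a single subgroup whose power graph already contains $P_5$. The subgroup I would target is a direct product $SL(2,q)\times C_{(q+1)/d}$, where $d=\gcd(3,q+1)$; once this is shown to sit inside $PSU(3,q)$, the conclusion follows quickly from the opening observation in the proof of Theorem \ref{th_p5_direct_product}.

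First I would produce this subgroup explicitly inside $SU(3,q)$. Writing the Hermitian form with Gram matrix $\mathrm{diag}(J_2,1)$, the block group $SU(2,q)\cong SL(2,q)$ acting on the first two coordinates and fixing the third embeds in $SU(3,q)$. It is centralised by the one-parameter torus $T=\{\mathrm{diag}(\lambda,\lambda,\lambda^{-2}):\lambda\in\mu_{q+1}\}\cong C_{q+1}$, since $T$ acts as the scalar $\lambda$ on the first two coordinates. Because $q$ is even, $|\mu_{q+1}|=q+1$ is odd, so the only $\lambda$ with $\lambda^{2}=1$ is $\lambda=1$; this forces $SU(2,q)\cap T=1$, whence $SU(2,q)\times T\cong SL(2,q)\times C_{q+1}$ is an internal direct product in $SU(3,q)$. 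The centre $Z=\{\mathrm{diag}(\omega,\omega,\omega):\omega^{3}=1\}\cong C_{d}$ lies inside $T$ and meets $SU(2,q)$ trivially, so passing to $PSU(3,q)=SU(3,q)/Z$ leaves the $SL(2,q)$ factor intact and collapses $T$ to $C_{(q+1)/d}$. Thus $PSU(3,q)\supseteq SL(2,q)\times C_{(q+1)/d}$.

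Next I would extract the induced $P_5$. The order $|SL(2,q)\times C_{(q+1)/d}|$ is divisible by $2$, by an odd prime $r\mid q-1$ (such $r$ exists since $q-1\ge 3$ is odd), and by a prime $s\mid (q+1)/d$ (such $s$ exists since $(q+1)/d\ge 2$); because $\gcd(q-1,q+1)=1$ for $q$ even, the three primes $2,r,s$ are pairwise distinct. Concretely, choosing an involution $\alpha$ and an element $\gamma$ of order $r$ in $SL(2,q)$, and an element $\beta$ of order $s$ in the central factor $C_{(q+1)/d}$, the five elements $a=(\alpha,1)$, $ab=(\alpha,\beta)$, $b=(1,\beta)$, $bc=(\gamma,\beta)$, $c=(\gamma,1)$ form an induced path $a\sim ab\sim b\sim bc\sim c$: each consecutive pair is comparable because $b=(1,\beta)$ is central and has order coprime to those of $\alpha$ and $\gamma$, while every non-consecutive pair is incomparable because the corresponding element orders $2,2s,s,rs,r$ fail to divide one another, so neither element is a power of the other. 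Equivalently, one may simply invoke the first observation in the proof of Theorem \ref{th_p5_direct_product}, which already manufactures a $P_5$ whenever the order of a direct product has at least three distinct prime divisors.

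The routine verifications — that consecutive vertices are adjacent and non-consecutive ones are not — are exactly the coprime-order computations used repeatedly in Section $4$, so they present no difficulty. The genuine content, and the step I expect to be the main obstacle, is the structural claim that $SL(2,q)\times C_{(q+1)/d}$ embeds in $PSU(3,q)$: one must verify the commuting block decomposition, the trivial intersection $SU(2,q)\cap T=1$ (which relies on $q+1$ being odd), and the correct behaviour of these two factors under the central quotient by $Z\cong C_{\gcd(3,q+1)}$. Everything after that is a direct appeal to the machinery already developed for nilpotent groups and direct products.
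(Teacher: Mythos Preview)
Your argument is correct, and it takes a genuinely different route from the paper's. The paper writes down three explicit matrices $g,h,k\in SU(3,q)$ built from an element $\alpha\in\mu_{q+1}$ of prime order $p>3$, and verifies by hand that $g,\,gh,\,h,\,k,\,k^{p}$ induce a $P_5$ which survives in the quotient $PSU(3,q)$. Because this requires $q+1$ to have a prime factor exceeding $3$, the case $q=8$ (where $q+1=9$) is treated separately by invoking the subgroup $C_{3}\times PSL(2,8)$ and Theorem~\ref{th_p5_direct_product}. Your approach instead embeds the whole subgroup $SL(2,q)\times C_{(q+1)/d}$ in $PSU(3,q)$ via the commuting pair $(SU(2,q),T)$ with trivial intersection (using that $q+1$ is odd), and then appeals once to the three-prime observation from the proof of Theorem~\ref{th_p5_direct_product}. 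This works uniformly for every even $q\ge 4$, so no case distinction is needed; indeed your subgroup specialises at $q=8$ to exactly the $SL(2,8)\times C_{3}$ the paper invokes ad hoc. The trade-off is that your structural embedding requires a short group-theoretic verification up front (the commuting, the trivial intersection, and the behaviour under the central quotient), whereas the paper's proof is a bare-hands matrix computation; once your embedding is in place, however, the remainder is cleaner and more conceptual.
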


\begin{proof}
Let $\beta$ be a generator of the multiplicative group of $GF(q^{2})$. So $o(\beta)=q+1$. Let $p (>3)$ be a prime factor of $q+1$. Set $d=(q+1)/p$. Now $\alpha=\beta^{d(q-1)}$ has order $p$. Then $\overline{\alpha}=\beta^{d(q^{2}-1)}$ and $\alpha\overline{\alpha}=1$ in $GF(q^{2})$. Choose $3$ matrices $g, h, k$ as follows:\\
$$ 
g=\begin{bmatrix}
0 & 1& 0 \\
1& 0 & 0 \\
0& 0& 1
\end{bmatrix},~~h=\begin{bmatrix}
\alpha & 0& 0 \\
0 & \alpha & 0 \\
0& 0& \alpha^{-2}
\end{bmatrix}~~\text{and} ~~k=\begin{bmatrix}
0 & \alpha & 0 \\
1& 0 & 0 \\
0& 0& \alpha^{-1}
\end{bmatrix}
$$
 Then $o(g)=2, o(h)=p, o(k)=o(gh)=o(hk)=2p$ and $k^{2}=h$, $gh=hg$, $hk=kh$. So, $P(SU(3, q))$ contains the induced path $g\sim gh\sim h\sim k\sim k^{p}$. Since $g, h, k\in SU(3, q)\setminus Z$ so choosing $a=gZ, b=hZ, c=kZ$; then an induced path $a\sim ab \sim b \sim c \sim c^{p}$ is contained in $P(PSU(3, q))$. \\
But this argument is not valid when $q=8$. In that case, $P(PSU(3, 8))$ contains a subgroup $C_{3}\times PSL(2, 8)$ whose power graph is not $\{P_{5}, \overline{P_{5}}\}$-free (by Theorem \ref{th_p5_direct_product}).
\end{proof}

\begin{theorem}
If $q$ is a power of an odd prime then $P(PSU(3, q))$ is never $\{P_{5}, \overline{P_{5}}\}$-free.
\end{theorem}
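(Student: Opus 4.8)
The plan is to prove the statement directly: for every power $q$ of an odd prime I will exhibit a subgroup of $PSU(3,q)$ whose power graph already contains an induced $P_{5}$ or $\overline{P_{5}}$. Because the power graph of a subgroup sits inside $P(PSU(3,q))$ as an induced subgraph, this is enough. The argument is organised by the factorisation of $q+1$ and uses three ingredients: the explicit monomial construction of the preceding (even characteristic) theorem, Theorem \ref{th_p5_1} on nilpotent groups, and Theorem \ref{th_p5_direct_product} on direct products.

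First I would settle the generic case, in which $q+1$ has a prime divisor $r>3$. Here I copy the even-characteristic matrices almost verbatim: fix $\alpha\in GF(q^{2})$ of order $r$, put $h=\mathrm{diag}(\alpha,\alpha,\alpha^{-2})$, and take the two transposition-type matrices $g$ and $k$ with $k^{2}=h$. The one change forced by odd characteristic is that a bare transposition has determinant $-1$, so I would place a $-1$ in the last diagonal entry of each of $g$ and $k$; this restores $\det=1$, keeps both matrices unitary (the altered entries still have norm $1$), and leaves their orders equal to $2$ and $2r$. None of the relations $g^{2}=1$, $h^{r}=1$, $k^{2}=h$, $gh=hg$, $o(k)=2r$ is disturbed, so the induced path $g\sim gh\sim h\sim k\sim k^{r}$ survives unchanged and descends to $PSU(3,q)$ through cosets exactly as in the even case, all five vertex-orders lying in $\{2,r,2r\}$ and hence being coprime to $\gcd(3,q+1)$.

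It remains to treat the $q$ for which $q+1=2^{a}3^{b}$ is $\{2,3\}$-smooth. If $3\mid q+1$, so $a,b\ge 1$, I would work inside the diagonal torus $C_{q+1}\times C_{q+1}\le SU(3,q)$: it contains the subgroup $C_{2^{a}}\times C_{2}\times C_{3^{b}}$, which is nilpotent but neither a $p$-group nor cyclic of order $p^{c}r$, so it is already forbidden by Theorem \ref{th_p5_1}; since this subgroup meets the centre (a $3$-group) trivially, its isomorphic image survives in $PSU(3,q)$. Otherwise $q+1=2^{a}$, so $q=2^{a}-1$ and $\gcd(3,q+1)=1$, whence $PSU(3,q)=SU(3,q)$; here I would use the cyclic maximal torus of order $q^{2}-1$, and a short check shows that $q^{2}-1=(q-1)(q+1)$ acquires at least three distinct prime divisors as soon as $q\ge 31$, so Theorem \ref{th_p5_1} applies to that cyclic subgroup.

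The only odd prime powers escaping all three cases are $q=3$ and $q=7$ (the Mersenne primes below $31$). For $U_{3}(3)$ and $U_{3}(7)$ I would finish by a finite computation on element orders and power maps, for instance from the ATLAS, locating a small subgroup whose power graph is excluded by Theorem \ref{th_p5_1} or Theorem \ref{th_p5_direct_product}, or exhibiting the induced $P_{5}$ or $\overline{P_{5}}$ outright. I expect the genuine obstacle to be the generic construction of the second paragraph, where one must verify simultaneously that the chosen matrices really lie in $SU(3,q)$ (they preserve the Hermitian form and have determinant $1$ after the sign corrections), that $g,h,k$ realise the orders $2,r,2r$ with $k^{2}=h$, that the six non-consecutive pairs among $g,gh,h,k,k^{r}$ are pairwise non-adjacent so that the path is genuinely induced, and that none of this collapses on passing to the quotient by the centre $Z(SU(3,q))$ of order $\gcd(3,q+1)$. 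Once this single computation is in place, every remaining case is a routine application of Theorem \ref{th_p5_1} or Theorem \ref{th_p5_direct_product}.
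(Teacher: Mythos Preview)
Your approach is sound and genuinely different from the paper's. The paper works entirely through the cyclic torus of order $(q^{2}-1)/\gcd(q+1,3)$: by Theorem~\ref{th_p5_1} this cyclic subgroup already fails to be $\{P_5,\overline{P_5}\}$-free unless $(q-1)(q+1)/\gcd(q+1,3)$ is a prime power or of the form $2^{k}p'$, and a short parity argument then forces $q\in\{3,5,9\}$; those three values are dispatched individually by exhibiting $4\cdot S_{4}$, $A_{7}$, and $C_{8}\times C_{10}$ respectively inside $PSU(3,q)$.

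You instead cover the generic $q$ (those with $q+1$ admitting a prime divisor $r>3$) by an explicit five-vertex path of monomial matrices, the odd-characteristic analogue of the paper's even-$q$ construction with a sign in the $(3,3)$ entry correcting the determinant, and reserve torus arguments only for the $\{2,3\}$-smooth values of $q+1$. Your residual set is $\{3,7\}$ rather than $\{3,5,9\}$, and you defer both to an ATLAS computation. The paper's route is tidier in that a single cyclic subgroup carries the whole reduction; yours is more constructive, producing the forbidden $P_{5}$ by explicit matrices whenever $q+1$ has a large prime factor, at the cost of having to verify carefully that the modified $g,h,k$ are unitary of the stated orders, that the path is genuinely induced, and that nothing collapses modulo the centre of order $\gcd(3,q+1)$ (all of which does go through, since every order in sight is coprime to $3$). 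The only loose end in your write-up is that the finite checks for $q=3$ and $q=7$ are promised rather than carried out.
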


\begin{proof}
If $q$ is odd, then $PSU(3, q)$ contains a cyclic subgroup of order $(q^{2}-1)/gcd(q+
1, 3)$. Since $q$ is odd, both the numbers $q -1$ and $(q+1)/gcd(q +1, 3)$ are even. Thus
$P(PSU(3, q))$ is $\{P_{5}, \overline{P_{5}}\}$-free  if $(q - 1)(q + 1)/gcd(q + 1, 3)$ is either a power of $2$ or of the
form $2^{k}p'$, where $p'$ is an odd prime.\\
First suppose that both $(q + 1)/gcd(q + 1, 3)$ and $q - 1$ are powers of $2$. As only
one of $q + 1$ and $q - 1$ is divisible by $4$, so the pair $(q - 1, q + 1)$  is either $(2, 4)$ or
$(4, 6)$. Hence $q = 3$ or $5$.\\
Next, suppose that $(q - 1)(q + 1)/gcd(q + 1, 3) = 2^{k}p'$. Then one of $q - 1, (q +
1)/gcd(q + 1, 3)$ is a power of $2$. Without loss of generality, we assume that $q - 1$
is a power of $2$. Now if $q \neq 3, 9$, then $q + 1$ is either of the forms $2p'$ or $6p'$ for
some odd prime $p'$. If $q + 1 = 6p'$, then $PSU(3, q)$ contains a subgroup $C_{2^k}\times C_{2p}$ whose power graph is not $\{P_{5}, \overline{P_{5}}\}$-free (by Theorem \ref{th_p5_1}). Again, if $q + 1 = 2p'$, then $PSU(3, q)$ contains the subgroup $C_{q+1}\times C_{q+1/gcd(q+1,3)}$. By Theorem \ref{th_p5_1}, $P(C_{2p'} \times C_{2p'})$  is not $\{P_{5}, \overline{P_{5}}\}$-free.
So either $q = 3$ or $9$ in this case. \\
But if $q=9$ then  $C_{8} \times C_{10}$ is contained in $PSU(3, q)$. For $q=5$, $PSU(3, 5)$ contains $A_{7}$. The power graph of none of these subgroups are $\{P_{5}, \overline{P_{5}}\}$-free [see Theorem \ref{th_p5_1}]. On the other hand, if $q=3$ then $PSU(3, 3)$ contains $4\cdot S_{4}$. Now, $4\cdot S_{4}$ is given by $⟨a, b, c, d, e|a^4 = d^3 = 1, b^2 =
c^2 = e^2 = a^2, ab = ba, ac = ca, ad = da, eae^{-1} = a^{-1}, cbc^{-1} = a^{2}b, dbd^{-1} = a^{2}bc, ebe^{-1} =
bc, dcd^{-1} = b, ece^{-1} = a^{2}c, ede^{-1} = d^{-1}⟩$. Then $P(PSU(3, 5))$ contains the induced path $a\sim ad\sim d\sim ed\sim e$. Thus, in any of the case $q=3, 5, 9$, the power graph of $PSU(3, q)$ is not $\{P_{5}, \overline{P_{5}}\}$-free.
\end{proof}

\begin{remark}
Let $G$ be the Ree group ${}^{2}{G_{2}}(q) = R_1(q)$, where $q = 3^{2e+1}$. We observe that $C_2 \times PSL(2, q)$ is the centralizer of an involution in the
group $G$. The group $C_2 \times PSL(2, q)$ (see \cite{wilson}) carries the subgroups $C_2 \times C_{(q\pm1)/2}$. Therefore, by
Theorem \ref{th_p5_1}, $P(G)$  is $\{P_{5}, \overline{P_{5}}\}$-free  if $(q \pm1)/2$  is either a power of $2$
or a power of an odd prime or of the form $2p^r$.\\
If both $q \pm 1$ are powers of $2$, then we get a solution of Catalan’s conjecture
which contradicts Mihailescu’s theorem (see \cite[Section 6.11]{Cohn}).\\
Let $q + 1 = 2p_{1}^{r_{1}}$ and $q - 1 = 2p_{2}^{r_2}$, where $r_1$ and $r_2$ are odd primes. Then the
diophantine equation $p_{1}^{r_1}-p_{2}^{r_2}=1$ has a solution. This leads to a contradiction
to Mihailescu’s theorem as both $p_1$ and $p_2$ are odd.\\
Similarly, both $q \pm 1$ can not be of the form $4p^{k}$ as the diophantine equation $2x - 2y = 1$ has no solution.\\
Again, if any one of $q + 1$ or $q -1$ is $4p_{1}^{s}$ and the other one is $2p_{2}^{t}$ then, for $x = p_{1}^{s}$ and
$y = p_{2}^{t}$, the corresponding diophantine equation is either $2x - y = 1$ or $x - 2y = 1$. 
\end{remark}
One can check that the solutions exist for infinitely many values of $x, y$. So in this case, the question arise :\\
\textbf{Problem 1:} Does there exist infinitely many values of $q$ for which $P(G)$ is $\{P_{5}, \overline{P_{5}}\}$-free?

\begin{theorem}
Let $q$ be power of an odd prime. Then $P(PSL(3, q))$ is never $\{P_{5}, \overline{P_{5}}\}$-free.
\end{theorem}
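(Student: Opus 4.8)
The plan is to follow the strategy used throughout this section: locate subgroups of $PSL(3,q)$ whose power graphs are already known to contain $P_5$ or $\overline{P_5}$, and then invoke the fact that the power graph of a subgroup is an induced subgraph of $P(PSL(3,q))$. Write $d=\gcd(3,q-1)$, so that $PSL(3,q)=SL(3,q)/Z$ with $|Z|=d$. Two families of subgroups will do almost all the work: the image $\bar T$ of the split torus $T=\{\mathrm{diag}(a,b,(ab)^{-1}):a,b\in GF(q)^{*}\}\cong C_{q-1}\times C_{q-1}$, and the cyclic subgroup $C_{q^{2}-1}\le SL(3,q)$ obtained from a Singer cycle of $GL(2,q)$ extended by $\det^{-1}$ on the complementary line. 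Since each of these is (a quotient of) an abelian, hence nilpotent, group, Theorem \ref{th_p5_1} tells us exactly when its power graph is $\{P_{5},\overline{P_{5}}\}$-free: only when the group is a $p$-group or is cyclic of order $p^{a}q'$ (I write $q'$ for the ``other'' prime, to avoid clashing with the field size $q$).

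First I would treat the generic case with $\bar T$. As $q$ is odd, $q-1$ is even, so the $2$-part of $\bar T$ is $C_{2^{s}}\times C_{2^{s}}$, where $2^{s}$ is the largest power of $2$ dividing $q-1$; this is non-cyclic, and quotienting by $Z$ (of odd order $d\in\{1,3\}$) does not affect the $2$-part, so $\bar T$ is never cyclic. Moreover $\bar T$ is a $p$-group only if $|\bar T|=(q-1)^{2}/d$ is a power of $2$. When $d=3$ this is impossible, since a perfect square cannot equal $3\cdot 2^{k}$; when $d=1$ it forces $q-1=2^{s}$. Hence, unless $q=2^{s}+1$, the group $\bar T$ is nilpotent but neither a $p$-group nor cyclic of order $p^{a}q'$, so by Theorem \ref{th_p5_1} its power graph, and therefore $P(PSL(3,q))$, contains $P_{5}$ or $\overline{P_{5}}$. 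This reduces everything to $q=2^{s}+1$.

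For $q=2^{s}+1$ we have $d=\gcd(3,2^{s})=1$, so $PSL(3,q)=SL(3,q)$ contains the cyclic subgroup $C_{q^{2}-1}$, and $q^{2}-1=2^{s+1}(2^{s-1}+1)$. By Theorem \ref{th_p5_1}, $P(C_{q^{2}-1})$ is $\{P_{5},\overline{P_{5}}\}$-free only when $q^{2}-1$ is a prime power or of the form $2^{a}p$; since the $2$-part has exponent $s+1\ge 2$, this requires the odd part $2^{s-1}+1$ to be $1$ or a single prime. That forces $s-1$ to be $0$ or a power of $2$ (a Fermat exponent), and checking which of the resulting values $q=2^{s}+1$ are actually prime powers leaves only $q\in\{3,5,9\}$. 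For every other $q=2^{s}+1$, the cyclic group $C_{q^{2}-1}$ already produces an induced $P_{5}$, and we are done.

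It remains to dispose of the three exceptional values, and here I expect the main obstacle. For $q=9$ I would use the subfield embedding $PSL(3,3)\le PSL(3,9)$, reducing this case to $q=3$. For $q=3$ and $q=5$ the torus arguments genuinely fail, since every relevant cyclic subgroup has $\{P_{5},\overline{P_{5}}\}$-free power graph and the split tori are $2$-groups; so one must exhibit an explicit induced $P_{5}$ (or $\overline{P_{5}}$) by naming five elements with prescribed orders and power relations, exactly as was done for $PSU(3,5)$ through the presentation of $4\cdot S_{4}$ and for the sporadic groups using $\mathbb{ATLAS}$ data. Verifying that such a quintuple is genuinely induced, namely that no non-consecutive pair is joined by a power relation, is the delicate computational heart of the argument, and completing the cases $q=3,5$ in this explicit fashion finishes the proof.
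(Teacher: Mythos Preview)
Your route is quite different from the paper's, and it has two genuine holes.

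The paper does not hunt through tori or reduce to small $q$ at all. It writes down a \emph{single} explicit $P_5$ that works uniformly for every odd prime power $q$: in $SL(3,q)$ take
\[
g=\begin{bmatrix}0&-1&0\\1&0&0\\0&0&1\end{bmatrix},\qquad
h=\begin{bmatrix}0&-1&0\\1&1&0\\0&0&1\end{bmatrix},\qquad
x=\begin{bmatrix}0&1&0\\-1&-1&0\\0&0&-1\end{bmatrix}.
\]
Then $o(g)=4$, $o(h)=o(x)=6$, $h^{2}=x^{2}$, $h^{3}=g^{2}$, and $x\sim h^{2}\sim h\sim g^{2}\sim g$ is an induced $P_{5}$ which survives modulo the centre. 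Because the entries lie in $\{0,\pm 1\}$, this one construction handles all odd $q$ simultaneously, including your ``hard'' residual cases $q=3,5$.

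Against this, your proposal is incomplete in two places. First, the sentence ``checking which of the resulting values $q=2^{s}+1$ are actually prime powers leaves only $q\in\{3,5,9\}$'' is not a proof: you need simultaneously that $q=2^{s}+1$ is a prime power and that $2^{s-1}+1$ is prime, and since the existence of further Fermat primes is open you cannot simply enumerate. (The gap can be closed: Mih\u{a}ilescu's theorem forces any non-prime prime power $q=2^{s}+1$ to be $9$; and if $q$ is itself a Fermat prime with $s=2^{k}$ and $k\ge 2$, then $s-1=2^{k}-1$ is odd and $\ge 3$, so $3\mid 2^{s-1}+1>3$ and the odd part of $q^{2}-1$ is composite. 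But this argument has to be supplied.) Second, you explicitly leave $q=3$ and $q=5$ undone, calling them ``the delicate computational heart'' --- yet these are exactly the cases where your torus and Singer-cycle subgroups are $\{P_{5},\overline{P_{5}}\}$-free, so nothing you have written even suggests where the $P_{5}$ should come from. The paper's construction shows that the answer lies not in tori but in the copy of $GL(2,q)$ in the upper-left block, and that once found it dispatches every odd $q$ in one stroke.
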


\begin{proof}
Consider $3$ matrices $g, h, x$ in $SL(3, q)$ as follows:\\
$$
g=\begin{bmatrix}
0 & -1 & 0\\
1 & 0 & 0\\
0 & 0 & 1
\end{bmatrix},~~h=\begin{bmatrix}
0 & -1 & 0\\
1 & 1 & 0\\
0 & 0 & 1
\end{bmatrix}~~\text{and}~~x=\begin{bmatrix}
0 & 1 & 0\\
-1 & -1 & 0\\
0 & 0 & -1
\end{bmatrix}
$$
Here, $h^{2}=x^{2}, h^{3}=g^{2}$ and $o(h)=o(x)=6, o(g)=4$. Then the induced path $x \sim h^{2}\sim h\sim g^{2}\sim g$ is contained in $P(SL(3, q))$. Since, $g, h, x \in SL(3, q)\setminus Z$ so set $a=gZ, b=hZ, c=xZ$ and $P(PSL(3, q))$ contains the path $c\sim b^{2}\sim b \sim a^{2}\sim a$. This completes the proof of the theorem.
\end{proof}

\begin{theorem}
$P(PSL(3, q))$ (where, $q (\geq 4)$ is a power of $2$) is $\{P_{5}, \overline{P_{5}}\}$-free if $q=2, 4$.
\end{theorem}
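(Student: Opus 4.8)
The plan is to reduce the stated sufficiency claim to the cograph criterion that is already available. The key point is that for $q=2$ and $q=4$ the group $PSL(3,q)$ is an EPPO group, i.e.\ every non-identity element has prime-power order, so that its prime graph is a null graph and Theorem~\ref{pre_th_2} applies directly.

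First I would record the element orders of the two relevant groups. For $q=2$ we have $PSL(3,2)\cong PSL(2,7)$, whose non-identity elements have orders $2,3,4,7$; alternatively this case falls directly under the $PSL(2,q)$ theorem proved above, since here $(q\pm 1)/2\in\{3,4\}$ are both prime powers. For $q=4$ the group $PSL(3,4)$ has non-identity element orders $2,3,4,5,7$, which one reads off from the maximal torus orders $(q-1)^2=9$, $q^2-1=15$ and $q^2+q+1=21$ after factoring out the centre $Z$ of order $\gcd(3,q-1)=3$ (for instance the Singer torus $C_{21}$ descends to $C_{7}$ and the torus $C_{15}$ descends to $C_{5}$), together with the $2$-power orders coming from the unipotent elements; equivalently, this is read off from the $\mathbb{ATLAS}$ \cite{Conway}. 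In either case each element order is a prime power, so $PSL(3,q)$ is EPPO and its prime graph is null.

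Now Theorem~\ref{pre_th_2} gives that a group with null prime graph has a power graph that is a cograph, hence $P_4$-free. Since every induced $P_5$ contains an induced $P_4$, and since $\overline{P_5}$ also contains an induced $P_4$ (as already used in the proof of Theorem~\ref{A_{n}}), any $P_4$-free graph is automatically both $P_5$-free and $\overline{P_5}$-free. Applying this to $P(PSL(3,2))$ and $P(PSL(3,4))$ yields the stated conclusion.

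The step I expect to be the main obstacle is the EPPO verification for $PSL(3,4)$, which is genuinely delicate and crucially uses the central quotient: the covering group $SL(3,4)$ does contain elements of composite order, such as $\mathrm{diag}(J_2(\lambda),\lambda)$ with $\lambda$ of order $3$, whose order is $6$ because in characteristic $2$ the unipotent part of the Jordan block $J_2(\lambda)$ has order $2$; yet squaring this element lands in $Z$, so its image in $PSL(3,4)$ has order only $2$. One must therefore check that no element of order $6,10,14,15,21,\dots$ survives in the quotient. This is exactly the phenomenon that breaks down for $q=2^{f}$ with $f\ge 3$: there the small centre no longer absorbs the distinct odd prime factors carried by $q^2-1$ (for $q=8$ one already gets a cyclic torus of order $63=3^2\cdot 7$, genuinely composite), so a non-prime-power element persists, and the complementary necessity direction is then settled by exhibiting an explicit five-vertex induced path by the same explicit-matrix method used in the odd-$q$ and $PSU(3,q)$ cases treated above.
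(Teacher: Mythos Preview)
Your proof of the stated sufficiency is correct and follows exactly the paper's route: observe that $PSL(3,2)$ and $PSL(3,4)$ are EPPO groups (null prime graph), apply Theorem~\ref{pre_th_2} to get a cograph, and conclude $\{P_5,\overline{P_5}\}$-freeness from $P_4$-freeness. You give more detail than the paper on the EPPO verification, but the argument is the same.

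One remark worth making: although the theorem is phrased as a one-way implication, the paper in fact proves the converse as well, spending most of its proof on $q>4$. There it produces an explicit induced $P_5$ in $P(PSL(3,q))$ by taking $\alpha\in GF(q)^\times$ of prime order $p>3$ (whose existence for even powers of $2$ requires Catalan/Mih\u{a}ilescu), setting $h=\mathrm{diag}(\alpha,\alpha,\alpha^{-2})$, $k=\begin{psmallmatrix}0&\alpha&0\\1&0&0\\0&0&\alpha^{-1}\end{psmallmatrix}$ and $x=-k$, and exhibiting the path $k^{p}\sim hk^{p}\sim h\sim x\sim x^{p}$. Your final paragraph anticipates precisely this explicit-matrix method but does not carry it out; for the theorem as literally stated this is not required, but if the intended reading is ``if and only if'' you would need to supply that construction.
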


\begin{proof}
Firstly, let $q$ be a power of $2$ with $q>4$. If $q$ is an odd power of $2$ then $q-1$ is not divisible by $3$; whereas if $q$ is an even power of $2$ then $q-1$ is not a power of $3$ (by the solution of Catalan's conjecture (see \cite[Section 6.11]{Cohn}). In that case $q-1$ must have a large prime divisor. Let $\alpha$ be an element in the multiplicative group of $GF(q)$ such that $o(\alpha)=p>3$. Choose the matrices $h~\text{and}~ k$ as follows:\\
$$
h=\begin{bmatrix}
\alpha & 0 & 0\\
0 & \alpha & 0\\
0 & 0 & \alpha^{-2}
\end{bmatrix}~~\text{and}~~k=\begin{bmatrix}
0 & \alpha & 0\\
1 & 0 & 0\\
0 & 0 & \alpha^{-1}
\end{bmatrix}
$$
Set $x=-k$. Then $k^{2}=h=x^{2}, o(k)=o(x)=2p, o(h)=p$ and $o(hk^{p})=2p$; hence $P(SL(3, q))$ contains a $5$-vertex induced path $k^{p}\sim hk^{p}\sim h \sim x \sim x^{p}$. Since the matrices are not scalar so $P(PSL(3, q))$ carries the induced path $P_{5}$. Now the remaining cases are $q=2, 4$.\\
If $q=2, 4$ then the power graphs of $PSL(3, 2), PSL(3, 4)$ have prime graph which is a null graph so their power graphs are cograph (by Theorem \ref{pre_th_2}) so they are also $\{P_{5}, \overline{P_{5}}\}$-free (as $\overline{P_{5}}, P_{5}$ has $P_{4}$ as an induced subgraph).
\end{proof}

\begin{theorem}
$P(PSp(4, q))$ is never $\{P_{5}, \overline{P_{5}}\}$-free.
\end{theorem}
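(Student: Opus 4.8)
The plan is to exploit the fact, recalled in Section~2, that the power graph of any subgroup of $G$ is an induced subgraph of $P(G)$; it therefore suffices to locate, inside $PSp(4,q)$, a subgroup whose power graph is already known not to be $\{P_{5}, \overline{P_{5}}\}$-free. I would split the argument according to the parity of $q$, working in both cases inside the subgroup $Sp(2,q)\times Sp(2,q)=SL(2,q)\times SL(2,q)$ of $Sp(4,q)$ obtained by writing the underlying space as an orthogonal sum of two hyperbolic planes. The one structural fact to keep in mind is that the centre of $Sp(4,q)$ is $\{\pm I\}$, with $-I$ corresponding to the element $(-I,-I)$ of $SL(2,q)\times SL(2,q)$, so that $PSp(4,q)$ is the quotient by this central involution (which is trivial when $q$ is even).

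For $q$ even I would note that $PSp(4,q)=Sp(4,q)$, the centre being trivial, and that $Sp(4,q)$ contains the subfield subgroup $Sp(4,2)$. Using the exceptional isomorphism $Sp(4,2)\cong S_{6}$ together with Theorem~\ref{S_{n}} (the case $n=6>5$), $P(S_{6})$ is not $\{P_{5}, \overline{P_{5}}\}$-free, and hence neither is $P(PSp(4,q))$. This disposes of every even $q=2^{f}$ at once, including $q=2$, where $PSp(4,q)$ is itself $S_{6}$.

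For $q$ odd the key observation is that the Sylow $2$-subgroup $Q$ of $SL(2,q)$ is generalized quaternion of order at least $8$, hence non-cyclic, while $SL(2,q)$ also contains an element of order $3$ (a semisimple one when $3\mid q^{2}-1$, and a unipotent one when $q$ is a power of $3$). Placing $Q$ in the first factor and a cyclic group $C_{3}$ in the second yields a subgroup $Q\times C_{3}$ of $SL(2,q)\times SL(2,q)$. Since $C_{3}$ contains no involution, $(-I,-I)$ does not lie in $Q\times C_{3}$, so $Q\times C_{3}$ meets $\{\pm I\}$ trivially and therefore embeds isomorphically into $PSp(4,q)$. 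The group $Q\times C_{3}$ is nilpotent, has the two prime divisors $2$ and $3$, and has a non-cyclic Sylow $2$-subgroup; it is thus neither a $p$-group nor cyclic of order $p^{a}q$, so by Theorem~\ref{th_p5_1} its power graph is not $\{P_{5}, \overline{P_{5}}\}$-free, and hence neither is $P(PSp(4,q))$.

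The step I expect to require the most care is the passage from $Sp(4,q)$ to the simple quotient $PSp(4,q)$ when $q$ is odd: a subgroup that is not $\{P_{5}, \overline{P_{5}}\}$-free upstairs may become harmless after collapsing $\{\pm I\}$ (for instance, a split torus $C_{q-1}\times C_{q-1}$ can degenerate unpredictably modulo $-I$, its odd part surviving but its $2$-part shrinking). The device that sidesteps this difficulty is precisely the choice of $Q\times C_{3}$, whose order-$3$ factor contains no involution, so that the subgroup intersects the centre trivially and its failure of $\{P_{5}, \overline{P_{5}}\}$-freeness transports unchanged to $PSp(4,q)$. Verifying this trivial intersection, together with the standard identification of the Sylow $2$-subgroup of $SL(2,q)$ as generalized quaternion, is the only genuinely delicate point; everything else reduces to the two cited theorems.
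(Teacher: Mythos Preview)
Your proof is correct and takes a cleaner route than the paper's. The paper first uses tori to whittle down the possibilities: for even $q$ it works inside $PSL(2,q)\times PSL(2,q)$ with the cyclic subgroups $C_{q\pm 1}\times C_{q\pm 1}$, and for odd $q$ inside the central product $SL(2,q)\circ SL(2,q)$ with $C_{q\pm 1}\times C_{(q\pm 1)/2}$, in each case applying Theorem~\ref{th_p5_1} to force $q\in\{2,3,4,5,8\}$; these residual values are then killed one by one by locating ad~hoc subgroups ($S_{6}$ for $q=2,3,4$; $S_{3}\times S_{5}$ for $q=5$; a subfield copy of $PSp(4,2)$ for $q=8$). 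You bypass the reduction entirely. For even $q$ you invoke the subfield subgroup $Sp(4,2)\cong S_{6}$ once and for all (legitimate since $GF(2)\subseteq GF(2^{f})$ for every $f$), and for odd $q$ you exhibit a single uniform witness $Q\times C_{3}$, with $Q$ the generalized-quaternion Sylow $2$-subgroup of $SL(2,q)$, which misses the central involution $(-I,-I)$ because its second coordinate has odd order and hence embeds intact in $PSp(4,q)$; being nilpotent with a non-cyclic Sylow $2$-subgroup, it falls outside the list in Theorem~\ref{th_p5_1}. Your argument is shorter and more conceptual for this particular group; the paper's approach has the compensating virtue of following the same reduce-then-check template used throughout the section on Lie-type groups, so its uniformity pays off across the whole classification rather than in any single case.
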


\begin{proof}
 First, suppose that $q$ is a power of $2$. Then $PSp(4, q)$ contains $PSL(2, q)\times PSL(2, q)$, and so it contains $C_{(q\pm1)} \times C_{(q\pm1)}$. Now $(q +1, q -1) = 1$ and $3$ divides one of them. Thus, $P(G)$ is $\{P_{5}, \overline{P_{5}}\}$-free  if and only if one of $q \pm 1$ is a prime and the other is a power of another prime. So we must have $q = 2, 4$ or $8$.
Next, suppose that $q$ is a power of an odd prime. Then $G$ contains the central
product of two copies of $SL(2, q)$, and hence it contains $C_{(q\pm1)} \times C_{(q\pm1)/2}$. Thus if $PSp(4, q)$  is $\{P_{5}, \overline{P_{5}}\}$-free, then both $(q \pm 1)/2$ have to be prime powers (as
$C_{(q\pm1)}\times C_{(q\pm1)/2}$ is contained in $PSp(4, q))$. Now one of $(q -1)/2$ or $(q +1)/2$  is even, so
one of $(q\pm1)/2$ must be a power of $2$. This implies one of $C_{(q\pm1)}$ must be $4$, or else $PSp(4, q)$ contains a subgroup whose power graph is $\{P_{5}, \overline{P_{5}}\}$. Thus the possible values of $q$ are $3$ and $5$.\\
If $q = 2$, then $PSp(4, 2)$  is isomorphic to $S_{6}$. By Theorem \ref{S_{n}}, $P(PSp(4, 2))$ is not $\{P_{5}, \overline{P_{5}}\}$-free.\\
If $q = 3$ or $4$, then $PSp(4, q)$ contains $S_6$, and so its power graph is not $\{P_{5}, \overline{P_{5}}\}$-free. \\
Again, $PSp(4, 5)$ contains the subgroup $S_{3}\times S_{5}$ whose power graph is not $\{P_{5}, \overline{P_{5}}\}$-free.\\
graph. The group $PSp(4, 8)$ contains $PSp(4, 2)$ (see Mitchell Theorem \cite{King}), and so
$P(PSp(4, 8))$ is not $\{P_{5}, \overline{P_{5}}\}$-free. Thus, we get our conclusion.
\end{proof}

\begin{theorem}
The power graph of $G_{2}(q)$  is never $\{P_{5}, \overline{P_{5}}\}$-free.
\end{theorem}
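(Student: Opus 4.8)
The plan is to exhibit an explicit induced $P_5$ (or its complement) inside $P(G_2(q))$ for every prime power $q$, thereby showing the power graph is never $\{P_5, \overline{P_5}\}$-free. The natural strategy, in line with the preceding theorems in this excerpt, is to locate a suitable subgroup of $G_2(q)$ whose power graph already fails to be $\{P_5, \overline{P_5}\}$-free, and then invoke the fact that the power graph of a subgroup is an induced subgraph of $P(G)$. First I would recall the subgroup structure of $G_2(q)$: the maximal subgroups and centralizers of $G_2(q)$ are well documented (via Kleidman's or Cooperstein's classification of maximal subgroups), and crucially $G_2(q)$ contains $SL(3,q)$ and $SU(3,q)$ as subgroups for all $q$, and contains $SL(2,q) \times SL(2,q)$ (or a central product thereof) inside the centralizer of a suitable semisimple involution.

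The cleanest route is to reduce to the cases already settled in this excerpt. For $q$ a power of an odd prime, I would use that $G_2(q)$ contains $SL(3,q)$ (hence a section isomorphic to $PSL(3,q)$, or the relevant matrix configuration directly); by the theorem on $PSL(3,q)$ for $q$ odd proved above, its power graph carries an induced $P_5$, so the same path lifts into $P(G_2(q))$. For $q$ a power of $2$, I would instead use the subgroup $SU(3,q) \leq G_2(q)$ together with the earlier theorem showing $P(PSU(3,q))$ is never $\{P_5, \overline{P_5}\}$-free for $q \geq 4$; alternatively, since $G_2(q)$ contains $SL(2,q) \circ SL(2,q)$, one gets $C_{q\pm 1} \times C_{q\pm 1}$ and can apply Theorem \ref{th_p5_direct_product} exactly as in the $PSp(4,q)$ argument. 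The small cases $q = 2, 3, 4$ would be handled separately: $G_2(2)$ is not simple (its derived subgroup is $PSU(3,3)$, already treated), and for $q=3,4$ one can point to an explicit subgroup such as $SL(3,q)$ or a subgroup of the form $C_a \times C_b$ with $ab$ having two distinct prime divisors and a Sylow subgroup of non-prime exponent, whose power graph fails the condition by Theorem \ref{th_p5_1}.

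Concretely, the most uniform presentation would mirror the $PSL(3,q)$ and $PSU(3,q)$ proofs: embed explicit matrices realizing elements $g, h, k$ with order relations $k^2 = h$, $h^p = $ (an element of order $2$), and commuting/generation relations forcing the path $g \sim gh \sim h \sim k \sim k^p$ to be induced, all living inside a natural $3$-dimensional (unitary or linear) subgroup of $G_2(q)$. Since $G_2(q)$ acts on a $7$-dimensional module and contains the relevant rank-one and rank-two subsystem subgroups, such elements are available for every $q$.

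The main obstacle I anticipate is verifying the \emph{inducedness} of the chosen path rather than merely its existence: one must confirm that no non-consecutive pair among the five chosen elements is joined by a power relation, which amounts to checking that their cyclic orders and mutual power-relations are exactly as claimed (e.g.\ that $g$ and $k^p$ generate no common cyclic subgroup, and that $g \not\sim h$, $gh \not\sim k$, etc.). A secondary difficulty is ensuring the chosen subgroup genuinely embeds in $G_2(q)$ for the relevant $q$ and survives passage to the simple quotient (handling the center correctly, as in the $PSL$/$PSU$ cases above); the exceptional small values of $q$, where these generic subgroups may degenerate or where $G_2(q)$ is not simple, will require individual verification against Theorems \ref{th_p5_1}, \ref{th_p5_direct_product}, \ref{S_{n}} and \ref{A_{n}}.
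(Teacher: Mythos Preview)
Your proposal is correct and takes essentially the same approach as the paper: reduce to the earlier theorems on $PSL(3,q)$ and $PSU(3,q)$ via the containment $SL(3,q),\,SU(3,q)\le G_2(q)$, and handle $q=2$ separately using $G_2(2)'\cong PSU(3,3)$. The only organisational difference is that the paper splits cases by the observation that for every $q$ at least one of $q\not\equiv 1\pmod 3$ (so $SL(3,q)\cong PSL(3,q)$) or $q\not\equiv -1\pmod 3$ (so $SU(3,q)\cong PSU(3,q)$) holds, which leaves only $q=2$ as a residual case, whereas you split by the parity of $q$.
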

\begin{proof}
$G_{2}(q)$ comprises $SL(3, q)$ and $SU(3, q)$ as the subgroups (see \cite{cooperstein, kleidman2}). Now $SL(3, q) \cong PSL(3, q)$  if $q \not \equiv 1(mod~3)$ and $SU(3, q)\cong PSU(3, q)$  if $q \not \equiv -1(mod~3)$. Thus, for any $q$, either $PSL(3, q)$ or $PSU(3, q)$ is contained in $G_{2}(q)$. Now, $PSL(3, q)$ is $\{P_{5}, \overline{P_{5}}\}$-free if $q = 2, 4$, whereas $PSU(3, q)$ is not $\{P_{5}, \overline{P_{5}}\}$-free for every $q \neq 2$. So only remaining case is $q = 2$. The group $G_{2}(2)$ is not simple,
and it contains $PSU(3, 3)$ as a subgroup. Hence, the power graph of $G_{2}(q)$ is not $\{P_{5}, \overline{P_{5}}\}$-free. So we arrive at the conclusion.
\end{proof}
We now consider the other simple groups of Lie type of rank $2$.\\
1) Let $G = PSU(4, q)$. If $q > 2$, then $G$ contains $PSp(4, q)$.
Again, if $q = 2$, then $G$ isomorphic to $PSp(4, 3)$. Hence, $P(PSU(4, q))$ is not $\{P_{5}, \overline{P_{5}}\}$-free.\\
2) Let $G = PSU(5, q)$. If $q = 2$, then $PSU(5, q)$ contains $PSU(4, 2)$ as a subgroup. Again, if $q > 2$, then $PSU(5, q)$ contains $SL(2, q)\times SL(2, q)$  (see \cite{wilson}). Now, by Theorem \ref{th_p5_direct_product}, $P(SL(2, q)\times SL(2, q))$ is not $\{P_{5}, \overline{P_{5}}\}$-free, and so $P(PSU(5, q))$ is not $\{P_{5}, \overline{P_{5}}\}$-free. \\
3) The group ${}^{2}{F_{4}}(2^d)$ contains ${}^{2}{F_{4}}(2)$ for all odd $d$  (see \cite{malle}), and so it contains $PSL(3, 3)$.
Thus, ${}^{2}{F_{4}}(2^d)$ is not $\{P_{5}, \overline{P_{5}}\}$-free.\\
4) The group ${}^{3}{D_{4}}(q)$ contains $G_{2}(q)$ (see \cite{kleidman}); so its power graph is also a not $\{P_{5}, \overline{P_{5}}\}$-free.

\begin{theorem}
Let $G$ be a Lie type simple groups of rank more than $2$. Then $P(G)$ is never $\{P_{5}, \overline{P_{5}}\}$-free.
\end{theorem}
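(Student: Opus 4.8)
The plan is to reduce the rank-$\geq 3$ case entirely to the groups already treated, exploiting the fact that the power graph of any subgroup of $G$ is an \emph{induced} subgraph of $P(G)$. Thus it suffices to exhibit, inside each rank-$\geq 3$ simple group of Lie type, a genuine subgroup whose power graph is known not to be $\{P_{5},\overline{P_{5}}\}$-free. The most convenient universal witness is the direct product $SL(2,q)\times SL(2,q)$: it sits block-diagonally inside $Sp(4,q)$ as $Sp(2,q)\times Sp(2,q)$ (the two blocks intersecting trivially, and $Sp(2,q)=SL(2,q)$), and by the direct product Theorem \ref{th_p5_direct_product} it is never $\{P_{5},\overline{P_{5}}\}$-free for \emph{any} $q$, since neither factor is cyclic, a $p$-group, or of the admissible semidirect form (for $q=2$ this is $S_{3}\times S_{3}$, for $q=3$ it is $SL(2,3)\times SL(2,3)$, and so on). Hence it is enough to locate a copy of $Sp(4,q)$, of $SL(2,q)\times SL(2,q)$, or of one of the other established bad groups $PSp(4,q)$, $PSU(3,q)$ $(q\neq 2)$, $PSL(3,q)$ $(q\neq 2,4)$, $PSU(4,q)$, or $A_{n}$ $(n\geq 7)$ inside $G$.

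I would first dispose of the classical families via root-subsystem and block-matrix embeddings. For $G=PSL(n,q)$ with $n\geq 4$ I use $PSp(4,q)\leq PSL(4,q)$ when $n=4$, and for $n\geq 5$ the block $SL(4,q)\hookrightarrow SL(n,q)$ (identity on the last $n-4$ coordinates) which injects into $PSL(n,q)$ and already contains $Sp(4,q)\supseteq SL(2,q)\times SL(2,q)$. For the remaining classical types $C_{n}=PSp(2n,q)$, $B_{n}=P\Omega(2n+1,q)$, $D_{n}=P\Omega^{\pm}(2n,q)$, the twisted ${}^{2}A_{m-1}=PSU(m,q)$ (with $m\geq 6$) and ${}^{2}D_{n}=P\Omega^{-}(2n,q)$, the rank-$\geq 3$ hypothesis forces either two orthogonal roots (an $A_{1}+A_{1}$ subsystem, giving $SL(2,q)\times SL(2,q)$) or a rank-$2$ classical subsystem of type $C_{2}$ or ${}^{2}A_{3}$, giving a subgroup isomorphic or closely tied to $PSp(4,q)$ or $PSU(4,q)$. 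In each case the subgroup produced has already been shown non-$\{P_{5},\overline{P_{5}}\}$-free, so every classical $G$ of rank $\geq 3$ fails.

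For the exceptional groups I would argue through a chain of containments that terminates in an already-settled case. Citing the standard references on maximal-rank and subsystem subgroups, one has $E_{8}(q)\supseteq E_{7}(q)\supseteq E_{6}(q)\supseteq F_{4}(q)$ and ${}^{2}E_{6}(q)\supseteq F_{4}(q)$, while each of $E_{6},E_{7},E_{8}$ additionally contains a large classical subgroup of type $A_{m}$ or $D_{m}$, and ${}^{2}E_{6}(q)\supseteq {}^{2}A_{5}(q)=PSU(6,q)$. It therefore remains to treat the base case $F_{4}(q)$, whose maximal-rank subsystems (from the extended Dynkin diagram) include $C_{3}$ and $B_{4}$; the $C_{3}$ subgroup contains $Sp(6,q)\supseteq Sp(4,q)\supseteq SL(2,q)\times SL(2,q)$. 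Hence every exceptional group of rank $\geq 3$ contains a subgroup from the bad list.

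The main obstacle I anticipate is purely the passage between the simply connected group $G_{\mathrm{sc}}$ and the simple quotient $G_{\mathrm{sc}}/Z$: two orthogonal root-$SL_{2}$'s generate only a homomorphic image of $SL(2,q)\times SL(2,q)$, and I must ensure that the central identifications do not collapse it to something $\{P_{5},\overline{P_{5}}\}$-free. I would control this by always choosing the witness so that its intersection with $Z(G_{\mathrm{sc}})$ is trivial (e.g.\ the block $SL(4,q)\hookrightarrow SL(n,q)$ for $n\geq 5$ is injective in $PSL(n,q)$, and $Z\cap Sp(4,q)=\{\pm I\}$ makes $PSp(4,q)\leq PSL(4,q)$), or else verify directly that the central product $SL(2,q)\circ SL(2,q)$ still carries the required induced $P_{5}$. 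The small fields $q=2,3,4$ deserve a separate, routine inspection, since the ``good'' exceptions $PSL(3,2)$ and $PSL(3,4)$ could otherwise masquerade as obstructions; there the product $SL(2,q)\times SL(2,q)\leq Sp(4,q)$ (for instance $S_{3}\times S_{3}\leq Sp(4,2)$) remains a valid bad witness, so the conclusion is uniform across all $q$.
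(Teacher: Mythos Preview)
Your outline is viable, but it takes a genuinely different route from the paper's. The paper's argument is much shorter: since every rank-$\geq 3$ Dynkin diagram contains two nodes joined by a single bond (an $A_{2}$ subdiagram), every such $G$ contains $PSL(3,q)$ as a subgroup; as $P(PSL(3,q))$ is $\{P_{5},\overline{P_{5}}\}$-free only for $q=2,4$, one is immediately reduced to groups over $\mathbb{F}_{2}$ and $\mathbb{F}_{4}$, and those few cases ($PSL(4,q)$, $PSp(6,q)$, and the rank-$3$ orthogonal/unitary groups for $q=2,4$) are dispatched by locating $A_{8}$, $S_{8}$, $PSL(4,2)$, or $PSp(4,q)$ inside them. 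Your plan instead uses $SL(2,q)\times SL(2,q)\leq Sp(4,q)$ as the universal witness, bad for \emph{every} $q$ by Theorem~\ref{th_p5_direct_product}; this buys uniformity in $q$ (no small-field endgame) at the cost of more work locating $C_{2}$ or $A_{1}+A_{1}$ subsystems in each classical and exceptional family, and of controlling the central-product identifications you flag at the end. Both proofs rest on the same principle---exhibit an already-settled bad subgroup---but the paper's $A_{2}$ observation gives a cleaner one-line reduction, while your approach is more self-contained once the direct-product theorem is in hand.
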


\begin{proof}
Let $G$ be a simple group of Lie type of rank greater than $2$. Now, as the Dynkin
diagram of G has a single bond in each case, $G$ contains $PSL(3, q)$ as a subgroup. But the
only values for which the power graph of $PSL(3, q)$  is $\{P_{5}, \overline{P_{5}}\}$-free are $q = 2, 4$. Hence, we have
to check only for those simple groups whose underlying fields are finite fields with $2$ and $4$ elements, respectively.
Now $PSL(4, 2)$ is isomorphic to $A_8$. So its power graph is not $\{P_{5}, \overline{P_{5}}\}$-free. The group $PSL(4, 4)$ contains $PSL(4, 2)$, whose power graph is not $\{P_{5}, \overline{P_{5}}\}$-free.\\
On the other hand, $PSp(6, 2)$ contains $S_8$, whose power graph is not $\{P_{5}, \overline{P_{5}}\}$-free. Again, $PSp(6, 4)$ contains $PSp(6, 2)$. So, in this case, we get both $PSp(6, 2), PSp(6, 4)$ whose power graphs are not  $\{P_{5}, \overline{P_{5}}\}$-free.\\
If $q = 2, 4$, then the orthogonal and unitary groups of Lie type of rank $3$
contain $PSp(4, q)$. Hence their power graphs are not $\{P_{5}, \overline{P_{5}}\}$-free.
\end{proof}

\section{$\{P_{2}\cup P_{3}, \overline{P_{2}\cup P_{3}}\}$-free}

\begin{theorem}
For any finite nilpotent group $G$, $P(G)$ is $\{P_{2}\cup P_{3}, \overline{P_{2}\cup P_{3}}\}$-free if and only if $G$ is either a $p$-group or a cyclic group $C_{p^{a}q}$ (where $a \geq 1$) or $P\times C_{q^b}$, where $P$ is a non-cyclic $2$-group of exponent $2$ and $b \geq 1$.
\end{theorem}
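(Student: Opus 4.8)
The plan is to follow the template of Theorems \ref{th_p5} and \ref{th_p5_1}: first make a reduction to the proper power graph, then split into a forward direction that extracts structural constraints on $G$ and a converse that verifies freeness for each admissible family. The key opening observation is that the identity is adjacent to every other vertex of $P(G)$, i.e. it is a universal vertex, whereas both $P_{2}\cup P_{3}$ and $\overline{P_{2}\cup P_{3}}$ are five-vertex graphs of maximum degree $2$ and $3$ respectively, so neither contains a universal (degree-$4$) vertex. Hence an induced copy of either forbidden graph, if present, cannot use the identity, and $P(G)$ is $\{P_{2}\cup P_{3},\overline{P_{2}\cup P_{3}}\}$-free if and only if $P^{*}(G)$ is. I would work inside $P^{*}(G)$ throughout, and also record two elementary facts I expect to use repeatedly: a graph with no induced $P_{3}$ is a disjoint union of cliques, and $\overline{P_{2}\cup P_{3}}$ itself contains an induced $P_{3}$, so any cluster graph is automatically free of both forbidden subgraphs.

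For the forward direction, assume $P(G)$ is $\{P_{2}\cup P_{3},\overline{P_{2}\cup P_{3}}\}$-free. First, $o(G)$ has at most two prime divisors: if $p,q,r$ are distinct, chosen with $r$ odd (possible since at most one prime is $2$), and $a,b,c$ have orders $p,q,r$, then nilpotency makes these generate pairwise-commuting cyclic factors, so $\{a,ab,b\}$ induces a $P_{3}$ while $\{c,c^{2}\}$ induces a disjoint non-adjacent edge, giving $P_{2}\cup P_{3}$. If $o(G)$ is a prime power then $G$ is a $p$-group (case i). If $o(G)=p^{a}q^{b}$, write $G=P\times Q$ for its Sylow subgroups and analyse the admissible structures as in Theorem \ref{th_p5}: a non-cyclic Sylow factor supplies two non-adjacent elements of equal prime order, which together with an element of the other prime assemble a forbidden configuration unless the non-cyclic factor has exponent $2$ (and is the $2$-part) while the complementary factor is cyclic. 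Tracking these constraints, together with the exponent bounds inherited from the $P_{5}$-analysis, should force $G$ into either the cyclic family $C_{p^{a}q}$ or the family $P\times C_{q^{b}}$ with $P$ elementary abelian of exponent $2$.

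For the converse, my aim is to verify freeness for each admissible family by an order-type computation in $P^{*}(G)$, using that two elements are adjacent precisely when one generates a cyclic subgroup containing the other, which for elements assembled from coprime prime-power parts reduces to divisibility of orders. For cyclic $p$-groups and for elementary abelian $p$-groups the proper power graph is a disjoint union of cliques, so the cluster-graph observation settles these at once; the cyclic groups $C_{p^{a}q}$ are handled exactly as in the proof of Theorem \ref{th_p5_1}, using that the fifth vertex of any candidate $\overline{P_{2}\cup P_{3}}$ is forced into an order that restores a missing adjacency. For the family $P\times C_{q^{b}}$ the plan is to compute the adjacency classes of a coordinate pair $(x,z)$ explicitly and to show that any edge of $P^{*}(G)$ disjoint from a fixed induced $P_{3}$ must share a neighbour with it, so that no disjoint non-adjacent edge (nor the dual configuration in the complement) can be completed.

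I expect the entire difficulty to sit in this converse, and specifically in the two non-cyclic families. Once a factor is divisible by a higher prime power the proper power graph is no longer a cluster graph, so the convenient $P_{3}$-free shortcut is unavailable and the argument must instead rule out, by a careful bookkeeping of the orders of the two coordinates, every way of attaching a disjoint edge to an induced $P_{3}$ (and, dually, every induced copy of $\overline{P_{2}\cup P_{3}}$). This is the step where the precise exponent hypotheses, that $P$ have exponent $2$ and that the companion factor be cyclic, must be invoked in full strength, and I anticipate it is the delicate heart of the proof: the boundary between the admissible and forbidden structures turns on exactly which pairs of orders force an adjacency, so the case analysis on coordinate orders is where I would concentrate the most care.
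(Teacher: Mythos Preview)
There is a genuine gap in your converse, and it is not repairable: you only handle \emph{cyclic} and \emph{elementary abelian} $p$-groups via the cluster-graph observation, but the statement claims freeness for \emph{all} $p$-groups. This is not merely an omission --- the assertion is actually false for general $p$-groups. Take $G=C_{4}\times C_{4}$ (written additively) and the five non-identity elements $(0,1),(0,2),(1,0),(2,0),(1,2)$. One checks directly that $\langle(0,1)\rangle=\{(0,0),(0,1),(0,2),(0,3)\}$, $\langle(1,0)\rangle=\{(0,0),(1,0),(2,0),(3,0)\}$, and $\langle(1,2)\rangle=\{(0,0),(1,2),(2,0),(3,2)\}$; hence the only edges among these five vertices are $\{(0,1),(0,2)\}$, $\{(1,0),(2,0)\}$, and $\{(2,0),(1,2)\}$, which is exactly an induced $P_{2}\cup P_{3}$. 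So your plan to ``fill in'' the non-cluster $p$-groups by a coordinate bookkeeping cannot succeed. The paper's own proof stumbles at precisely the same point: its one-line argument that in a $p$-group ``any $3$ adjacent vertices form a triangle'' is simply false (already in $Q_{8}$ one has $i\sim -1\sim j$ with $i\nsim j$), and the counterexample above shows the claimed converse is wrong, not just under-argued.

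On the parts of the converse that \emph{are} correct, the paper's route is cleaner than your proposed direct computation: it observes that $\overline{P_{2}\cup P_{3}}$ contains an induced $C_{4}$, and then invokes the chordality criterion (Theorem~\ref{pre_th_3}) to dispose of $\overline{P_{2}\cup P_{3}}$ for both $C_{p^{a}q}$ and $P\times C_{q^{b}}$ in one stroke. Your reference to ``the proof of Theorem~\ref{th_p5_1}'' for $C_{p^{a}q}$ targets the wrong forbidden pair and does not transfer as stated; the chordality shortcut is what actually carries this step.
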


\begin{proof}
Let $G$ be a finite nilpotent group such that $P(G)$ is $\{P_{2}\cup P_{3}, \overline{P_{2}\cup P_{3}}\}$-free. \\
 We claim that $o(G)$ must have at most $2$ distinct prime divisors. If possible let $o(G)$ be divisible by $3$ distinct primes say $p<q< r$ with the corresponding elements $a, b, c$. Then the vertices $\{c, c^{-1}\}\cup \{a, ab, b\}$ form $P_{2}\cup P_{3}$.Thus $o(G)$ is either a prime power or of the form $p^{a}q^{b}$(where, $a, b \geq 1$).\\
Let $o(G)=p^{a}q^{b}$ (with $p<q$) and $P, Q$ be the Sylow $p$- and Sylow $q$-subgroups of $G$. Here we consider two cases based on $p$.\\
\textbf{Case 1.} $p$ odd \\
If any one of two Sylow subgroups is non-cyclic, say $P$, then $P(G)$ contains $P_{2}\cup P_{3}$ by the vertices $\{a, a^{-1}\}\cup \{b, bc, c\}$ (where, $o(a)=o(b)=p, o(c)=q, a\nsim b$). Therefore in this case $G\cong C_{p^{a}q^{b}}$. But if both $a, b>1$ then $P(G)$ contains $\overline{P_{2}\cup P_{3}}$. Thus in this case $G\cong C_{p^{a}q}$.\\
\textbf{Case 2.} $p=2$\\
Clearly, the Sylow $q$-subgroup must be cyclic; otherwise we get $P_{2}\cup P_{3}$ in $P(G)$.\\
If the Sylow $2$-subgroup $P$ of $G$ is cyclic then $G\cong C_{2^{a}q^{b}}$, where $a, b \geq 1$. Again, we have either $a$ or $b$ must be $1$; otherwise $P(G)$ contains $\overline{P_{2}\cup P_{3}}$. Thus $G\cong C_{2^{a}q}$. \\
Let the Sylow $2$-subgroup $P$ of $G$ be non-cyclic. If $P$ has two distinct elements of order power of $2$ that are non-adjacent in $P(G)$ then $P(G)$ carries $P_{2}\cup P_{3}$. Thus, either $P$ is cyclic or a non-cyclic $2$-group of exponent $2$.\\
 Therefore $G$ is either a $p$-group or a cyclic group $C_{p^{a}q}$ with $a \geq 1$ or $P\times C_{q^b}$, where $b \geq 1$ and $P$ is a non-cyclic $2$-group of exponent $2$.\\
\underline{Converse Part:}\\
Let $G$ be a $p$-group. Then any $3$ adjacent vertices form a triangle in $P(G)$. Thus $P(G)$ is $\{P_{2}\cup P_{3}, \overline{P_{2}\cup P_{3}}\}$-free. \\
If $G \cong C_{p^{a}q}$ (where $a, b \geq 1$), then as $G$ has unique subgroup of each orders dividing $o(G)$, so it is easy to confirm that $P(G)$ is $\{P_{2}\cup P_{3}\}$-free. On the other hand, if $P(G)$ contains $\overline{P_{2}\cup P_{3}}$ then $P(G)$ has induced $4$-cycle. But since $P(G)$ is chordal by Theorem \ref{pre_th_3}, so such $C_{4}$ never exists. Hence $P(G)$ is  $\{P_{2}\cup P_{3}, \overline{P_{2}\cup P_{3}}\}$-free. \\
Next, suppose $G \cong P\times C_{q^b}$, where $P$ is a non-cyclic $2$-group of exponent $2$ and $b \geq 1$. Then it is easy to check that $P(G)$ is $P_{2}\cup P_{3}$-free. But suppose $P(G)$ contains $\overline{P_{2}\cup P_{3}}$. Then $P(G)$ comprises a $4$-vertex induced cycle, which contradicts the Theorem \ref{pre_th_3} that is $P(G)$ is a chordal graph. Hence $P(G)$ is $\{P_{2}\cup P_{3}, \overline{P_{2}\cup P_{3}}\}$-free in this case.
\end{proof}

\begin{theorem}
Let $G$ be a finite non-nilpotent group. Then $P(G)$ is $\{P_{2}\cup P_{3}, \overline{P_{2}\cup P_{3}}\}$-free if and only if $G$ has one of the following possibilities:\\
(a) $|\pi(G)|\geq 4$ and $G$ is an EPPO group;\\
(b) for $|\pi(G)|=3$, $G$ is either an EPPO group or the group $C_{q^{a}r}\rtimes P$ of order $2^{a}q^{b}r^{c}$, where $2<q<r$ with $P, Q, R$ are the Sylow $2$-, Sylow $q$- and Sylow $r$-subgroups of $G$ respectively and $P$ must be a $2$-group of exponent $2$;\\
(c) if $\pi(G)=\{p, q\}$ then $G$ is either an EPPO group or a group of order $2^{a}q^{b}$ ($q$ odd) such that  the Sylow $q$-subgroup of $G$ must be cyclic as well as normal and the Sylow $2$-subgroups of $G$ must be of exponent $2$.
\end{theorem}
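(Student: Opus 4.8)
The plan is to structure the proof as a biconditional, handling forward and converse directions separately, and within each to split on $|\pi(G)|$, since the statement itself is organized by the number of distinct prime divisors. The key structural tool throughout is that a group is $P_2\cup P_3$-free in its power graph essentially when it cannot simultaneously produce an ``isolated edge'' (a $K_2$ sitting apart from other structure, coming from a maximal cyclic subgroup of prime order) together with a disjoint $P_3$ (which arises from three elements $a\sim ab\sim b$ of mixed prime orders, or from a path inside a single cyclic subgroup). The complement condition $\overline{P_2\cup P_3}$-freeness, by contrast, is governed by forbidding a specific small induced configuration — note $\overline{P_2\cup P_3}$ contains an induced $C_4$ together with one more vertex joined appropriately — so chordality (Theorem \ref{pre_th_3}) and cograph arguments will again be the workhorses, as they were in the nilpotent case.

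For the forward direction I would first argue, as in the earlier theorems, that the prime graph of $G$ controls the picture. **First I would** observe that since $G$ is non-nilpotent, $\pi(G)\ge 2$, and I would dispose of the EPPO case early: if every non-identity element has prime-power order, then by the remark following Theorem \ref{pre_th_2} every edge of $P(G)$ lies inside a single cyclic subgroup of prime-power order, so $P(G)$ is a disjoint union of ``cliques glued at the identity,'' and one checks directly that such a graph is $\{P_2\cup P_3,\overline{P_2\cup P_3}\}$-free regardless of $|\pi(G)|$. This accounts for the EPPO alternative appearing in all three clauses (a), (b), (c). The remaining work is the non-EPPO case, where some element has order divisible by two distinct primes, giving an edge between two primes in the prime graph; I would show this forces $|\pi(G)|\le 3$ and pins down the Sylow structure. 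The crucial sub-claims are: (i) if two distinct primes $p,q$ are joined in the prime graph and a third prime $r\mid o(G)$ exists, then choosing an element of order $r$ (giving a $P_2$ via $\{c,c^{-1}\}$) disjoint from a $P_3$ built from a $pq$-element forces $r=2$ and the Sylow $2$-subgroup to have exponent $2$; and (ii) the Sylow subgroups of the primes appearing in an edge must be cyclic and normal, else one manufactures a $P_2\cup P_3$ from two non-adjacent prime-order elements together with a $P_3$.

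For the converse I would take each listed group type and verify both forbidden subgraphs are absent. **The strategy** is: to rule out $P_2\cup P_3$, show that any isolated edge in $P(G)$ must come from a prime-order maximal cyclic subgroup, and then check that in each listed structure the presence of such an edge forbids a disjoint $P_3$ (because the exponent-$2$ and normality/cyclicity hypotheses force the remaining elements to cluster). To rule out $\overline{P_2\cup P_3}$, I would reduce to showing $P(G)$ has no induced $C_4$ — appealing to the chordality criterion of Theorem \ref{pre_th_3} for the relevant nilpotent subgroups, or directly to the cyclic/EPPO structure — since $\overline{P_2\cup P_3}$ contains $C_4$ as an induced subgraph.

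**The hard part will be** the forward direction in the boundary case $\pi(G)=\{2,q\}$ with $q$ odd (clause (c)) and the three-prime case (clause (b)): there I must extract, purely from the two forbidden induced subgraphs, the fine structural constraints — that the Sylow $q$-subgroup is cyclic and normal while the Sylow $2$-subgroup has exponent $2$, and in the three-prime case that $G$ has the specific semidirect form $C_{q^a r}\rtimes P$ with $2<q<r$. **The delicate point** is ensuring that the non-adjacent pair and the disjoint triangle/path one wants to exhibit genuinely sit in distinct connected pieces (no unexpected edges bridge them via a common power or a shared cyclic subgroup); this requires carefully tracking orders of products $g_1 g_2$ of commuting elements and using that in a non-nilpotent group the Sylow subgroups need not commute. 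I expect the case analysis to be the main source of length, while each individual adjacency check is routine.
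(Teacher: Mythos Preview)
Your proposal is correct and follows essentially the same route as the paper: both split on $|\pi(G)|$, use the pair $\{c,c^{-1}\}$ (for an odd prime $r$) as the source of $P_2$ against a $P_3$ of the form $a\sim ab\sim b$ to force the EPPO conclusion or to push the third prime to be $2$, then extract cyclicity/normality of the odd Sylow subgroups and exponent $2$ for the Sylow $2$-subgroup; and for the converse both reduce $\overline{P_2\cup P_3}$-freeness to the absence of an induced $C_4$ via the chordality criterion of Theorem~\ref{pre_th_3}. The only differences are organizational (you dispose of the EPPO case once up front rather than inside each $|\pi(G)|$ case), not mathematical.
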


\begin{proof}
Let $G$ be a non-nilpotent group such that $P(G)$ is $\{P_{2}\cup P_{3}, \overline{P_{2}\cup P_{3}}\}$-free.\\
If $o(G)$ has $4$ and more divisors then we claim that $G$ must be an EPPO group. Otherwise, $o(G)$ has at least $3$ distinct odd prime divisors say $p, q, r$. Since $G$ is non-EPPO so contains an element whose order is product of two distinct primes. Thus $P(G)$ contains $P_{3}$. Without loss of generality, we suppose that $a\sim b \sim c$ are $3$ vertices of $P_{3}$ with orders $p, pq, q$. There exists an element in $G$ of order $r$, say $c$ and then the pair $\{c, c^{-1}\}$ form $P_{2}$. This implies that $P(G)$ contains $\{P_{2}\cup P_{3}\}$. Thus, in this case $G$ must be an EPPO group.\\
Suppose $o(G)$ has $3$ distinct prime divisors say $p<q<r$.\\
 Here we consider two cases depending on $p$:\\
\textbf{Case 1.} $p$ is odd prime \\
Here we claim that $G$ must be an EPPO group; otherwise $G$ contains an element of order product of two distinct primes. As $p, q, r$ all are odd primes so $P(G)$ has $\{P_{2}\cup P_{3}\}$. \\
\textbf{Case 2.} $p=2$\\
In this case $G$ may be an EPPO group. \\
Now let $G$ be a non-EPPO group. Obviously, $G$ cannot contain any element of order $2q$ or $2r$. In that case if $G$ has an element say $x$ of order $2q$ then $P(G)$ carries the path $x^{q}, x, x^{2}$ along with the $P_{2}$ by the vertices $\{c, c^{-1}\}$, where $o(c)=r$. Thus, $G$ can contains an element of order $qr$ only. Additionally, we observe that the Sylow $q$-, Sylow $r$-subgroup must be cyclic as well as normal and Sylow $2$-subgroup must be of exponent $2$. Elsewhere $\{P_{2}\cup P_{3}\}$ is contained in $P(G)$. Thus $G$ has a normal subgroup $C_{q^{a}r^{b}}$ (as Sylow $q$-and $r$-subgroups are normal so their product is also normal) and all the elements outside of the normal subgroup  must be of order $2$. Moreover, we observe that $P(C_{q^{a}r^{b}})$ carries $\overline{P_{2}\cup P_{3}}$ if both $a, b>1$. So any one of two Sylow $q$-subgroup or Sylow $r$-subgroup must be of prime order. Let $Q\cong C_{q^a}, R\cong C_{r}$. Therefore, $G$ is the group $C_{q^{a}r}\rtimes P$ where $P, Q, R$ are the Sylow $2$-, Sylow $q$- and Sylow $r$-subgroups of $G$ respectively and $P$ must be a $2$-group of exponent $2$. \\
Suppose that $o(G)$ has exactly two distinct prime divisors say $p, q$. If both $p, q$ are odd primes then $G$ must be an EPPO group; elsewhere both the Sylow subgroups of $G$ must be cyclic as well as normal which forces $G$ to be a nilpotent group. This leads to a contradiction as $G$ is not nilpotent. Now consider $o(G)=2^{k}q^{m}$. If $G$ is a non-EPPO group then the Sylow $q$-subgroup of $G$ must be cyclic as well as normal as otherwise $P(G)$ comprises $\{P_{2}\cup P_{3}\}$. Moreover, the Sylow $2$-subgroups of $G$ must be of exponent $2$ since $P(G)$ is $\{P_{2}\cup P_{3}\}$-free. \\
\underline{Converse Part:}\\
(1) Let $G$ be an EPPO group. Then any $3$ consecutive adjacent vertices must belong to the same cyclic subgroup of prime power order. So they form a triangle. Hence, $P(G)$ is $\{P_{2}\cup P_{3}, \overline{P_{2}\cup P_{3}}\}$-free. \\
(2) Let $G\cong C_{q^{a}r}\rtimes P$ along with the prescribed condition in (b). Clearly $P(G)$ never contains $P_{2}\cup P_{3}$. Every elements (non-identity) are of order power of $q$, or $r$, or $2$, or of the form $q^{i}r$. If $P(G)$ contains $\overline{P_{2}\cup P_{3}}$ then an induced $4$-cycle is contained in $P(G)$. So, to prove that $P(G)$ is $\overline{P_{2}\cup P_{3}}$-free it is enough to show that $P(G)$ is induced cycle $C_{4}$-free. Clearly in  all the vertices of this cycle must belong to the cyclic subgroup $C_{q^{a}r}$. As, $P(C_{q^{a}r})$ is a chordal graph by Theorem \ref{pre_th_3} so $P(G)$ is $C_{4}$-free. Hence, $P(G)$ is $\{P_{2}\cup P_{3}, \overline{P_{2}\cup P_{3}}\}$-free.\\
(3) If $G$ is a group of order $2^{a}q^{b}$ ($q$ odd) such that the conditions in (c) hold. One can easily observe that $P(G)$ is $P_{2}\cup P_{3}$-free. Also, using the same argument as done in (2) we can conclude that $P(G)$ is $\overline{P_{2}\cup P_{3}}$-free. Hence, in this case $P(G)$ is $\{P_{2}\cup P_{3}, \overline{P_{2}\cup P_{3}}\}$-free.
\end{proof}

\section{Diamond-free}
The diamond graph is a planar, undirected and simple graph with $4$ vertices and $5$ edges. It consists of a complete graph $K_{4}$ with an edge deletion. It looks like:
\vspace*{3mm}

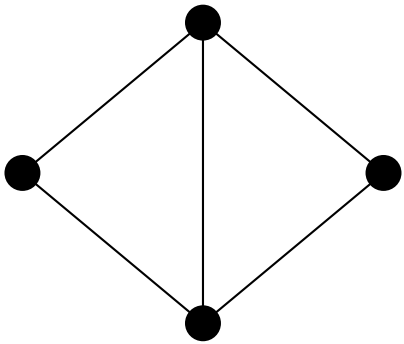
\begin{figure}[ht!]
\begin{center}
\begin{tikzpicture}[scale=0.8,style=thick]
\def\vr{8pt} 
\draw (0,0)  [fill=black] circle (\vr);
\draw (6,0)  [fill=black] circle (\vr);
\draw (3,2.5)  [fill=black] circle (\vr);
\draw (3,-2.5)  [fill=black] circle (\vr);
\draw (0,0)-- (3,2.5);
\draw (0,0)-- (3,-2.5);
\draw (6,0)-- (3,2.5);
\draw (6,0)-- (3,-2.5);
\draw (3,2.5)-- (3,-2.5);
\end{tikzpicture}
\end{center}
\vspace{-0.3cm}
\caption{Diamond graph}
\label{fig: T-k}
\vspace{-0.7cm}
\end{figure}

 \vspace*{8mm}

The complement of a diamond graph is called a co-diamond graph. Here we identify the finite groups having diamond-free as well co-diamond free power graph.

\begin{lemma}
\label{lm_dia}
Let $G$ be a finite group. Then $P(G)$ is a diamond-free graph if and only if $G$ is either a $p$-group or an EPPO group.
\end{lemma}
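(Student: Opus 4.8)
The plan is to reduce diamond-freeness to a condition on the lattice of cyclic subgroups, using the fact that $x\sim y$ in $P(G)$ exactly when $\langle x\rangle\subseteq\langle y\rangle$ or $\langle y\rangle\subseteq\langle x\rangle$. First I would analyse an abstract diamond: call its adjacent pair of degree-$3$ vertices $a,c$ and its non-adjacent degree-$2$ vertices $b,d$. From $b\not\sim d$ the subgroups $\langle b\rangle,\langle d\rangle$ are incomparable, and since each of $a,c$ is joined to both $b$ and $d$, a short four-way case check rules out every configuration except $\langle a\rangle,\langle c\rangle\subseteq\langle b\rangle\cap\langle d\rangle$; a common upper bound is impossible because all subgroups of a cyclic group are totally ordered, and a ``mixed'' bound would force $\langle b\rangle,\langle d\rangle$ to be comparable. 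Conversely, if $A,B$ are incomparable cyclic subgroups with $A\cap B\neq\{e\}$, then choosing $a=e$, $c$ a nonidentity element of $A\cap B$, and $b,d$ generators of $A,B$ exhibits an induced diamond on four distinct vertices. This gives the working criterion I would use throughout: $P(G)$ is diamond-free if and only if any two incomparable cyclic subgroups meet only in the identity, equivalently distinct maximal cyclic subgroups intersect trivially.

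With this criterion the forward implication is quick. If $G$ is not EPPO, some element has order divisible by two distinct primes $p,q$, so $G$ has an element of order $pq$ and hence a subgroup $C_{pq}$; the four vertices consisting of the identity, a generator, and the elements of order $p$ and $q$ induce a diamond, and since the power graph of a subgroup is an induced subgraph of $P(G)$ this shows $P(G)$ is not diamond-free. Thus diamond-freeness forces every nonidentity element to have prime-power order, which (as every $p$-group is already EPPO) yields the stated necessary condition.

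For the converse I would verify the intersection criterion family by family. When every nonidentity element has prime order, all maximal cyclic subgroups have prime order, so two distinct ones meet in $\{e\}$ and $P(G)$ is visibly a union of cliques glued at the identity, hence diamond-free. The step I expect to be the main obstacle is the $p$-group case, precisely because the \emph{direction} of containment in the diamond criterion is what is delicate: the two common neighbours $a,c$ sit \emph{below} $b,d$ in the subgroup lattice, inside $\langle b\rangle\cap\langle d\rangle$, rather than above them. A $p$-group can have two distinct maximal cyclic subgroups sharing a nonidentity socle element (for instance, in $C_2\times C_4$ two distinct cyclic subgroups of order $4$ share the central involution), and such an overlap immediately produces a diamond. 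The real content of the converse is therefore to pin down exactly which $p$-groups have pairwise trivially-intersecting maximal cyclic subgroups; controlling these intersections, rather than the forward direction, is where the sharpest hypotheses and the bulk of the work lie, and I would expect this to be the decisive step on which the precise form of the statement turns.
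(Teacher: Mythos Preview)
Your suspicion is well-founded, and you should state it outright rather than hedge: the lemma as written is false, and $C_2\times C_4$ is a valid counterexample. Take generators $x,y$ of the two distinct cyclic subgroups of order~$4$; their common square $z=x^{2}=y^{2}$ together with the identity $e$ induce a diamond in $P(C_2\times C_4)$, since $e$ and $z$ are each adjacent to both $x$ and $y$ while $x\not\sim y$. The paper's proof of the converse asserts that in a $p$-group ``any $3$ consecutive adjacent vertices belong to the same cyclic $p$-subgroup'', and this is exactly what fails here: $x\sim z\sim y$ is a $2$-path whose endpoints lie in no common cyclic subgroup. So the paper's argument is in error at that step, and you have located it.

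There is, however, a gap in your own reduction. You rule out the ``common upper bound'' configuration by asserting that all subgroups of a cyclic group are totally ordered; this holds only for cyclic groups of prime-power order. In $C_{pq}$ the subgroups of orders $p$ and $q$ are incomparable, and indeed the very diamond you exhibit there for the forward direction is of the upper-bound type: the non-adjacent pair (elements of orders $p$ and $q$) sit \emph{below} the adjacent pair ($e$ and a generator), not above it. Consequently your biconditional criterion ``$P(G)$ is diamond-free iff incomparable cyclic subgroups meet trivially'' already fails for $G=C_{6}$, which satisfies the intersection condition yet contains the diamond $\{e,g,g^{2},g^{3}\}$. Restricted to EPPO groups every cyclic subgroup has prime-power order and your case analysis does go through, so the counterexample to the lemma is unaffected; but the general criterion needs repair before it can serve as the working tool you propose.
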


\begin{proof}
First suppose, $G$ is a finite  group for which $P(G)$ is diamond-free.\\
If possible let, $o(G)$ have $2$ or more distinct prime divisors. Let $p, q$ be two distinct prime divisors of $o(G)$. We claim that $G$ must be an EPPO group. If not, then there exist at least two elements of order $pq$. We choose the consecutive $4$ vertices as $a, b, c, d$ of order $p, pq, q, pq$ with $d=b^{-1}$. Then they form a diamond in $P(G)$. This gives $G$ must be an EPPO group.\\
Therefore, $G$ is either a $p$-group or an EPPO group.\\
\underline{Converse Part:}\\
Let $G$ be a $p$-group. For the sake of contradiction, suppose $P(G)$ contains a diamond with the $4$ consecutive vertices as $a, b, c, d$. Clearly, as any $3$ consecutive adjacent vertices belong to same cyclic $p$-subgroup so they form a triangle in $P(G)$. This implies $a, b, c, d$ must be the complete graph $K_{4}$, which contradicts that $a, b, c, d$ is a diamond. \\
Again, let $G$ be an EPPO group. If possible let $P(G)$ contain a diamond where the $4$ consecutive vertices as $a, b, c, d$. Suppose, $degree(b)=degree(d)=3$ and $degree(a)=degree(c)=2$. Now as $a\sim b \sim c$ and $G$ is EPPO group so $a, b, c$ belong to same cyclic subgroup of prime power order. In that case $a$ must adjacent to $c$. This contradicts the $a, b, c, d$ form a diamond. \\
Thus in any cases $P(G)$ is diamond-free.
\end{proof}

\begin{theorem}
Let $G$ be a finite group. Then $P(G)$ is $\{even-hole, diamond\}$-free if and only if $G$ is either a $p$-group or an EPPO group.
\end{theorem}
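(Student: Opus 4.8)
The plan is to read off the equivalence directly from Lemma~\ref{lm_dia} together with the structural fact recorded in Section~2, that the power graph of an EPPO group has no induced cycle of length greater than $3$. The observation driving the whole argument is that being $\{\text{even-hole},\text{diamond}\}$-free is \emph{a priori} a stronger requirement than being diamond-free, yet Lemma~\ref{lm_dia} already pins $G$ down to exactly the class of $p$-groups and EPPO groups using diamond-freeness alone; so I only need to verify that the extra even-hole condition comes for free on that class.

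For the forward direction I would simply observe that a $\{\text{even-hole},\text{diamond}\}$-free graph is in particular diamond-free, so Lemma~\ref{lm_dia} immediately yields that $G$ is a $p$-group or an EPPO group. No further work is required here.

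For the converse I would take $G$ to be a $p$-group or an EPPO group and note that, since every $p$-group is itself EPPO, $G$ is EPPO in either case. Diamond-freeness of $P(G)$ is then precisely the content of Lemma~\ref{lm_dia}, so the only thing left to check is even-hole-freeness, i.e.\ the absence of an induced cycle $C_{2k}$ with $k\geq 2$. This is exactly the cycle assertion from Section~2: the power graph of an EPPO group carries no induced cycle of length exceeding $3$, and every even hole is such a cycle.

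The step I expect to need the most care---and the one I would spell out were the Section~2 observation not already at hand---is the even-hole claim itself. Here I would first note that the identity is a universal vertex of $P(G)$ and hence cannot lie on any induced cycle of length $\geq 4$; any such cycle therefore lives in the proper power graph, and its connectedness forces all of its vertices to be $p$-elements for one fixed prime $p$. Choosing a vertex $v$ of maximal order on the cycle, each of its two cycle-neighbours is adjacent to $v$ and has order dividing $|v|$, so the cyclic subgroup it generates lies inside $\langle v\rangle$. As $\langle v\rangle$ is cyclic of prime-power order, its elements form a clique in $P(G)$; thus the two neighbours of $v$ are adjacent to each other, contradicting that the cycle is induced of length $\geq 4$. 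This rules out even holes and completes the converse, giving the theorem.
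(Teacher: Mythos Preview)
Your proof is correct and takes essentially the same route as the paper: the forward direction is immediate from Lemma~\ref{lm_dia}, and for the converse both you and the paper combine diamond-freeness from that lemma with chordality of $P(G)$ to rule out even holes. The paper cites Theorem~\ref{pre_th_3} for the chordality step, while you invoke the Section~2 observation on EPPO groups (and supply a self-contained argument); your choice is in fact the cleaner one, since Theorem~\ref{pre_th_3} is stated only for nilpotent groups.
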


\begin{proof}
Firstly, let $G$ be a group with $P(G)$ is $\{even-hole, diamond\}$-free. Since, $P(G)$ is diamond-free so $G$ is either a $p$-group or an EPPO group. \\
\underline{Converse Part:}\\
Let $G$ be either a $p$-group or an EPPO group. Then $P(G)$ is a chordal graph [see Theorem \ref{pre_th_3}]. So, $P(G)$ does not contain any even-hole. Also, by Lemma \ref{lm_dia} $P(G)$ is diamond-free. Hence  $P(G)$ is $\{even-hole, diamond\}$-free.
\end{proof}
In the next result we use the Kulakoff theorem from group theory which states that:
\begin{theorem}[Kulakoff Theorem]
Let $G$ be a p-group of order $p^\alpha$. Then
\begin{itemize}
\item[(a)]
the number of subgroup of prime power order is congruent to $1$ (mod $p$).
\item[(b)]
if $G$ has unique subgroup of order $p^{\beta}$ for all $\beta$ with $1<\beta \leq \alpha$, then $G$ is cyclic or $\beta =1$ and $p=2$, $G$ is the generalized quaternion group $Q_{2^{\alpha}}$.
\end{itemize} 
\end{theorem}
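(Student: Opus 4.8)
The statement is a classical result, so rather than invent a new argument I would recover the two standard proofs and adapt the notation, treating parts (a) and (b) separately.

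For part (a) the plan is to prove the cleaner uniform statement that for each $\beta$ with $0 \le \beta \le \alpha$ the number $s_\beta$ of subgroups of order $p^\beta$ satisfies $s_\beta \equiv 1 \pmod p$, and then read off the claim. First I would dispose of the base case $s_1 \equiv 1 \pmod p$: distinct subgroups of order $p$ meet trivially, a normal subgroup of order $p$ in a $p$-group is central (since $\mathrm{Aut}(C_p)$ has order coprime to $p$), and conjugation partitions the non-normal ones into orbits of size divisible by $p$; hence $s_1$ is congruent to the number of central subgroups of order $p$, which is $(p^r-1)/(p-1) \equiv 1 \pmod p$ where $p^r = |\Omega_1(Z(G))|$ and $r \ge 1$ because the centre is nontrivial. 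For the inductive step I would double count the flags $H < K$ with $|H| = p^{\beta-1}$, $|K| = p^\beta$ and $[K:H] = p$. Summing over $K$ uses that the number of maximal subgroups of the $p$-group $K$ equals $(p^d-1)/(p-1) \equiv 1 \pmod p$; summing over $H$ uses that the index-$p$ overgroups of $H$ correspond to subgroups of order $p$ in the nontrivial $p$-group $N_G(H)/H$ (nontrivial because a proper subgroup of a $p$-group is strictly contained in its normalizer), whose count is $\equiv 1 \pmod p$ by the base case. The two counts then give $s_\beta \equiv s_{\beta-1} \pmod p$, and induction finishes.

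For part (b) the operative hypothesis is that $G$ has a \emph{unique} subgroup of order $p$; I would first note this is inherited by every subgroup, and that an abelian $p$-group with a unique subgroup of order $p$ is cyclic by the structure theorem. The statement then reduces to the classical lemma that a non-cyclic $p$-group contains a copy of $C_p \times C_p$ unless $p = 2$ and $G$ is generalized quaternion: indeed $C_p \times C_p$ has $p+1 > 1$ subgroups of order $p$, so a group with a unique such subgroup cannot contain it, forcing $G$ to be cyclic (for $p$ odd) or cyclic or generalized quaternion (for $p = 2$). To prove the lemma I would induct on $|G|$ using a maximal normal abelian subgroup $A$, which is self-centralizing, i.e. $C_G(A) = A$; if $A$ is non-cyclic we are done, while if $A$ is cyclic then $G/A$ embeds in $\mathrm{Aut}(A)$ and analyzing this embedding pins down the structure.

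The main obstacle will be the $p = 2$ analysis in part (b), where the generalized quaternion groups appear as the genuine exception: here $\mathrm{Aut}(C_{2^k})$ fails to be cyclic, so the embedding of $G/A$ into it admits the extra dihedral, semidihedral and modular possibilities, and one must verify that exactly the quaternion groups survive the unique-involution condition. I would also flag that, read literally with $1 < \beta \le \alpha$, the hypothesis already supplies a unique maximal subgroup at $\beta = \alpha - 1$, which makes the Frattini quotient cyclic of order $p$ and hence $G$ cyclic with no exception at all; the generalized quaternion case is tied specifically to the $\beta = 1$ condition of a unique subgroup of order $p$, which is the reading I would adopt.
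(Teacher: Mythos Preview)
The paper does not prove this theorem at all: it is quoted verbatim as a classical result from group theory (``In the next result we use the Kulakoff theorem from group theory which states that:'') and then applied without argument. So there is no paper proof to compare your proposal against.

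That said, your outline is a correct reconstruction of the standard proofs. For (a), the flag double-count $H<K$ with $[K:H]=p$ is exactly the classical route; the only point to be careful about is that when you invoke the base case for $N_G(H)/H$ you are using it for an arbitrary nontrivial $p$-group, not for $G$ itself, so you should state the base case in that generality (your argument for it already works in that generality, so this is only a matter of phrasing). For (b), reducing to ``unique subgroup of order $p$ implies cyclic or generalized quaternion'' via a maximal normal abelian (hence self-centralizing) subgroup is again the textbook route; your warning that the $p=2$ analysis of the embedding $G/A\hookrightarrow\mathrm{Aut}(A)$ is where the real work lies is accurate.

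Your closing remark about the literal hypothesis is well taken: as written in the paper, the range $1<\beta\le\alpha$ already includes $\beta=\alpha-1$, and a unique maximal subgroup forces the Frattini quotient to have order $p$, whence $G$ is cyclic outright and the quaternion alternative cannot arise. The paper's statement is garbled here; the intended content is the $\beta=1$ case (unique subgroup of order $p$), which is precisely what the paper actually uses in the subsequent theorem. Your reading is the correct one.
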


\begin{theorem}
Let $G$ be a finite group. Then $P(G)$ is $\{diamond, co-diamond\}$-free if and only if $G$ is either  a cyclic group of prime power order or a $2$-group of exponent $2$.
\end{theorem}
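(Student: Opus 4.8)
The plan is to prove both implications, with the converse being routine and the forward direction requiring a reduction followed by a case analysis. For the converse: if $G$ is cyclic of prime power order then $P(G)$ is complete, and a complete graph has no induced subgraph containing a non-edge, so it is simultaneously diamond-free and co-diamond-free; if $G$ is a $2$-group of exponent $2$ then $P(G)$ is the star $K_{1,|G|-1}$, which is triangle-free (hence diamond-free) and co-diamond-free because every non-central vertex is adjacent to the centre, so no edge can be disjoint from two further vertices. For the forward direction I would assume $P(G)$ is \{diamond, co-diamond\}-free; since it is diamond-free, Lemma \ref{lm_dia} already forces $G$ to be a $p$-group or an EPPO group, and the whole task is to sharpen this using co-diamond-freeness. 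Two facts I will use repeatedly are that the co-diamond is $K_2\cup 2K_1$ (an edge together with two vertices isolated from it and from each other), and that the identity, being adjacent to everything, can never appear in an induced co-diamond.

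First I would reduce to $p$-groups by showing that an EPPO group $G$ with at least two prime divisors always contains a co-diamond. Such a $G$ is non-nilpotent, since a nilpotent EPPO group is the direct product of its Sylow subgroups and two nontrivial factors would create an element of composite order. In $P^*(G)$ no edge joins elements of coprime prime-power order, so the non-identity vertices break up prime by prime; I would argue that $P^*(G)$ has at least three connected components (automatic when $|\pi(G)|\ge 3$, and for $|\pi(G)|=2$ obtained from the Frobenius/$2$-Frobenius structure of EPPO groups, where a non-normal Sylow subgroup is a trivial-intersection subgroup whose conjugates give two components and the other prime gives a third). Given three components, I pick an element $y$ of odd prime order (one exists, else $G$ is elementary abelian of exponent $2$), take the edge $\{y,y^2\}$ inside its component, and choose $c,d$ from two other components; then $\{y,y^2,c,d\}$ induces a co-diamond. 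Hence $G$ is a $p$-group.

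It then remains to classify the $p$-groups, for which I would show that a non-cyclic $p$-group (excluding the elementary abelian $2$-group) always has a co-diamond in its power graph. If $p$ is odd, non-cyclic forces a copy of $C_p\times C_p$, whose non-identity vertices form $p+1$ cliques meeting only at the identity; for $p\ge 3$ an edge $\{x,x^2\}$ in one clique together with one vertex from each of two other cliques is a co-diamond, which persists as an induced subgraph of $P(G)$. If $p=2$ and the exponent is at least $4$, I would split into the generalized quaternion case (containing $Q_8$, whose power graph carries the co-diamond on $\{i,-i\}\cup\{j,k\}$) and the remaining case, where $G$ has at least three involutions; there an element $a$ of order $4$ gives the edge $\{a,a^3\}$, and two involutions $c,d\neq a^2$ complete a co-diamond. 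This leaves exactly the cyclic $p$-groups and the $2$-groups of exponent $2$.

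The hard part will be the EPPO step: turning ``non-nilpotent'' into an explicit co-diamond requires controlling the connectivity of $P^*(G)$, and the cleanest justification that it has at least three components runs through the structure theory of EPPO (CP) groups, so some care is needed there. A secondary subtlety is the generalized quaternion $2$-groups, which are non-cyclic yet have a unique involution; they escape the involution-counting argument and must be dispatched by exhibiting the explicit $Q_8$ subgraph instead.
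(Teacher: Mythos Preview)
Your proof is correct, and the $p$-group analysis matches the paper's almost exactly: where you embed $C_p\times C_p$ for odd $p$, the paper invokes Kulakoff's theorem to obtain three subgroups of order $p$, but the resulting co-diamond is the same; your treatment of the $2$-group case via the cyclic/generalized-quaternion/exponent-$2$ trichotomy is identical in substance to the paper's, and your explicit $Q_8$ co-diamond $\{i,-i,j,k\}$ is precisely what the paper has in mind when it says generalized quaternion groups have ``at least $3$ distinct elements of order $4$''. Your converse is also fine (the paper simply calls it obvious).

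The genuine difference is the EPPO step. You route through the Frobenius/$2$-Frobenius structure theory of EPPO groups to argue that $P^*(G)$ has at least three connected components, and you rightly flag this as the delicate part requiring care. The paper bypasses this entirely with a direct element-chasing argument: in an EPPO group with at least two prime divisors, if some Sylow subgroup fails to be both cyclic and normal, then one can exhibit two non-adjacent elements $a,b$ of one prime order together with an edge $\{c,c^{-1}\}$ for $c$ of another odd prime order, giving a co-diamond immediately; and if every Sylow subgroup is cyclic and normal then $G$ is nilpotent, hence cyclic of order $p^aq^b$, contradicting the EPPO hypothesis. This avoids the structure theory altogether and is considerably shorter. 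Your approach is valid, and the component-counting viewpoint is conceptually clean, but it imports heavier machinery (the classification of solvable EPPO groups as Frobenius or $2$-Frobenius, plus the TI property of complements) than the problem actually needs.
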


\begin{proof}
For any finite group $G$, let $P(G)$ be $\{diamond, co-diamond\}$-free. As, $P(G)$ is diamond-free so $G$ is either a $p$-group or an EPPO group. \\
But if $G$ is an EPPO group then $o(G)$ can have exactly two distinct prime divisors. Otherwise, $P(G)$ contains a co-diamond. Additionally, we observe that both the Sylow subgroup of $G$ must be cyclic as well as normal; elsewhere if Sylow $p$-subgroup is either non-cyclic or not normal then $G$ contains $3$ elements say $a, b$ of order $p$ with $a\nsim b$ in $P(G)$ and $c\in G$ of order $q$. Clearly, $a,b, c, c^{-1}$ form a co-diamond. Next, let $p=2$ such that the Sylow $2$-subgroup is $C_{2}$ and normal and Sylow $q$-subgroups are either non-cyclic or not normal. Then $G$ contains two elements $x, y$ of order $q$ with $x \nsim y$ and $a\in P$. So, $a, x, x^{-1}, y$ form a co-diamond in $P(G)$. Therefore, in any cases, $o(G)$ has exactly two distinct prime divisors with all Sylow subgroups are cyclic and normal. This implies $G$ must be the group $C_{p^{r}q^{s}}$, which contradicts that $G$ is an EPPO group. Hence $G$ must be a $p$-group. \\
If $G$ is a $p$-group we claim that $G$ is either a cyclic group of prime power order or a $2$-group of exponent $2$. \\
Let $p$ be an odd prime. If $G$ is non-cyclic then (by Kulakoff theorem) there exist at least $3$ distinct subgroups of order $p$. Thus $P(G)$ contains a co-diamond. Hence in this case $G$ must be a cyclic group of prime power order.\\
Now let $p=2$. If $G$ is cyclic then $P(G)$ is complete and so $\{diamond, co-diamond\}$-free. But if $G$ is non-cyclic then either $G$ is generalized quaternion group or $G$ has at least $3$ distinct minimal subgroups. For the latter case, $G$ does not contain any element of order $4$ and above, because otherwise we get a co-diamond. This gives $G$ must be a $2$-group of exponent $2$.\\
Next consider $G$ as generalized quaternion. Since in this case $G$ has at least $3$ distinct elements of order $4$ so $P(G)$ carries a co-diamond. Therefore, if $G$ is a $p$-group then $G$ is either a cyclic group of prime power order or a $2$-group of exponent $2$.\\
Converse part is obvious.
\end{proof}

\section{Acknowledgement}
The author Pallabi Manna is supported by Department of Atomic Energy (DAE), India and Santanu Mandal acknowledges VIT Bhopal University, India, for providing the infrastructure.

\section{Statements and Declarations} \textbf{Competing Interests:} The authors made no mention of any potential conflicts of interest.

\section{Data Availability}
Data sharing is not applicable to this article as no data were created or analyzed in this study.

\end{document}